\numberwithin{equation}{section}
\title{A Recovery-Based A Posteriori Error Estimator \\
for $\bm{H}(\mathbf{curl})$ Interface Problems
\thanks{This work was supported in 
part by the National Science Foundation
under grants DMS-1217081, DMS-1320608, and DMS-1522707.}}
\date{}
\author{Zhiqiang Cai
\thanks{Department of Mathematics, Purdue University, 150 N. University
Street, West Lafayette, IN 47907-2067, zcai@math.purdue.edu.}
\and Shuhao Cao
\thanks{Department of Mathematics, Pennsylvania State 
University, University Park, State College, PA 16802, scao@psu.edu.}}
\begin{document}

\newcommand{\vect}[1]{\bm{#1}}
\renewcommand{\a}{\alpha}
\newcommand{\G}{\Gamma}
\newcommand{\g}{\gamma}
\renewcommand{\k}{\kappa}
\newcommand{\D}{\Delta}
\renewcommand{\b}{\beta}
\renewcommand{\d}{\delta}
\newcommand{\lam}{\lambda}
\newcommand{\e}{\epsilon}
\newcommand{\vare}{\varepsilon}
\newcommand{\z}{\zeta}
\newcommand{\s}{\sigma}
\newcommand{\Om}{\Omega}
\renewcommand{\S}{\Sigma}
\newcommand{\om}{\omega}
\newcommand{\RR}{\mathbb{R}}
\newcommand{\sP}{\mathscr{P}}
\newcommand{\vC}{\vect{C}}
\newcommand{\vH}{\vect{H}}
\newcommand{\vL}{\vect{L}}
\newcommand{\vP}{\vect{P}}
\newcommand{\vX}{\vect{X}}
\newcommand{\va}{\vect{a}}
\newcommand{\vb}{\vect{b}}
\newcommand{\vc}{\vect{c}}
\newcommand{\ve}{\vect{e}}
\newcommand{\vu}{\vect{u}}
\newcommand{\vv}{\vect{v}}
\newcommand{\vz}{\vect{z}}
\newcommand{\vf}{\vect{f}}
\newcommand{\vg}{\vect{g}}
\newcommand{\vn}{\vect{n}}
\newcommand{\vp}{\vect{p}}
\newcommand{\vt}{\vect{t}}
\newcommand{\vs}{\vect{s}}
\newcommand{\vx}{\vect{x}}
\newcommand{\vw}{\vect{w}}
\newcommand{\bnu}{\vect{\nu}}
\newcommand{\bsig}{\vect{\sigma}}
\newcommand{\btau}{\vect{\tau}}
\newcommand{\bphi}{\vect{\phi}}
\newcommand{\bvphi}{\vect{\varphi}}
\newcommand{\bpsi}{\vect{\psi}}

\newcommand{\cA}{\mathcal{A}}
\newcommand{\cB}{\mathcal{B}}
\newcommand{\cD}{\mathcal{D}}
\newcommand{\cI}{\mathcal{I}}
\newcommand{\cT}{\mathcal{T}}
\newcommand{\cN}{\mathcal{N}}
\newcommand{\cE}{\mathcal{E}}
\newcommand{\cF}{\mathcal{F}}
\newcommand{\cM}{\mathcal{M}}
\newcommand{\cO}{\mathcal{O}}
\newcommand{\cP}{\mathcal{P}}
\newcommand{\cR}{\mathcal{R}}
\newcommand{\cS}{\mathcal{S}}
\newcommand{\fI}{\mathfrak{I}}
\newcommand{\bphit}{\bm{\phi}_{\top}}

\newcommand{\p}{\partial}
\newcommand{\wt}{\widetilde}
\newcommand{\wh}{\widehat}
\newcommand{\ol}{\overline}
\newcommand{\mcup}{\mathsmaller{\,\bigcup}}
\newcommand{\mcap}{\mathsmaller{\,\bigcap}}
\newcommand{\nab}{\nabla}
\newcommand{\curl}{\mathbf{curl}\hspace{0.7pt}}
\renewcommand{\div}{\mathrm{div}}
\newcommand{\conv}{\mathrm{conv}}
\newcommand{\divv}{\nab\!\cdot\!}
\newcommand{\cross}{\!\times\!}
\newcommand{\curlt}{\nab\cross}

\newcommand{\sprod}{\mathop{\mathchoice  
{\textstyle\prod}  {\prod} {\prod} {\prod} }\nolimits}
\newcommand{\vsprod}{\mathop{\mathchoice  
{\textstyle\bm{\prod}}  {\bm{\prod}} {\bm{\prod}} {\bm{\prod}} }\nolimits}
\newcommand{\osc}{\mathrm{osc}}

\newcommand{\norm}[1]{\left\Vert#1\right\Vert}
\newcommand{\abs}[1]{\left| {#1}\right|}

\makeatletter
\newcommand{\enorm}{\@ifstar\@enorms\@enorm}
\newcommand{\@enorms}[1]{%
\left\vert\mkern-2mu\left\vert\mkern-2mu\left\vert
#1
\right\vert\mkern-2mu\right\vert\mkern-2mu\right\vert
}
\newcommand{\@enorm}[2][]{%
\mathopen{#1\vert\mkern-2mu#1\vert\mkern-2mu#1\vert}
#2
\mathclose{#1\vert\mkern-2mu#1\vert\mkern-2mu#1\vert}
}
\makeatother

\newcommand{\avg}[2]{\{#1\}_{\raisebox{-2.4pt}{\scriptsize$#2$}}}
\newcommand{\jump}[2]{%
{\lbrack\kern-1.4pt\lbrack
#1
\rbrack\kern-1.4pt\rbrack}_{\raisebox{-1pt}{\scriptsize$#2$}}
}

\newcommand{\oversett}[2]{%
\mathop{#2}\limits^{\vbox to -.1ex{\kern -0.45ex\hbox{$\scriptstyle #1$}\vss}}}
\newcommand{\at}[1]{\big\vert_{\raisebox{-0.5pt}{\scriptsize$#1$}} }
\newcommand{\binprod}[2]{\bigl( {#1},{#2} \bigr)}

\newcommand{\Lt}{L^2(\Om)}
\newcommand{\vLt}{\vL^2(\Om)}
\newcommand{\Ho}{H^1(\Om)}
\newcommand{\Hoz}{H^1_0(\Om)}
\newcommand{\vHcrl}{\vH(\curl;\Om)}
\newcommand{\vhcrl}{\vH(\curl)}
\newcommand{\vHcrlz}{\vH_0(\curl;\Om)}
\newcommand{\vHcrlg}{\vH_{\vg}(\curl;\Om)}
\newcommand{\vHcrlzz}{{\oversett{\circ}{{}\vH}}_0(\curl;\Om)}
\newcommand{\vhdiv}{\vH(\div)}
\newcommand{\vHdiv}{\vH(\div;\Om)}
\newcommand{\vHdivw}[1]{\vH(\div\, #1;\Om)}
\newcommand{\vHs}[2]{\vH^{#1}(#2)}
\newcommand{\PHs}[1]{{P\!H}^{#1}(\Om,\mathscr{P})}
\newcommand{\vPH}[1]{{P\!\vH}^{#1}(\Om,\mathscr{P})}
\newcommand{\vPHo}{{P\!\vH}^1(\Om,\mathscr{P})}
\newcommand{\vPCinf}{{P\hspace{-1pt}\vC}^{\infty}(\Om,\sP)}
\newcommand{\ND}{\vect{\cN\!\cD}}
\newcommand{\PC}{\vect{\cP}_1}
\newcommand{\RT}{\vect{\cR\!\cT}}
\newcommand{\BDM}{\vect{\cB\!\cD\!\cM}}

\newtheorem{theorem}{Theorem}[section]
\numberwithin{theorem}{section}
\newtheorem{assumption}[theorem]{Assumption}

\newtheorem{remark}[theorem]{Remark}
\theoremstyle{definition}
\newtheorem{definition}[theorem]{Definition}
\newtheorem{lemma}[theorem]{Lemma}


\maketitle

\begin{abstract}
This paper introduces a new recovery-based a posteriori error 
estimator for the lowest order N\'{e}d\'{e}lec finite element approximation to 
the $\vhcrl$ interface problem. The error estimator is analyzed by establishing 
both the reliability and the efficiency bounds and is supported by numerical
results. Under certain assumptions, it is proved 
that the reliability and efficiency constants are independent of the jumps of the 
coefficients.
\end{abstract}

%

\thispagestyle{plain}
\markboth{ZHIQIANG CAI AND SHUHAO CAO}
{RECOVERY ERROR ESTIMATORS FOR H(CURL) PROBLEM}

\section{Introduction}
Let $\Om\subset\RR^3$ be a bounded polyhedral domain with 
Lipschitz boundary. 
Let $\mathscr{P}=\{\Om_j\}^m_{j=1}$ be a partition of the domain $\Om$
with each subdomain $\Om_j$ being polyhedron. The collection of 
interfaces $\big(\bigcup^m_{j=1}\p\Om_j\big)\backslash \p \Om$ is denoted by 
$\fI$. Assume that $\mu$ and $\b$ are piecewise, positive constants with 
respect to the partition $\mathscr{P}$. We consider the
following $\vhcrl$ interface problem:
\begin{equation}
\label{eq:pb-model}
\left\{
\begin{aligned}
\curlt (\mu^{-1} \curlt \vu) + \b \vu &= \vf, \quad \text{ in } \Om,
\\[2mm]
\vu \cross \vn &= \vg, \quad \text{ on } \p \Om,
\end{aligned} 
\right.
\end{equation}
where $\vn$ is the unit outward vector normal to the boundary of $\Om$. 
This model problem originates from a stable 
marching scheme of the second-order hyperbolic partial differential equation
on the electric field intensity $\vu$ that is resulted from the Maxwell 
equations (e.g. see \cite{Dautray-Lions,Hiptmair99}). The $\mu$ is the 
magnetic permeability, 
and the $\b \sim \frac{\e}{\D t^2}+ \frac{\s}{\D t}$ is related to the 
dielectric constant $\e$ and conductivity $\s$ scaled by the 
time-marching step size $\D t$. The boundary data $\vg$ is ``admissible'' in a 
sense that we will elaborate when introducing the finite element approximation 
\eqref{eq:pb-fem}.
Throughout this article, boldface letters stand for vector fields and spaces of 
vector fields, non-boldface letters stand for scalar functions and spaces of 
scalar functions.

The variational formulation of problem~\eqref{eq:pb-model} involves the 
Hilbert space $\vhcrl$, which is the collection of all square integrable 
vector fields whose curl are also square integrable:
\begin{equation}
\label{eq:space-hcurl}
 \vHcrl := \{\vv \in \vLt\,: \curlt \vv \in \vLt\}.
\end{equation}
The right hand side data $\vf$ 
depends on the original source current and on the electric field intensity 
at previous time steps in the time-marching scheme. In almost all relevant 
literatures, $\vf$ is assumed to be divergence free. In this paper, we assume 
that $\vf\in \vhdiv$, where $\vhdiv$ is the analog of the
$\vhcrl$ space for the divergence operator:
\begin{equation}
\label{eq:sp-hdiv}
 \vHdiv := \{\vv \in \vLt\,: \divv \vv \in \Lt\}.
\end{equation}

For the finite element approximation to~\eqref{eq:pb-model}, 
N\'{e}d\'{e}lec introduced the $\vhcrl$-conforming edge elements in 
\cite{Nedelec80}, which preserves the continuity of the tangential components, 
and certain \emph{a priori} error estimates can be established (e.g. see 
\cite{Monk}). However, the electromagnetic fields have limited regularities at
reentrant corners and material interfaces (see 
\cite{Costabel-Dauge,Costabel-Dauge-Nicaise}). Hence the assumptions for 
\emph{a priori} error estimates fail, and this is
where adaptive mesh refinement is introduced to perform local mesh refining
process within the regions that have relatively large approximation errors. 

The \emph{a posteriori} error estimation for the $\vhcrl$ problem 
in~\eqref{eq:pb-model} with constant or continuous coefficients has been 
studied recently by several researchers. 
Several types of \emph{a posteriori} error estimators have been introduced and 
analyzed. These include residual-based estimators and the corresponding 
convergence analysis (explicit \cite{Beck-Hiptmair-Hoppe-Wohlmuth,
Nicaise03, Nicaise07,Schoberl07,Chen-Xu-Zou-1,Chen-Xu-Zou-2, Chen10}, and 
implicit \cite{Harutyunyan08}), equilibrated estimators \cite{Braess06}, 
and recovery-based estimators \cite{Nicaise05}. It is interesting to note that 
there are four types of errors in the explicit residual-based estimator (see 
\cite{Beck-Hiptmair-Hoppe-Wohlmuth}). Two of them are 
standard, i.e., the element residual and the face jump associated with the 
original equation in \eqref{eq:pb-model}. The other two are also the 
element residual and the face jump, but associated with the divergence of the 
original equation: $\divv (\b \vu) =\divv \vf$.

The recovery-based estimator studied in \cite{Nicaise05} may be viewed as an
extension of the popular Zienkiewicz-Zhu (ZZ) error estimator (\cite{ZZ87}) for the 
Poisson equation to the $\vhcrl$ problem with constant coefficients. More 
specifically, two quantities related to the solution $\vu$ and $\curlt \vu$ are 
recovered based on the current approximation of the solution in a richer recovery 
space. The recovery space in \cite{Nicaise05} is the 
\emph{continuous} piecewise polynomial space, and the recovery procedure is done 
through averaging on vertex patches. 
The resulting ZZ estimator consisting of two terms is shown to be equivalent to the 
face jumps across the element faces. The element residuals are not included in the 
estimator in \cite{Nicaise05}.

The purpose of this paper is to develop and analyze an efficient, reliable, and 
robust recovery-based \emph{a posteriori} error estimator for the finite element 
approximation to the $\vhcrl$ interface problem, i.e., 
problem~\eqref{eq:pb-model} with piecewise constant coefficients. 
Theoretically, the efficiency refers that the local 
error indicator is bounded above by the local error, the reliability refers 
that the global error is bounded above by the global estimator. The robustness 
refers that constants in the efficiency and the reliability bounds are independent 
of the jumps of the coefficients.

The recovered-based estimator introduced in this paper may be 
viewed as an extension of our previous work in \cite{Cai09, Cai10} on the 
diffusion interface problem to the $\vhcrl$ interface problem, which partially 
resolve the non-robustness of ZZ error estimator for interface problems. 
Specifically, we recover two quantities related to $\mu^{-1}\curlt \vu$ and $\b \vu$ 
in the respective $\vhcrl$- and $\vhdiv$-conforming finite element spaces (the 
lowest order N\'{e}d\'{e}lec and Brezzi-Douglas-Marini elements respectively). For 
discussions on which quantities 
to be recovered and in what finite element spaces, see \cite{Cai09, Cai10}.
The resulting estimator measures the face jumps of the tangential 
components and the normal component of the numerical approximations to 
$\mu^{-1}\curlt \vu$ and $\b \vu$, respectively. Our study indicates that the 
element residual is no longer higher order than the rest terms in the 
estimator, the contrary of which is proved to be the case in the diffusion problem 
(\cite{Carstensen-Verfurth}). As a result, the 
element residual using recovered quantities is part of our proposed error estimator 
as well. 

Theoretically proving a robust reliability bound for the $\vhcrl$
interface problem is much harder than that for the diffusion interface problem. 
This is because one needs to estimate the dual norm of the residual functional
over the $\vhcrl$ space. To overcome this difficulty, one needs to use a 
Helmholtz decomposition of the error (e.g. see \cite{Beck-Hiptmair-Hoppe-Wohlmuth}). 
For the $\vhcrl$ interface problem, additional difficulty is that the 
decomposition has to be stable under a coefficient weighted norm. To obtain 
such a decomposition is a non-trivial matter, for the discrete level, a discrete 
weighted Helmholtz decomposition is studied together with its application and 
analysis for non-overlapping domain decomposition method in \cite{Hu-Shu-Zou}. 
Here for the continuous version, under certain assumptions, we are able to 
accomplish this task through establishing 
a weighted identity relating gradient, curl, and divergence of a piecewisely smooth 
vector field (see Lemma~\ref{lem:normeqiv}), which is an extension to the technique 
used in \cite{Costabel-Dauge-eigenvalue}. The final decomposition result in our 
paper is similar to the one in \cite{Zhu11}, the coefficient distribution setting is 
slightly more general than the one used in \cite{Zhu11}. Our proof uses a piecewise 
regularity result from \cite{Costabel-Dauge-Nicaise} than directly building from 
extension in \cite{Zhu11}. 

Another necessary tool for proving a robust 
reliability bound is a tweaked version of the Cl\'{e}ment-type interpolation. We
are able to extend naturally from the idea in \cite{Bernardi-Verfurth, Petzoldt02} 
for the vertex-based continuous Lagrange element to the edge-based 
N\'{e}d\'{e}lec element in three dimensions. Moreover, our quasi-monotone assumption 
on the distribution of the coefficients is based on edges which is similar to that 
of \cite{Petzoldt02} based on vertices in 2D, 
and our proof borrows the idea from \cite{Bernardi-Verfurth}. This is the first 
N\'{e}d\'{e}lec interpolation known to achieve such a robust bound.

Moving onto the efficiency bound estimate, we prove the every part of the local 
recovery-based error estimator can be bounded by the robustly weighted 
residual-based error estimator.
For the local error indicator measuring the irrotational part (gradient part) of the 
error, using the weighted averaging technique, the constant in the bounds  is 
independent of the coefficient jumps across the interface. 
For the local estimator measuring the jump of the numerical approximation to 
$\mu^{-1}\curlt \vu$ (weak solenoidal or curl part of the error), 
the degrees of freedom of the corresponding recovered quantity sits on the edge 
(lowest order N\'{e}d\'{e}lec elements). 
Consequently, the averaging is performed within an edge patch in 3D, and this 
resembles the averaging of Zienkiewicz-Zhu (ZZ) error estimator on vertex patch in 
2D. It is known that, if the averaging is done in vertex patches that span across 
the interface, ZZ error estimator (even correctly 
weighted) is not robust with respect to the ratio of the max/min of the coefficients 
on diffusion interface problem (e.g. see \cite{Cai09}). Here for the $\vhcrl$ 
interface problem, under the assumption of the quasi-monotone distribution of the 
coefficients again, we are able to prove that the ZZ type averaging, if carefully 
weighted, yields a robust efficiency bound with respect to the coefficient jump. 
This is a first known result as well.

Numerically, we are able to show that the recovery-based estimator studied in 
this paper is more accurate than the residual-based estimator in 
\cite{Beck-Hiptmair-Hoppe-Wohlmuth} for several test problems.

This paper is organized as follows. The
variational formulation and
the $\vhcrl$-conforming finite element approximation are introduced in
section 2. The explicit recovery procedures and the resulting
\emph{a posteriori} error estimators are discussed in section 3. In
section 4, the reliability and efficiency bounds are proved along with
the technical tools for analysis. The proofs of the bounds for the weighted 
Helmholtz decomposition is presented in the appendix if certain conditions are 
met. Finally, some numerical results for the benchmark testing problems are 
presented in section 5.

\section{Preliminaries}
\subsection{Notations}

Hereby we list some formal definitions concerning 
problem~\eqref{eq:pb-model}. The function space for the variational problem is:
\begin{equation}
\label{eq:sp-hcurlg}
\vHcrlg := \{\vu \in \vHcrl\,: \vu \cross \vn = \vg
\text{ on } \p \Om\},
\end{equation}
equipped with the $\vhcrl$ norm
\begin{equation}
\label{eq:norm-hcurl}
\norm{\vu}_{\vhcrl} = \Bigl(\norm{\vu}^2_{\vLt} + \norm{\curlt
\vu}^2_{\vLt} \Bigl)^{1/2} .
\end{equation}

The bilinear form of the variational problem 
to~\eqref{eq:pb-model} is:
\begin{equation}
 \label{eq:bilinearform}
\cA(\vu,\vv) := \int_{\Om} (\mu^{-1} \curlt \vu \cdot \curlt \vv +
\b\, \vu\cdot \vv)d\vx,
\end{equation}
and the coefficient-weighted norm related to this problem is:
\begin{equation}
 \label{eq:norm-energy}
\enorm{\vu}^2 := \cA(\vu,\vu).
\end{equation}
If a subscript is added for the weighted norm, it means the local weighted 
norm defined on an open subset $\cO\subset \Om$:
\begin{equation}
\label{eq:norm-local}
\enorm{\vu}_{\cO}^2 := \int_{\cO} (\mu^{-1} \curlt \vu \cdot \curlt \vu +
\b\, \vu\cdot \vu)d\vx.
\end{equation}

In addition to the standard $\vhdiv$ space \eqref{eq:sp-hdiv}, we need the 
weighted version as well:
\begin{equation}
\label{eq:sp-hdivw}
\vHdivw{\a} = \{\vv\in \vLt: \divv (\a \vv) 
\in \Lt \text{ in } \Om \}.
\end{equation}

Let $\cT_h = \cup \{K\}$ be a triangulation of $\Om$ using tetrahedra 
elements. The sets of all the vertices, edges, and faces of this triangulation 
are denoted by $\cN_h$, $\cE_h$, and $\cF_h$, respectively. Denote the vertices, 
edges, and faces being subsets or elements of a geometric object $\cM$ by 
$\cN_h(\cM)$, $\cE_h(\cM)$, and $\cF_h(\cM)$, where $\cM$ can be an element from the 
objects in the simplicial complex of the triangulation like a specified element $K$, 
or the whole boundary $\p\Om$, etc.
For any vertex $\vz \in \cN_h$, let $\lam_{\vz}$ be the nodal basis function of 
continuous piecewise linear element associated with the vertex $\vz$.
 
A fixed unit normal vector $\vn_F$ is assigned to each face $F$, and a fixed 
unit tangential vector $\vt_e$ to each edge $e$. For any 
scalar- or vector-valued function $v$, define 
$\jump{v}{F} = v^- - v^+$ on an interior face $F\in \cF_h$ with a 
fixed unit normal vector $\vn_F$, where 
$v^{\pm} = \lim\limits_{\e\to 0^{\pm}}v(\vx+\e \vn_F)$.
Define $\avg{v}{F} = (v^+ + v^-)/2$ as the average on 
this face $F$. If $F$ is a boundary face, the function $v$ is extended by zero 
outside the domain to compute $\jump{v}{F}$ and $\avg{v}{F}$.

The following algebraic identity is handy later in 
proving identities involving interfaces for any scalar- or vector-valued 
quantities $a$ and $b$:
\begin{equation}
\label{eq:dcp-jump}
\jump{ab}{F} = \avg{a}{F}\jump{b}{F} + \jump{a}{F} \avg{b}{F}.
\end{equation}

Denote the diameter of an element
$K\in \cT_h$ by $h_K$ and the diameter of a face $F\in \cF_h$ by $h_F$. We 
assume that the triangulation $\cT_h$ is shape regular (see \cite{Ciarlet}), 
and this assumption holds for any tetrahedron during the local mesh refining 
process.

The following notations serve as the languages to describe the local element or face 
patches. They will be used later in local weighted 
recovery procedure (Section 3), and in the proof of estimates for the 
weighted Cl\'{e}ment-type interpolation (Section 4).

\begin{figure}[htb]
\begin{center}
\subfloat[For edge $e$ with the unit tangential vector $\vt_e$, 
$\om_e = \cup_{i=1}^4 K_i$, $\om_{e,F} = \cup_{i=1}^4 F_i$.]
{\includegraphics[scale=1]{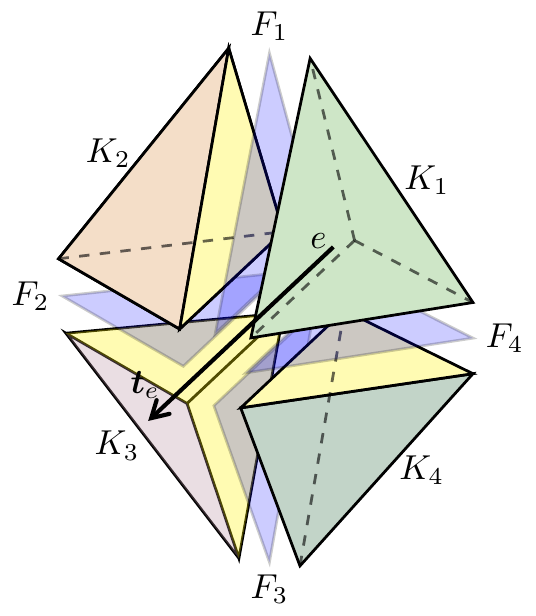}
\label{fig:geom-patche}
}
\hspace{0.5in}
\raisebox{0.5cm}{
\subfloat[$\om_F = K_+\cup K_-$ with the unit normal vector $\vn_F$.]
{\includegraphics[scale=1]{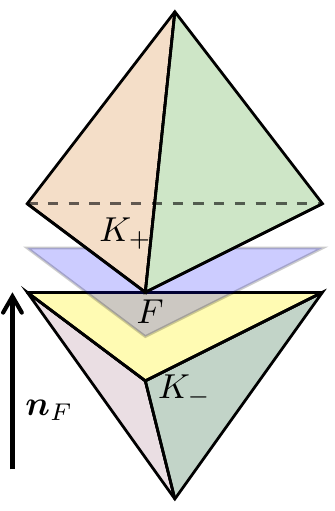}
\label{fig:geom-patchf}
}
}
\end{center}

\caption{A dissection view of the local edge patch $\om_e$, $\om_{e,F}$, 
and face patch $\om_F$}
\label{fig:geom-patch}
\end{figure}

For a face $F\in \cF_h$, let $\om_F$ be the patch of 
the tetrahedra sharing this face $F$. Let $\om_{K,F}$ be the patch of the 
tetrahedra sharing a face with $K$.

For an edge $e\in \cE_h$, denote by 
\[
\om_e = \mcup_{\{K\in \cT_h: \,e\in \cE_h(K) \}} K
\]
the collection of all elements having $e$ as a common edge, where 
$\cE_h(K)$ is the collection of edges of the element $K$.
For the edge patch $\omega_e$, we define two $\mu$-weighted edge patches associated 
with an edge $e\in\cE_h$, which can be understood as the collection of the elements 
with the biggest/smallest $\mu^{-1}$ on an edge patch, are referred to 
\begin{equation}
\label{eq:patch-e}
\begin{gathered}
\wt{\om}_e =  \mcup_{K\in \cI_e} K, \; \text{ where }\;  
\cI_e = \{K\subset \om_e: \mu_{K}^{-1}=\max_{K'\subset \om_e} \mu_{K'}^{-1} \},
\\
\text{ and }\quad \wh{\om}_e =  \mcup_{K\in \cI_e} K, \; \text{ where }\;  
\cI_e = \{K\subset \om_e: \mu_{K}^{-1}=\min_{K'\subset \om_e} \mu_{K'}^{-1} \}.
\end{gathered}
\end{equation}
Denote the union of the interior faces within an edge patch as follows:
\begin{equation}
\label{eq:patch-ef}
\om_{e,F} = \mcup_{F\in \cF_h(\om_e)} F \backslash\, \p \om_e.
\end{equation}
Using Figure \ref{fig:geom-patch} as an illustration, $\om_{e,F} = \cup_{i=1}^4 F_i$.
Define the $\mu$-weighted patch of interior faces as follows:
\begin{equation}
\label{eq:patch-wtef}
\wh{\om}_{e,F} = \mcup_{F\in \cI_e} F, \; \text{ where }\;  
\cI_e = \{F\in \cF_h(\wh{\om}_e): F\subset \om_{e,F}\}.
\end{equation}
Taking Figure \ref{fig:geom-patch} as an example again, if $\wh{\om}_e = K_1\cup 
K_2$, then $\wh{\om}_{e,F} = F_1\cup F_2 \cup F_4.$ 

For an element $K\in\cT_h$, denote the patch of all elements sharing an edge 
with $K$ by
\begin{equation}
\label{eq:patch-Ke}
\om_{K,e} = \mcup_{K\in \cI_{K,e}} K, \; \text{ where }\; 
\cI_{K,e} = \{K\in \cT_h: K \subset \om_e\mbox{ with } e\in \cE_h(K) \}.
\end{equation}

Similarly, for a vertex $\vz\in \cN_h$, denote by
\[
\om_{\vz} = \mcup_{\{K\in \cT_h: \;\vz\in \cN_h(K) \}} K
\]
the collection of all elements having $\vz$ as a common vertex. For the vertex 
patch $\omega_{\vz}$, the $\b$-weighted edge patch associated with an 
vertex $\vz\in\cN_h$ is referred to 
\begin{equation}
\label{eq:patch-z}
\wt{\om}_{\vz} = \mcup_{K\in \cI_{\vz}} K, \; \text{ where }\; 
\cI_{\vz} = \{K\subset \om_{\vz}: \b_K=\max_{K\subset \om_{\vz}} \b_K \}.
\end{equation}
For an element $K\in\cT_h$, denote the patch of all elements sharing a vertex 
with $K$ by
\begin{equation}
\label{eq:patch-Kz}
\om_{K,\vz} = \mcup_{K\in \cI_{K,\vz}} K, \; \text{ where }\; 
\cI_{K,\vz} = \{K\in \cT_h: K \subset \om_{\vz}\mbox{ with } \vz\in \cN_h(K)\}.
\end{equation}

\subsection{Finite Element Approximation}
The corresponding variational formulation 
of~\eqref{eq:pb-model} is
\begin{equation}
\label{eq:pb-variational}
\left\{
\begin{aligned}
& \text{Find } \vu\in \vHcrlg \text{ such that:}
\\[2mm]
& \cA(\vu,\vv) = (\vf,\vv), \quad \forall\, \vv \in \vHcrlz,
\end{aligned}
\right.
\end{equation}
where $(\cdot,\cdot)$ is the standard $\vLt$-inner product. Because of $\mu$ 
and $\b$ being uniformly positive on the domain, the coefficient-weighted
norm~\eqref{eq:norm-energy} is equivalent to the graph norm \eqref{eq:norm-hcurl} 
for $\vHcrl$. Moreover the bilinear form \eqref{eq:bilinearform} is intrinsically 
coercive with respect to this norm. 
By the Lax-Milgram lemma, there exists a unique solution in $\vHcrlg$ to 
problem~\eqref{eq:pb-variational} when the boundary data is ``admissible''.

The solution  $\vu$ of~\eqref{eq:pb-variational} is approximated in a
$\vHcrl$-conforming finite element space: the lowest order N\'{e}d\'{e}lec 
finite element space $\ND_0$ (see \cite{Nedelec80}). On each element $K$, define
\[
\ND_0(K) = \{\vp(\vx) \in \vP_1(K): \vp = \va + \vb \cross \vx,
\, \va, \vb\in \RR^3\}.
\]
The global finite element space $\ND_0$ is glued together through the
continuity condition of $\vHcrl$:
\[
\ND_0 = \{\vp \in \vHcrl: \vp(\vx)\big|_K \in \ND_0(K) 
\quad \forall K\in \cT_h\}.
\]
For simplicity, we assume that the Dirichlet boundary data can be represented 
as the tangential trace of an $\ND_0$ vector field, i.e., $\vg = \vp\cross \vn$ 
on the boundary, where $\vp \in \ND_0$. The finite element approximation is
\begin{equation}
 \label{eq:pb-fem}
\left\{
\begin{aligned}
& \text{Find } \vu_h\in \ND_0\cap \vHcrlg \text{ such that:}
\\[2mm]
& \cA(\vu_h,\vv_h) = (\vf,\vv_h), \quad 
\forall\, \vv_h \in \ND_0\cap \vHcrlz.
\end{aligned}
\right.
\end{equation}
The problem in~\eqref{eq:pb-fem} is well-posed in its own right.

Before building the error estimator, we need an $\vHdiv$-conforming finite 
element space $\BDM_1$, which is the linear order Brezzi-Douglas-Marini face 
element (see \cite{Brezzi-Fortin}). On each element $K$, define, in a way that leads 
to the local basis construction,
\[
\BDM_1(K) = \{\vp(\vx) \in \vP_1(K): \vp = \va + c \,\vx 
+ \curlt(b\vs) , \; \va,\vs \in\RR^3, c\in \RR^1, b\in B(K) \},
\]
where $B(K)$ the space of quadratic edge bubble functions in $K$. Similarly 
the global $\BDM_1$ inherits the continuity condition from $\vHdiv$:
\[
\BDM_1 = \{\vp \in \vHdiv: \vp(\vx)\big|_K \in \BDM_1(K) 
\quad \forall K\in \cT_h\}.
\] 
Let $K\in\cT_h$ be an arbitrary tetrahedral element with vertices $\vz_i$,  
$\vz_j$, $\vz_k$, and $\vz_l$, and
let $\vn_{i}$ be the outer unit vector normal to the face
$F_i = \conv( \vz_j \vz_k \vz_l)$, opposite to the vertex $\vz_i$.
Let $\vt_{ij}$ be the unit 
vector of the edge $e_{ij}$ orienting in the direction of $\vz_j - \vz_i$.
The $\ND_0$ nodal basis function for the edge $e_{ij}$ can 
be written as (e.g. see~\cite{Solin-Segeth-Dolezel,Whitney}) :
\begin{equation}
\label{eq:basis-nd0}
\bvphi_{e_{ij}} =
\frac{\lam_i  \vn_j}{\vt_{ij}\cdot \vn_j} 
- \frac{\lam_j  \vn_i}{\vt_{ij}\cdot \vn_i} ,
\end{equation}
where $\lam_n$ for $n=i,\,j,\,k,\,l$ are the barycentric coordinates associated 
with the vertex $\vz_n$ satisfying $\lam_i + \lam_j + \lam_k + \lam_l=1$. The 
degree of freedom of $\ND_0$ can be then associated with each edge $e\in 
\cE_h$, in that $\bvphi_{e}$ satisfies
\[
\bvphi_{e} \cdot \vt_{e'}\at{e'}= 
\frac{1}{|e'|}\int_{e'} \bvphi_{e} \cdot \vt_{e'} \,ds=  \pm\d_{e e'},
\quad \forall e'\in \cE_h(K),
\]
where $\d_{e e'}$ is the Kronecker delta. The plus sign is taken when locally 
$\vt_{e}$'s direction coincides with the globally fixed $\vt_{e'}$'s. 

Similarly, we cook up a customized version 
of the $\BDM_1$ nodal basis function associated with the vertex $\vz_j$ on face 
$F_i$ as follows
\begin{equation}
\label{eq:basis-bdm}
\bpsi_{F_i,\vz_j} = 
9\frac{\lam_j\vt_{ij}}{\vn_{i}\cdot \vt_{ij}}
- 3\frac{\lam_k\vt_{ik}}{\vn_{i}\cdot \vt_{ik}}
- 3\frac{\lam_l\vt_{il}}{\vn_{i}\cdot \vt_{il}}.
\end{equation}
Now the degrees of freedom of $\BDM_1$ can be defined using the first moment 
on $F_i$ because $\bpsi_{F_i,\vz_j}$ satisfies:
\[
\frac{1}{|F_i|}\int_{F_i} (\bpsi_{F_i,\vz_j}\cdot \vn_{F_i})\lam_{\vz_m}\,dS = 
\pm\delta_{jm}, \quad \text{ for } m\in \{j,k,l\},
\]
and $\bpsi_{F_i,\vz_j}\cdot \vn_{F'} = 0$ for any face $F'\in \cF_h(K)$ other 
than $F_i$. Similarly, the plus sign is taken when locally the exterior unit normal 
$\vn_i$ to face $F_i$ is in the same direction with the globally fixed unit normal 
$\vn_{F_i}$.

\section{The Recovery-type Error Estimator}

There are two important physical quantities of interest: 
the magnetic field intensity and displacement current
density which are related to the electric field intensity 
$\vu$. The magnetic field intensity at current time step is denoted by $\bsig$, 
and displacement current density diluted by the time step size is denoted by 
$\btau$. For $\vhcrl$ problem~\eqref{eq:pb-model} that is time-independent, they can 
be represented by the following
\begin{equation}
\label{eq:constitutive}
\bsig = \mu^{-1} \curlt \vu \; \text{ and } \; \btau = \b \vu,
\end{equation}
then the partial differential equation in \eqref{eq:pb-model} 
can be rewritten as 
 \begin{equation}\label{eq:equilibrium}
 \curlt \bsig + \btau  = \vf. 
 \end{equation}
By the assumption of the 
data $\vf\in \vHdiv$, it is straightforward to verify that the $\vu$, 
$\bsig$, and $\btau$ lie in the following spaces
\begin{equation}
\label{eq:sp-cont}
\begin{gathered}
\vu \in \vHcrl, \quad
\btau \in  \vHdiv,
\\ 
\text{and }\;\bsig \in \vHcrl\cap \vHdivw{\mu},
\end{gathered}
\end{equation}
where the weighted space $\vHdivw{\mu}$ is defined in \eqref{eq:sp-hdivw}.

The relation \eqref{eq:sp-cont} indicates the continuity conditions $\vu$, 
$\bsig$, and $\btau$ must fulfill in the continuous level in \eqref{eq:constitutive} 
and \eqref{eq:equilibrium}. These continuity 
requirements not just come from the operator theory in Hilbert spaces, but 
also translate from the original Maxwell equations. For an 
arbitrary interface $S$ within the domain, if there 
is no surface charge on $S$, it is well known from physics 
(e.g., see~\cite{Monk}) that
\begin{equation}
\label{eq:jump}
\jump{\vu\cross\vn}{S} = \bm{0}, \;\; \jump{\bsig\cross\vn}{S} = \bm{0}, 
\;\;
\jump{\mu\bsig\cdot\vn}{S} = 0,
\;\;\text{and}
\;\;
\jump{\btau\cdot \vn}{S} = 0. 
\end{equation}
These zero-jump conditions are consistent with 
the continuity conditions for $\vhcrl$ and $\vhdiv$, respectively. 
However, during rendering the continuous problem into the finite 
element approximation, numerical magnetic field intensity 
and numerical displacement current density,
\[ 
 \bsig_h := \mu^{-1}\curlt \vu_h
 \quad\text{and}\quad
 \btau_h := \b \vu_h,
\] 
violate the second and the last continuity
conditions from \eqref{eq:jump}, respectively. 
Therefore, two quantities are recovered in the respective $\vhcrl$-conforming 
and $\vhdiv$-conforming finite element spaces using an explicit local weighted 
averaging technique.
Note that that the normal component of $\btau_h$ is 
a piecewise polynomial of degree one on each face. Consequently, we need to use 
either $\BDM_1$ or $\RT_1$ for recovering displacement current density, instead 
of $\RT_0$.

\subsection{Local Recovery Procedure}
We recover two quantities, $\bsig^*_h$ and $\btau^*_h$, based on $\bsig_h$ and 
$\btau_h$ through weighted averaging locally on edge and face patches, respectively. 
To this end, for a fixed interior face $F\subset \cF_h$, denote by $K_{\pm}$ the 
neighboring tetrahedra sharing this $F$ as a common face. 
Recall that $\vn_F$ is the fixed unit vector normal to the face $F$, let $K_+$ 
be the tetrahedron with $\vn_F$ as its inward normal, and $K_-$ with $\vn_F$ as its 
outward normal (see Figure \ref{fig:geom-patch}). Let 
\begin{equation}
\label{eq:rec-wt}
\g^{-}_{F} =\dfrac{\mu^{-1/2}_{K_+}} 
{\mu^{-1/2}_{K_-} + \mu^{-1/2}_{K_+}}
\,\mbox{ and }\,
\k^-_{F} = \dfrac{\b^{1/2}_{K_+}} {\b^{1/2}_{K_-} + \b^{1/2}_{K_+}},
\end{equation}
and $\g^+_{F} = 1- \g^-_{F}$, $\k^+_{F} = 1- \k^-_{F}$. If $F$ is a boundary face 
with its neighboring tetrahedron $K_-$, we set $\g^+_{F} =\k^+_{F}= 1$ and 
$\g^-_{F} = \k^-_{F} = 0$.

The local averages $\bsig_{h,F}$ and $\btau_{h,F}$ on face $F$ are chosen using the 
weights above:
\begin{equation}
\label{eq:rec-favg}
\bsig_{h,F} = \g^-_{F} \bsig_{h,K_-} + 
\g^+_{F}\bsig_{h,K_+}
\,\mbox{ and }\,
\btau_{h,F} = \k^-_{F} \btau_{h,F_-} 
+ \k^+_{F}\btau_{h,F_+}
\end{equation}

respectively, where $\bsig_{h,K_{\pm}} = \bsig_h\at{K_{\pm}}$, and
$\btau_{h,F_{\pm}}(\vx) = \lim\limits_{\e \to 0^{\pm}}\btau_h(\vx + \e \vn_F)$ on 
face $F$. The notation discrepancy in above construction is due to the fact, which 
is mentioned earlier in previous subsection, that 
$\btau_h$'s normal component on each face is a linear polynomial, yet $\bsig_h$ is a 
constant vector on a fixed $K$. 

Now we construct the recovered quantities $\bsig_h^*$ and 
$\btau_h^*$ from the above local averages of $\bsig_h$ and $\btau_h$ as follows:
\begin{equation}
\label{eq:rec-local}
\begin{gathered}
\bsig^*_h(\vx) = \sum_{e\in \cE_h} 
\left\{\sum_{F \subset \wh{\om}_{e,F}}
\Bigg(\frac{1}{|\wh{\om}_{e,F}|}\int_{F} 
(\bsig_{h,F}\cdot \vt_e)\,dS\Bigg) \right\}
\,\bvphi_e(\vx) 
\\
\text{ and }\;
\btau^*_h(\vx) = \sum_{F\in \cF_h} \sum_{\vz \in \cN_h(F)}
 \Bigg(\frac{1}{|F|}\int_{F} (\btau_{h,F}\cdot \vn_F)\lam_{\vz}\,dS\Bigg) 
\,\bpsi_{F,\vz}(\vx) ,
\end{gathered}
\end{equation}
where $\bvphi_e\in \ND_0$ and $\bpsi_{F,\vz}\in \BDM_1$ 
are the nodal basis functions associated with
the respective edge $e$ and vertex $\vz$ on face $F$
(see~\eqref{eq:basis-nd0} and~\eqref{eq:basis-bdm}). 

By the construction of the basis functions in \eqref{eq:basis-nd0} and 
\eqref{eq:basis-bdm}, we can see $\bsig^*_h\in \vHcrl$ and $\btau^*_h\in \vHdiv$, 
respectively. The degrees of freedom of $\btau^*_h$ are the weighted averages of 
$\btau_h:= \b\vu_h$ on a face patch. The degrees of freedom of $\bsig^*_h$ is the 
weighted averages of $\bsig_h:= \mu^{-1}\curlt\vu_h$ on selected interior faces in 
an edge patch.

Now we 
may define the local error indicator ${\eta}_K^2 = {\eta}_{K,\perp}^2 + 
{\eta}_{K,0}^2 + {\eta}_{K,R}^2$ based on these averages plus the 
recovery-type element residual:
\begin{equation}
\label{eq:est-locavg}
\begin{aligned}
&{\eta}_{K,\perp} = \norm{\mu^{1/2} 
{\bsig}_h^* - \mu^{-1/2} \curlt \vu_h 
}_{\vL^2(K)}, \quad {\eta}_{\perp}^2 = \sum_{K\in \cT_h} 
{\eta}_{K,\perp}^2, 
\\
&{\eta}_{K,0} = 
\norm{\b^{-1/2}  \btau^*_h - \b^{1/2} \vu_h }_{\vL^2(K)},
\quad {\eta}_{0} ^2 = \sum_{K\in \cT_h} {\eta}_{K,0}^2,
\\
&
{\eta}_{K,R} =
\mu^{1/2}_K h_K\norm{\vf - \b \vu_h - \curlt \bsig^*_h}_{\vL^2(K)}, 
\quad\text{ and }  {\eta}_{R}^2 = \sum_{K\in \cT_h} {\eta}_{K,R}^2.
\end{aligned}
\end{equation}
The global error 
estimator is defined by 
${\eta}^2= {\eta}_{\perp}^2 + {\eta}_{0}^2 + {\eta}_{R}^2$. 

\section{Reliability and Efficiency Bounds}

This section studies the reliability and efficiency of the estimators defined 
in the previous section. The efficiency bound of the local indicator is established 
in section 4.4. To prove the reliability bound of 
the global estimator, we need two tools: (1) a Helmholtz decomposition with weighted 
norm estimate section 4.1, for detailed proof under certain assumption please see 
Appendix A) that splits the error into two parts, and (2) a modified 
Cl\'{e}ment-type  interpolation (section 4.2). 
Under the assumption of a robust weighted Helmholtz decomposition exists, two 
quasi-monotonicity assumptions on the distribution of the 
coefficients, the reliability bound is obtained in section 4.3, and it is 
uniform with respect to the jumps of the coefficients. 

\subsection{Helmholtz Decomposition}
For any vector in $\vHcrlz$, there exists an orthogonal 
decomposition with respect to the bilinear form 
$\cA(\cdot,\cdot)$ (see \cite{Costabel-Dauge-Nicaise, Fernandes-Gilardi}).
This weighted splitting was used in \cite{Beck-Hiptmair-Hoppe-Wohlmuth} for 
continuously differentiable $\mu$ and $\b$ to prove 
the reliability bound of a residual-based a posteriori error estimator. Here we 
first present the robust weighted splitting result as an assumption (Assumption 
\ref{asp:dcp-general}), then in Appendix \ref{appendix:decomp} 
we show the proof of a bound independent of the 
coefficient jump ratio, under certain assumptions about the geometries and the 
relations between coefficients.

Define the $\vX(\Om,\b)$ as the space of curl-integrable functions intersecting 
weighted div-integrable vector fields, and $\vPH{s}$ as the space of 
\emph{piecewisely continuous} vector fields on each subdomain: 
\begin{equation}
\label{eq:sps-decomp}
\begin{aligned}
&\vX(\Om,\b) = \vHdivw{\b}\cap \vHcrlz,
\\[1mm]
&\text{and } \vPH{s} = 
\{\vv \in \vLt: \vv\at{\Om_j} \in \vHs{s}{\Om_j}, \, j = 1,\ldots,m \}.
\end{aligned}
\end{equation}

\begin{assumption}[Weighted Helmholtz decomposition]
\label{asp:dcp-general}
We assume that for any $\vv\in \vHcrlz$, there 
exist $\psi \in \Hoz$ and $\vw \in \vPH{1}\cap \vX(\Om,\b)$ such that the 
following decomposition holds
\begin{equation}
\label{eq:dcp}
\vv = \vw + \nab \psi.
\end{equation}
Moreover, the following estimate holds:
\begin{equation}
\label{eq:dcp-estimate}
\sum^m_{j=1}\norm{\mu^{-1/2}\nab \vw}_{\vL^2(\Om_j)} 
+ \norm{\b^{1/2}\vw}_{\vLt}
+ \norm{\b^{1/2}\nab \psi}_{\vLt} \leq C\enorm{\vv} .
\end{equation}
\end{assumption}

\subsection{Cl\'{e}ment-type N\'{e}d\'{e}lec Interpolation}  

Weighted Cl\'{e}ment-type interpolation operators for nodal 
Lagrange elements are studied in \cite{Bernardi-Verfurth,Petzoldt02}. Stability and 
approximation properties of this type of operators are often 
used in proving a robust reliability bound for a posteriori error estimators. 
For N\'{e}d\'{e}lec elements, the standard unweighted quasi-interpolations for 
N\'{e}d\'{e}lec elements are studied in 
\cite{Beck-Hiptmair-Hoppe-Wohlmuth,Nicaise07,Schoberl07}. In 
\cite{Beck-Hiptmair-Hoppe-Wohlmuth}, the author defines the edge degrees of 
freedom by averaging on a certain face where that edge lies, which is similar 
to the construction of the Scott-Zhang interpolation operators. 
In \cite{Nicaise07}, the averaging is performing on the edge patch consisting 
of two triangles in 2D. Following the idea of averaging on the weighted vertex patch 
in \cite{Bernardi-Verfurth,Petzoldt02}, and extending the averaging technique on 
edge patch in \cite{Nicaise07} to the three 
dimensional case, we construct a weighted Cl\'{e}ment-type N\'{e}d\'{e}lec 
interpolation operator from $\vHcrlz$ to $\ND_0$. If the vector field to be 
interpolated has $\vPH{1}$ regularity, then the approximation and 
stability properties of the interpolant are proved to be robust under the weighted 
norm, with the assumption that the coefficient is quasi-monotone in Assumption 
\ref{asp:mnt}.

First we define the standard N\'{e}d\'{e}lec interpolation in any $K\in 
\cT_h$.  To make this interpolant well-defined and bounded, 
we need to restrict that the vector field to be interpolated on each element 
$K$ lies in the space $\vH^{1/2+\d}(K)$ for some $ \d>0 $, with its curl in 
$\vL^p(K)$ for some $p>2$ (see \cite{Monk} Lemma 5.38).

\begin{definition}[N\'{e}d\'{e}lec interpolation]
\label{def:interp-nedelec}
For any $\vv\at{K} \in \vH^{1/2+\d}(K)$ with $\curlt \vv\at{K}\in \vL^p(K)$, 
define the interpolation 
operator $\sprod_h: \vH^{1/2+\d}(K) \to \ND_0$ on each element $ 
K\in \cT_h$ as follows: 
\[
\sprod_h \vv \at{K}= \sum_{e\in \cE_h(K)} \a_{e}(\vv) \,\bvphi_e, \quad 
\text{with } \;\a_{e}(\vv) = \frac{1}{|e|}\int_e \vv\cdot \vt_e \,ds.
\]
\end{definition}

\begin{definition}[Weighted Cl\'{e}ment-type N\'{e}d\'{e}lec interpolation]
\label{def:interp-weighted}
For any $\vv\in \vHcrlz$, such that 
$\vv\at{\om_{K,e}} \in \vH(\curl;\om_{K,e}) \cap {P\!\vH}^{1}(\om_{K,e},\sP)$, 
define the weighted quasi-interpolation operator $ \wt{\sprod}_h: \vLt \to 
\ND_0 $ on each element $ K\in \cT_h $ as follows: 
\[
\wt{\sprod}_h \vv\at{K}= \sum_{e\in \cE_h(K)} \wt{\a}_{e}(\vv)\,\bvphi_e ,
\quad \text{with }\;
\wt{\a}_{e}(\vv) =\left(\frac{1}{|\wt{\om}_e|}\int_{\wt{\om}_e} 
\vv\,d\vx\right)\cdot \vt_e
\]
if 
$e$ is an interior edge, i.e., the 1-dimensional Lebesgue measure 
$\mathrm{meas}_1 (e\cap\p\Om) = 0$. 
If $e\in \cE_h(\p \Om)$, then $\wt{\a}_{e}(\vv) = 0$.
\end{definition}

To establish the stability and approximation bounds for this interpolation 
uniform with respect to $\mu^{-1}$, a quasi-monotonicity assumption is needed 
on the distribution of the coefficients associated with each edge patch 
$\omega_e$ in three dimensions, which is similar to those of 
\cite{Bernardi-Verfurth,Petzoldt02} associated with each vertex patch in two 
dimensions. The quasi-monotonicity, in layman's terms, can be phrased as ``for every 
element in an edge patch, there exists a simply-connected element path leading to 
the element where the coefficient achieves the maximum (or minimum) on this patch''. 
The following assumption is stated in a mathematically rigorous way to convey above 
idea.

\begin{assumption}[Quasi-monotonicity of the $\mu^{-1}$ in an edge patch]
\label{asp:mnt}
For each edge $e\in\cE_h$, if $e$ is an interior edge, 
for every $K\subset \om_{e}$, and every $K'\subset \wt{\om}_{e}$, 
{\em (i)} assume that there exist a collection of elements 
$\displaystyle\om'_e= \mcup_{i=1}^{l(K,e)} K_i\subset \om_e$ 
with $K_{l(K,e)}\subset \wt{\om}_e$, 
such that $K_{i}$ shares a face with $K_{i-1}$, and 
that $\mu_{K_{i-1}}^{-1}\le \mu_{K_i}^{-1}$ for all $i = 1,\dots, l(K,e)$, 
where $K_0 = K$.
If $e$ is a boundary edge, for every $K\subset \om_{e}\backslash \wt{\om}_e$, 
and every $K'\subset \wt{\om}_{e}$, {\em (ii)} assume that {\em (i)} holds, 
and the 
2-dimensional Lebesgue measure $\mathrm{meas}_2(\p\wt{\om}_e\cap \p\Om) > 0 $.
\end{assumption}

The assumption is phrase using $\wt{\om}_{e}$, the assumption remains the same if we 
switch $\wt{\om}_{e}$ to $\wh{\om}_{e}$, and reverse the direction of the 
inequalities. 
 
If Assumption \ref{asp:mnt} is satisfied, the extended 
$\mu$-weighted patch for an element $K$ is denoted as 
\begin{equation}
\label{eq:patch-Kewt}
\wt{\om}_{K,e} = K \mcup_{e\in \cE_h(K)}  
\om'_e \mcup_{e\in \cE_h(K)} \wt{\om}_e.
\end{equation}

\begin{remark}
Assumption~{\em \ref{asp:mnt} (i)} is weaker than the extension of 
the quasi-monotonicity assumption in {\em\cite{Bernardi-Verfurth}}, 
and is the equivalent to the extension of the quasi-monotonicity assumption in 
{\em\cite{Petzoldt02}} from the vertex patch in two dimensions to the edge 
patch in three dimensions. 
Notice if Assumption {\em \ref{asp:mnt}} is met, then $\wt{\om}_e$ 
is a simply connected Lipschitz polyhedron for any interior edge $e$.
\end{remark}

\begin{figure}[H]
\begin{center}
\subfloat[Quasi-monotone in the senses of the extension of 
\cite{Bernardi-Verfurth}, the extension of \cite{Petzoldt02}, and Assumption 
\ref{asp:mnt}.]
{\includegraphics[scale=1]{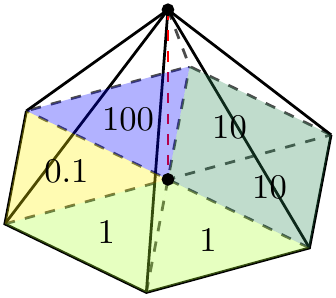}
\label{fig:geom-1}
}
\qquad
\subfloat[Quasi-monotone in the senses of the extension of 
\cite{Petzoldt02} and Assumption \ref{asp:mnt}, not in the 
extension of \cite{Bernardi-Verfurth}.]
{
\includegraphics[scale=1]{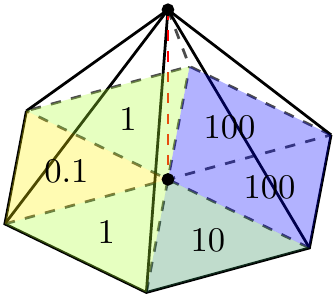}
\label{fig:geom-2}
}
\qquad
\subfloat[Not quasi-monotone in any sense.]
{
\includegraphics[scale=1]{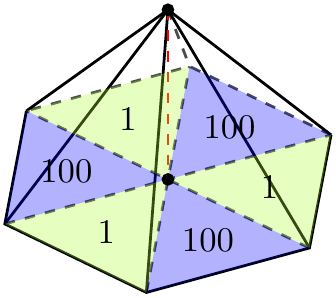}
\label{fig:geom-3}
}
\end{center}

\caption{Different scenarios of the coefficient distribution for $\mu^{-1}$ 
for an interior edge patch $\om_e$, where the edge $e$ is marked as red dotted 
vertical edge in each figure. The tetrahedra whose bases are marked using 
blue color in (a) and (b) consist the $\wt{\om}_e$ for this edge patch.}
\label{fig:geom-edge}
\end{figure}

The illustrations in Figure \ref{fig:geom-edge} show the difference and 
similarity between the Assumption \ref{asp:mnt} and the extension 
to those in \cite{Bernardi-Verfurth,Petzoldt02}. 
In Figure \ref{fig:geom-1}, for any two tetrahedra in the edge patch $\om_e$, 
there always exists a monotone path connecting 
these made of tetrahedra, along which one tetrahedron shares one face with the 
next tetrahedron in this path. In Figure \ref{fig:geom-2}, along the path from 
any tetrahedron in this patch to the one with the biggest coefficients $\mu^{-1}$, 
the coefficients are monotone. In Figure \ref{fig:geom-3}, the coefficient 
distribution of the checkerboard type is not quasi-monotone in any sense, and a 
Cl\'{e}ment-type interpolation cannot achieve a robust bound (see 
\cite{Petzoldt02,Xu91}), if the edge of interest is an interior edge of the 
triangulation.

\begin{figure}[h]
\begin{center}

\begingroup
\captionsetup[subfigure]{width=0.7\textwidth, %
justification=raggedright,margin={-1cm,-0.1cm}}
\subfloat[Quasi-monotone in the sense of Assumption \ref{asp:mnt}, 
the extension of \cite{Petzoldt02}, and the extension of 
\cite{Bernardi-Verfurth}.]
{\includegraphics[scale=1]{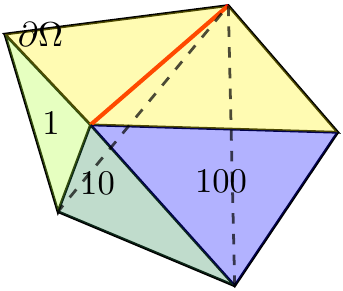}
\label{fig:geom-6}
}
\qquad\qquad
\subfloat[Quasi-monotone in the sense of Assumption \ref{asp:mnt} 
and the extension of \cite{Petzoldt02}, 
not in the sense of the extension of \cite{Bernardi-Verfurth}.]
{\includegraphics[scale=1]{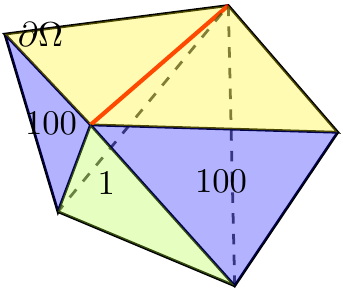}
\label{fig:geom-7}
}

\subfloat[Quasi-monotone for the edge patch in the sense of Assumption 
\ref{asp:mnt}, not quasi-monotone for 
vertex patch for the black dotted vertex.]
{\includegraphics[scale=1]{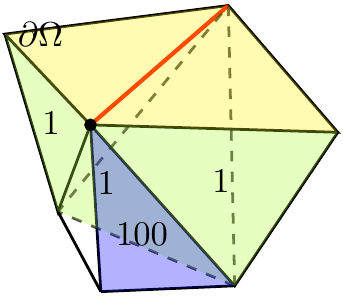}
\label{fig:geom-8}
}
\qquad\qquad
\subfloat[Not quasi-monotone in any sense.]
{\includegraphics[scale=1]{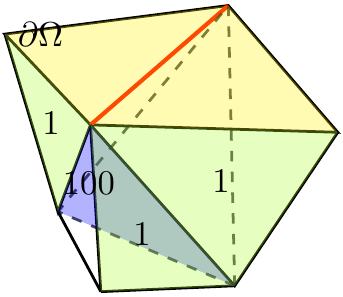}
\label{fig:geom-9}
}
\end{center}
\endgroup
\caption{Different scenarios of the coefficient distribution for $\mu^{-1}$ 
for a boundary edge patch $\om_{e}$, where $e$ is marked red, the boundary 
faces are marked yellow, and the coefficient in each tetrahedron is marked on 
its front faces towards the viewer.}
\label{fig:geom-boundary}
\end{figure}

\begin{theorem}[Approximation and stability properties]
\label{thm:interp-weighted}
Under Assumption~{\em \ref{asp:mnt}}, the interpolation operator $ 
\wt{\sprod}_h$ in Definition~{\em \ref{def:interp-weighted}}, 
satisfies the following estimates:
\begin{equation}
\label{eq:interp-estimate}
\begin{gathered}
\norm{\mu^{-1/2} (\vv - \wt{\sprod}_h \vv)}_{\vL^2(K)} 
\leq C\, h_K \norm{\mu^{-1/2} \nab \vv}_{\vL^2(\wt{\om}_{K,e})},
\\[1mm]
\mbox{and }\,  \norm{\mu^{-1/2} \curlt (\vv - \wt{\sprod}_h \vv)}_{\vL^2(K)} 
\leq C \norm{\mu^{-1/2} \nab \vv}_{\vL^2(\wt{\om}_{K,e})}
\end{gathered}
\end{equation}
for all $\vv\in \vHcrlz$ such that 
$\vv\at{\om_{K,e}} \in \vH(\curl;\om_{K,e}) \cap {P\!\vH}^{1}(\om_{K,e},\sP)$, 
where the Jacobian $\nab \vv$ is defined piecewisely by 
$\nab \vv\at{K} := \nab (\vv\at{K})$.
\end{theorem}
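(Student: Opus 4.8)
The plan is to establish the two estimates via the standard Clément-interpolation machinery, but carried out in the $\mu^{-1}$-weighted norm and on edge patches rather than vertex patches. The key new ingredient compared with the unweighted theory is that the averaging defining $\wt\a_e(\vv)$ is taken over the \emph{weighted} sub-patch $\wt\om_e$ (where $\mu^{-1}$ is maximal), and the quasi-monotonicity in Assumption~\ref{asp:mnt} is precisely what lets me transport bounds from $\wt\om_e$ across the monotone element path back to an arbitrary $K\subset\om_{K,e}$ without picking up a factor depending on the coefficient jump.

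\medskip

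First I would record the scaling behavior of the local N\'{e}d\'{e}lec basis. By a Piola transform to a reference tetrahedron, one has $\norm{\bvphi_e}_{\vL^2(K)}\lesssim h_K^{-1/2}$ and $\norm{\curlt\bvphi_e}_{\vL^2(K)}\lesssim h_K^{-1/2}$ on a shape-regular mesh, together with $|\wt\om_e|\sim h_K^3$. Next I would prove the local approximation estimate on a single edge $e$ and its weighted patch $\wt\om_e$: since $\wt\om_e$ is a simply connected Lipschitz polyhedron (by the Remark) on which $\mu^{-1}$ is \emph{constant}, the weighted norm there is just a scalar multiple of the $\vL^2$ norm, so I can invoke the unweighted Clément/Poincar\'{e} estimate on $\wt\om_e$. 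Concretely I would write $\vv-\wt\sprod_h\vv=(\vv-\vc)-\wt\sprod_h(\vv-\vc)$ for a suitable constant vector $\vc$ (the patch average), use the stability of $\wt\sprod_h$ together with the trace/Bramble--Hilbert inequality on $\wt\om_e$, and arrive at a bound of the form $\norm{\mu^{-1/2}(\vv-\wt\sprod_h\vv)}_{\vL^2(K')}\le C\,h_K\norm{\mu^{-1/2}\nab\vv}_{\vL^2(\wt\om_e)}$ for $K'\subset\wt\om_e$. For the curl estimate I would use $\curlt\wt\sprod_h\vv$ being the elementwise-constant curl determined by the same edge averages, reducing it to the same $H^1$-seminorm bound on $\wt\om_e$ but without the extra factor $h_K$.

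\medskip

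The heart of the argument, and the step I expect to be the main obstacle, is \textbf{propagating the bound from the weighted patch $\wt\om_e$ to a general element $K\subset\om_e$ on which $\mu^{-1}$ may be much smaller}. Here I would follow \cite{Bernardi-Verfurth}: walk along the monotone element path $\om'_e=\bigcup_{i=1}^{l(K,e)}K_i$ guaranteed by Assumption~\ref{asp:mnt}, in which $\mu^{-1}_{K_{i-1}}\le\mu^{-1}_{K_i}$ and consecutive elements share a face. The difference $\wt\a_e(\vv)$ is one fixed number per edge, so on each $K_i$ the interpolant error telescopes; using a trace/inverse inequality across the shared faces of the path and the monotonicity $\mu^{-1}_{K}=\mu^{-1}_{K_0}\le\mu^{-1}_{K_1}\le\cdots$, the weight $\mu^{-1/2}_K$ on the left can be absorbed into the larger weights along the path, so no $\mu^{-1}_{\max}/\mu^{-1}_{\min}$ factor survives. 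The bounded length $l(K,e)$ of the path (bounded by shape regularity and the fixed number of elements in an edge patch) keeps the accumulated constant $C$ independent of both $h$ and the coefficients.

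\medskip

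Finally I would assemble the pieces: summing the local contributions over the few edges $e\in\cE_h(K)$ and over the elements traversed, and observing that the union of all the patches $\wt\om_e$, the monotone paths $\om'_e$, and $K$ itself is exactly the extended weighted patch $\wt\om_{K,e}$ defined in \eqref{eq:patch-Kewt}, yields the two stated inequalities with the right-hand side $\norm{\mu^{-1/2}\nab\vv}_{\vL^2(\wt\om_{K,e})}$. The boundary-edge case is handled by the homogeneous condition $\wt\a_e(\vv)=0$ for $e\in\cE_h(\p\Om)$ together with part~(ii) of Assumption~\ref{asp:mnt}, which ensures $\mathrm{meas}_2(\p\wt\om_e\cap\p\Om)>0$ so that a \emph{zero-boundary} Poincar\'{e}--Friedrichs inequality (rather than the mean-value version) is available on $\wt\om_e$, removing the need to subtract a patch average there.
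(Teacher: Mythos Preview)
Your overall strategy coincides with the paper's: split off a constant (the patch average), bound the discrepancy between the $K$-average and the $\wt\om_e$-average by telescoping along the monotone element path furnished by Assumption~\ref{asp:mnt}, and treat boundary edges with a Friedrichs inequality on $\wt\om_e$. Two points deserve correction, though.

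First, your basis scalings are wrong for the normalization used here. With the averaged degree of freedom $\a_e(\vv)=|e|^{-1}\int_e\vv\cdot\vt_e\,ds$, the basis satisfies $\bvphi_e\cdot\vt_e\big|_e=\pm1$, so $\norm{\bvphi_e}_{\vL^2(K)}\le C|K|^{1/2}\sim h_K^{3/2}$ (this is exactly what the paper uses in \eqref{eq:interp-partition}), not $h_K^{-1/2}$; and $\norm{\curlt\bvphi_e}_{\vL^2(K)}\sim h_K^{1/2}$. Your two claimed bounds are also mutually inconsistent with the inverse inequality. This is bookkeeping, but if you carry the wrong powers through you will not recover \eqref{eq:interp-estimate}.

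Second, and more substantively, you skip the step that makes the trace argument work across an interface. Since $\vv$ is only in ${P\!\vH}^1(\om_{K,e},\sP)$, a one-sided trace inequality on each neighbor of a shared face $F$ does not by itself let you compare averages on opposite sides. The paper's key observation is that for an edge $e\subset F$ with tangent $\vt_e$, the scalar $\vv\cdot\vt_e$ is continuous across $F$ (this is precisely the $\vH(\curl)$ tangential continuity), so $\vv\cdot\vt_e\in H^1(K\cup\wt{K}\cup F)$ for the adjacent $\wt K\subset\wt\om_e$. One then applies the trace inequality \eqref{eq:interp-trace} to $(\bar\vv_K-\vv)\cdot\vt_e$ on the $K$ side and to $(\vv-\bar\vv_{\wt\om_e})\cdot\vt_e$ on the $\wt\om_e$ side, obtaining \eqref{eq:interp-avgdiff}; the monotonicity $\mu_K^{-1}\le\mu_{\wt\om_e}^{-1}$ then absorbs the weight. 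Your phrase ``trace/inverse inequality across the shared faces'' would fail without invoking this tangential continuity, because for a generic component of a piecewise-$H^1$ field the two one-sided traces need not agree. The same continuity of $\vv\cdot\vt_e$ is what makes the boundary-edge Friedrichs step \eqref{eq:interp-approx-bd} work (one needs $\vv\cdot\vt_e=0$ on $\p\wt\om_e\cap\p\Om$, not $\vv=0$).
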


\begin{proof}
To establish the inequalities in \eqref{eq:interp-estimate}, let 
$\bar{\vv}_K$ and $\bar{\vv}_{\wt{\om}_e}$ be the averages of $\vv$ over $K$ and 
$\wt{\om}_e$ respectively, 
i.e.,  $\bar{\vv}_K = |K|^{-1} \int_{K} \vv \,d\vx$, and 
$\bar{\vv}_{\wt{\om}_e} = |\wt{\om}_e|^{-1} \int_{\wt{\om}_e} \vv \,d\vx$ for 
an interior edge $e$. Let $\bar{\vv}_{\wt{\om}_e} = \bm{0}$ if $e$ is a boundary 
edge. 

If $e$ is an interior edge, we have the following standard approximation 
property (also known as Poincar\'{e} inequality) 
thanks to the shape regularity of the triangulation $\cT_h$, simply-connectedness of 
$\wt{\om}_e$ for an interior edge $e$ from Assumption \ref{asp:mnt}, and 
$\vv \in {P\!\vH}^{1}(\om_{K,e},\sP)$:
\begin{equation}
\label{eq:interp-approx}
\begin{aligned}
\norm{\vv - \bar{\vv}_K}_{\vL^2(K)} \leq C h_K \norm{\nab \vv}_{\vL^2(K)},
\;\text{ and } \;
\norm{\vv - \bar{\vv}_{\wt{\om}_e}}_{\vL^2(\wt{\om}_e)} 
\leq C h_e \norm{\nab \vv}_{\vL^2(\wt{\om}_e)}.
\end{aligned}
\end{equation}

If $e$ is a boundary edge, the first inequality in \eqref{eq:interp-approx} 
still holds. To get an equality similar to the second one, the fact that 
$\vv\in \vHcrlz$ and $\vv\at{\om_{K,e}} \in {P\!\vH}^{1}(\om_{K,e},\sP)$ 
implies $\vv\cdot \vt_e\at{\wt{\om}_e} \in H^1(\wt{\om}_e)$, and 
$\vv\cdot \vt_e\at{\p \wt{\om}_e\cap \p \Om} = 0$. The following Friedrichs 
inequality holds (even if $\wt{\om}_e$ is not simply-connected as in the case 
of Figure \ref{fig:geom-7})
\begin{equation}
\label{eq:interp-approx-bd}
\norm{\vv\cdot \vt_e}_{L^2(\wt{\om}_e)} 
\leq C h_e \norm{\nab (\vv\cdot \vt_e)}_{L^2(\wt{\om}_e)} 
\leq C h_e \norm{\nab \vv}_{\vL^2(\wt{\om}_e)} .
\end{equation}

The starting point of the proof is to split the error we want to bound into parts. 
On any $K\in \cT_h$, it follows from the triangle inequality that
\[
\norm{\vv - \wt{\sprod}_h \vv}_{\vL^2(K)} \leq 
\norm{\vv - \bar{\vv}_K}_{\vL^2(K)}
+ \norm{\bar{\vv}_K - \wt{\sprod}_h \vv}_{\vL^2(K)}.
\]
The first term can be estimated using \eqref{eq:interp-approx} first 
inequality. For the second term, since $\wt{\sprod}_h \bar{\vv}_K= 
\bar{\vv}_K$ (with slightly abuse of notation we can extend $\bar{\vv}_K$ to whole 
edge patch by letting it be its value on $K$), we have the following partition on 
the element $K$ 
\[
\bar{\vv}_K - \wt{\sprod}_h \vv = \wt{\sprod}_h (\bar{\vv}_K - \vv) = 
\sum_{e\in \cE_h(K)} (\bar{\vv}_K - \bar{\vv}_{\wt{\om}_e}) 
\cdot \vt_e \,\bvphi_{e}.
\]
Now applying the triangle inequality, and using the fact that  
$\norm{\bvphi_e}_{\vL^2(K)} \leq C |K|^{1/2}$ 
(see the construction of $\bvphi_e$ in~\eqref{eq:basis-nd0}) yield
\begin{equation}
\label{eq:interp-partition}
\begin{aligned}
&\norm{\bar{\vv}_K - \wt{\sprod}_h \vv}_{\vL^2(K)} 
\leq  \sum_{e\in \cE_h(K)}\norm{(\bar{\vv}_K - \bar{\vv}_{\wt{\om}_e}) \cdot 
\vt_e \,\bvphi_{e}}_{\vL^2(K)} 
\\
=& 
\sum_{e\in \cE_h(K)} \abs{(\bar{\vv}_K - \bar{\vv}_{\wt{\om}_e}) \cdot \vt_e} 
\cdot \norm{\bvphi_{e}}_{\vL^2(K)} \leq 
C |K|^{1/2} 
\sum_{e\in \cE_h(K)}\abs{(\bar{\vv}_K - \bar{\vv}_{\wt{\om}_e}) \cdot \vt_e}.
\end{aligned}
\end{equation}

To establish the estimate for 
$\abs{(\bar{\vv}_K - \bar{\vv}_{\wt{\om}_e}) \cdot \vt_e}$ for each edge, we 
consider three cases. The first case is that when $K\subset \wt{\om}_e$, using 
the triangle inequality, the estimates in \eqref{eq:interp-approx} and 
\eqref{eq:interp-approx-bd} gives
\[
\begin{aligned}
&|K|^{1/2}\abs{(\bar{\vv}_K - \bar{\vv}_{\wt{\om}_e}) \cdot \vt_e}
= \norm{(\bar{\vv}_K - \bar{\vv}_{\wt{\om}_e}) \cdot \vt_e}_{L^2(K)}
\\[1mm]
\leq &  \norm{(\bar{\vv}_K - \vv)\cdot \vt_e}_{L^2(K)} 
+  \norm{(\vv - \bar{\vv}_{\wt{\om}_e})\cdot \vt_e}_{L^2(K)} 
\\
\leq & \norm{\bar{\vv}_K - \vv}_{\vL^2(K)} 
+ \norm{(\vv - \bar{\vv}_{\wt{\om}_e}) \cdot \vt_e}_{L^2(\wt{\om}_e)} 
\leq C  \frac{h_K}{\mu^{-1/2}_K} \norm{\mu^{-1/2} \nab \vv}_{\vL^2(\wt{\om}_e)}.
\end{aligned}
\]
Here the term in front of the last inequality is treated as
$\norm{(\vv - \bar{\vv}_{\wt{\om}_e}) \cdot \vt_e}_{L^2(\wt{\om}_e)}
\leq \norm{\vv - \bar{\vv}_{\wt{\om}_e}}_{\vL^2(\wt{\om}_e)}$ for an interior 
edge, and 
$\norm{(\vv - \bar{\vv}_{\wt{\om}_e}) \cdot \vt_e}_{L^2(\wt{\om}_e)} = 
\norm{\vv \cdot \vt_e}_{L^2(\wt{\om}_e)}$ for a boundary edge.

The second case is that when $K\not\subset \wt{\om}_e$, yet $K$ is adjacent to 
$\wt{\om}_e$, and we denote the face they share as $\p K\cap \p \wt{\om}_e = 
F$.  The fact that the tangential component of $\vv$ along the edge $e$ is 
continuous across the face $F$, and $\vv \in {P\!\vH}^{1}(\om_{K,e},\sP)$ implies 
that $\vv\cdot \vt_e \in H^1(K\cup \wt{\om}_e\cup F)$ 
(e.g. see \cite{Monk} Lemma 5.3). 
To establish the estimate, we need a standard trace inequality for 
$p\in H^1(K\cup \wt{\om}_e\cup F)$ (e.g. see \cite{Verfurth99} Lemma 3.2):
\begin{equation}
\label{eq:interp-trace}
\norm{p}_{L^2(F)} \leq C\left\{
h^{-1/2}_F \norm{p}_{L^2(K')} + h^{1/2}_F \norm{\nab p}_{L^2(K')}
\right\},
\end{equation}
where $K'$ can be either the element of interest $K$, or the element $\wt{K}$ 
as a subset of $\wt{\om}_e$ which is adjacent to $K$. 

Now it follows from the 
triangle inequality and shape regularity of the triangulation that
\begin{equation}
\label{eq:interp-keyterm}
\begin{aligned}
& |K|^{1/2} \abs{(\bar{\vv}_K - \bar{\vv}_{\wt{\om}_e}) \cdot \vt_e} 
= \frac{|K|^{1/2}}{|F|^{1/2}} 
\norm{(\bar{\vv}_K - \bar{\vv}_{\wt{\om}_e}) \cdot \vt_e}_{L^2(F)}
\\
\leq &\, C h_K^{1/2} \norm{(\bar{\vv}_K - \vv) \cdot \vt_e}_{L^2(F)}
+ C h_K^{1/2} \norm{(\vv - \bar{\vv}_{\wt{\om}_e}) \cdot \vt_e}_{L^2(F)}.
\end{aligned}
\end{equation}

The first term in the \eqref{eq:interp-keyterm} can be estimated using 
\eqref{eq:interp-trace} and then \eqref{eq:interp-approx} 
\[
\begin{aligned}
& h_K^{1/2} \norm{(\bar{\vv}_K - \vv) \cdot \vt_e}_{L^2(F)} 
\\[1mm]
\leq & \, C h_K^{1/2} \left\{
h^{-1/2}_F \norm{(\bar{\vv}_K - \vv) \cdot \vt_e}_{L^2(K)} 
+ h^{1/2}_F \norm{\nab (\vv\cdot \vt_e)}_{L^2(K)}
\right\}
\\[1mm]
\leq & \, C \left\{\norm{\bar{\vv}_K - \vv}_{\vL^2(K)} 
+ h_K \norm{\nab \vv}_{\vL^2(K)}
\right\} \leq C  h_K \norm{\nab \vv}_{\vL^2(K)}.
\end{aligned}
\]
For the second term 
$h_K^{1/2} \norm{(\vv - \bar{\vv}_{\wt{\om}_e}) \cdot \vt_e}_{L^2(F)}$ 
in \eqref{eq:interp-keyterm}, using the same argument yields a similar 
estimate, except passing the trace inequality from the face $F$ to the element 
$\wt{K} \subset \wt{\om}_e$ this time:
\[
h_K^{1/2} \norm{(\vv - \bar{\vv}_{\wt{\om}_e}) \cdot \vt_e}_{L^2(F)}
\leq C h_{\wt{K}} \norm{\nab \vv}_{\vL^2(\wt{\om}_e)}.
\]
Combining the two inequalities obtained above gives the following estimate
for any $e\in \cE_h(K)$ thanks to $\mu_K^{-1} \leq \mu_{\wt{\om}_e}^{-1}$:
\begin{equation}
\label{eq:interp-avgdiff}
|K|^{1/2} \abs{(\bar{\vv}_K - \bar{\vv}_{\wt{\om}_e}) \cdot \vt_e}
\leq 
C \frac{h_K}{\mu^{-1/2}_K} \norm{\mu^{-1/2} \nab \vv}_{\vL^2(\wt{\om}_e)}.
\end{equation}

The third case is that when $K\not\subset \wt{\om}_e$, nor does $K$ share a 
face with $\wt{\om}_e$. By Assumption~\ref{asp:mnt} 
there is a simply connected patch consisting of $K_1,\dots, K_{l(K,e)-1}$ 
along which the $\mu^{-1}$ is monotone. Separating the term of interest by 
triangle inequality:
\[
\abs{(\bar{\vv}_K - \bar{\vv}_{\wt{\om}_e}) \cdot \vt_e} 
\leq \abs{(\bar{\vv}_K - \bar{\vv}_{K_1}) \cdot \vt_e} 
+ \dots  
+ \abs{(\bar{\vv}_{K_{l(K,e)-1}} - \bar{\vv}_{\wt{\om}_e}) \cdot \vt_e},
\]
then each of the above terms can be proved yielding the same form of estimate 
in~\eqref{eq:interp-avgdiff} by the same argument. This result, together with 
the representation of $\norm{\bar{\vv}_K - \wt{\sprod}_h \vv}_{\vL^2(K)}$ in 
\eqref{eq:interp-partition}, implies the first estimate in 
\eqref{eq:interp-estimate}.

For the second estimate in \eqref{eq:interp-estimate}, using the inverse 
inequality, the triangle inequality, and $\wt{\sprod}_h \bar{\vv}_K= 
\bar{\vv}_K$ again, we have that
\[
\begin{aligned}
\norm{\curlt \wt{\sprod}_h \vv}_{\vL^2(K)} 
&= \norm{\curlt \wt{\sprod}_h (\vv -\bar{\vv}_K )}_{\vL^2(K)}
\leq C \,h_K^{-1} \norm{\wt{\sprod}_h (\vv - \bar{\vv}_K)}_{\vL^2(K)}
\\[1mm]
&\leq C \,h_K^{-1} \norm{\wt{\sprod}_h \vv - \vv}_{\vL^2(K)}
+ C \,h_K^{-1} \norm{\vv - \bar{\vv}_K}_{\vL^2(K)},
\end{aligned}
\]
which, together with the first estimate in \eqref{eq:interp-estimate} and 
\eqref{eq:interp-approx}, implies the second estimate. This completes the proof 
of the theorem.
\end{proof}

\begin{assumption}[Quasi-monotonicity of the $\b$ in a vertex patch]
\label{asp:mntb}
For any vertex $\vz\in \cN_h$, assume that the 
$\b$ satisfies the vertex patch quasi-monotonicity condition in 
{\em\cite{Petzoldt02}}: 
if $\vz$ is an interior vertex, for any $K\subset \om_{\vz}$, 
and $K'\subset \wt{\om}_{\vz}$, {\em (i)} there exist a 
collection of elements 
$\displaystyle\om'_{\vz}= \mcup_{i=1}^{l(K,\vz)} K_i\subset \om_{\vz}$ 
with $K_{l(K,\vz)}\subset \wt{\om}_{\vz}$, such that 
$K_{i}$ shares a face with $K_{i-1}$ and that $\b_{K_{i-1}}\le \b_{K_i}$ for 
all $i = 1,\dots, l(K,\vz)$, where $K_0 = K$ and. If $\vz$ is a vertex on the 
boundary, for every $K\subset \om_{\vz}\backslash \wt{\om}_{\vz}$, and  
every $K'\subset \wt{\om}_{\vz}$, {\em (ii)} assume that {\em (i)} holds, and 
the 2-dimensional Lebesgue measure 
$\mathrm{meas}_2(\p\wt{\om}_{\vz}\cap \p\Om) > 0 $.
\end{assumption}

If Assumption \ref{asp:mntb} is satisfied, the extended 
$\b$-weighted patch for an element $K$ is denoted as
\[
\wt{\om}_{K,\vz} = K \mcup_{\vz\in \cN_h(K)}  \om'_{\vz} 
\mcup_{\vz\in \cN_h(K)} \wt{\om}_{\vz},
\]

For the $\b$ which satisfies the vertex patch quasi-monotonicity in Assumption 
\ref{asp:mntb}, the robust  
Cl\'{e}ment-type interpolation for the linear Lagrange elements results are 
already established in \cite{Bernardi-Verfurth,Petzoldt02}. 
In the three dimensional setting, one reason to study the 
Cl\'{e}ment-type interpolation is that the standard linear 
Lagrange nodal interpolant may not be bounded. Unless extra regularity is 
assumed (e.g. the function to be interpolated is in $H^{3/2+\e}(\Om)$, see 
\cite{Monk}), the degrees of freedom for the Lagrange nodal interpolant may not 
be well-defined because $\Ho$ is not 
continuously embedded into the continuous function space. 

For any $\psi\in \Hoz$, let $\psi_h$ be the weighted Cl\'{e}ment-type 
interpolant of $\psi$ defined in \cite{Petzoldt02} associated with the 
coefficient $\b$. Under Assumption \ref{asp:mntb}, the $\psi_h$ has 
the following properties:
\begin{equation}
\label{eq:interp-estimate-h1}
\begin{aligned}
&\norm{\b^{1/2}(\psi-\psi_h(\vz))\lam_{\vz} }_{L^2(K)} 
\leq c_1 \,h_K \norm{\b^{1/2}\nab \psi}_{\vL^2(\wt{\om}_{K,\vz})},
\\[1mm]
\text{ and } & \norm{\b^{1/2}\nab(\psi-\psi_h) }_{\vL^2(K)} 
\leq c_2 \norm{\b^{1/2}\nab \psi}_{\vL^2(\wt{\om}_{K,\vz})},
\end{aligned}
\end{equation}
for any vertex $ \vz \in \cN_h(K)$. 

\begin{figure}[h]
\begin{center}

\begingroup
\captionsetup[subfigure]{labelsep=quad,margin={-2cm,0.1cm}}
\subfloat[Quasi-monotone in the sense of Assumption \ref{asp:mnt}, 
not in Assumption \ref{asp:mntb}.]
{\includegraphics[scale=1]{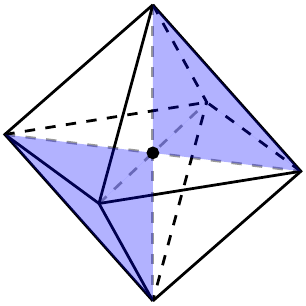}
\label{fig:geom-4}
}
\qquad \qquad \qquad
\subfloat[Not quasi-monotone in any sense.]
{\includegraphics[scale=1]{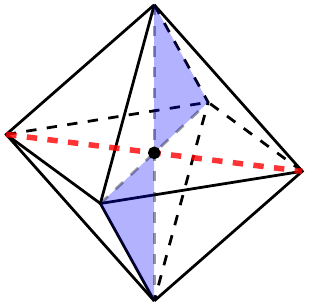}
\label{fig:geom-5}
}
\end{center}

\endgroup
\caption{Different scenarios of the coefficient distribution for $\b$ 
for an interior vertex patch $\om_{\vz}$. (a) $\b=100$ in the tetrahedra whose  
faces are marked blue, $\b=1$ in the rest tetrahedra in this patch. The coefficient 
distrubtion is quasi-monotone for all interior edges within this patch.
(b) $\b=100$ in the four tetrahedra sharing the blue faces,  
$\b=1$ in the rest tetrahedra in this patch. The quasi-monotonicity is violated for 
the red edges.}
\label{fig:geom-vertex}
\end{figure}

\begin{remark}
Assumption {\em \ref{asp:mnt}} does not exclude the case when 
$\ol{\Om} = \ol{\Om}_1\mcup \ol{\Om}_2$, and $\Om_1$ is a Lipschitz polyhedron 
touching the boundary at one vertex $V_1$ only, with $\mu^{-1}\at{\Om_1} =100$, 
and $\mu^{-1}\at{\Om_2} =1$. Assumption {\em \ref{asp:mntb}} 
prohibits the existence of this scenario. 
In this scenario, a robust Cl\'{e}ment-type 
interpolation cannot be achieved for nodal Lagrange elements. 
However, Assumption {\em \ref{asp:mnt}} allows this kind of domain, 
in which all the edges on the $\p\Om_1$ connecting the vertex $V_1$ is an 
interior edge of the triangulation. 
A robust Cl\'{e}ment-type interpolation using 
N\'{e}d\'{e}lec elements does exist in this scenario. Please refer to the 
illustration in Figure {\em \ref{fig:geom-8}}.
\end{remark}

\begin{remark}
Assumption {\em \ref{asp:mnt}} which states quasi-monotonicity for 
the edge patch, is weaker than Assumption {\em \ref{asp:mntb}} for 
the vertex patch. The reason is that Assumption 
{\em \ref{asp:mnt}} allows the checkerboard pattern for a vertex 
patch. However, this vertex patch checkerboard pattern is excluded in 
Assumption {\em \ref{asp:mntb}}. 
Please refer to the illustration in Figure {\em\ref{fig:geom-vertex}}. In 
Figure {\em\ref{fig:geom-4}}, the coefficient distribution satisfies Assumption 
{\em \ref{asp:mnt}} for any interior edges within this patch, 
yet Assumption {\em \ref{asp:mntb}} is not met.
\end{remark}

\subsection{Reliability}
Under the assumption on the distributions of the coefficients and the Helmholtz 
decomposition which is stable under the weighted norm, we prove 
the global reliability for the local recovery error estimator $\eta$ defined 
in~\eqref{eq:est-locavg}. 

For any vertex $\vz\in \cN_h\backslash \cN_h(\p\Om)$, 
denote by
\[
F_{\om_{\vz}}:= 
\dfrac{1}{|\om_{\vz}|}\int_{\om_{\vz}}\divv(\vf - \btau^*_h) d\vx
\]
the average of $\divv(\vf - \btau^*_h)$ over the vertex patch $\om_{\vz}$. 
For $\vz\in \cN_h(\p\Om)$, $F_{\om_{\vz}}:=0$. Let 
\[
\begin{gathered}
H = \left(\sum_{K\in\cT_h} \eta_{K,d}^2\right)^{1/2}
\quad\mbox{with}\quad \eta_{K,d}=
\b_K^{-1/2}h_K\norm{\divv(\vf - \btau^*_h)}_{L^2(K)}
\\[1mm]
\text{ and } \; \wt{H} = \left(\sum_{\vz\in \cN_h }\sum_{K\subset\om_{\vz}} 
\b_K^{-1}h_K^2
\norm{\divv(\vf - \btau^*_h)-F_{\om_{\vz}}}_{L^2(K)}^2\right)^{1/2}.
\end{gathered}
\]
The contribution from interior nodes in $\wt{H}$ is a higher order term since 
$\divv(\vf - \btau^*_h) \in \Lt$, and so is the contribution from boundary 
nodes if $\divv(\vf - \btau^*_h) \in L^p(\Om)$ for some $p>2$.
(see \cite{Carstensen-Verfurth}).

\begin{theorem}[Global Reliability of $\eta$]
\label{thm:rel}
Let $\vu$ and $\vu_h$ be the solutions of~\eqref{eq:pb-variational} 
and~\eqref{eq:pb-fem}, respectively. Under 
Assumption {\em\ref{asp:dcp-general}},
{\em\ref{asp:mnt}}, and {\em\ref{asp:mntb}} , 
there exists a constant $C> 0$ 
independent of the jumps of the coefficients such that
\begin{equation}
\label{eq:rel}
\enorm{\vu - \vu_h} \leq C\left(\eta + H\right).
\end{equation}
\end{theorem}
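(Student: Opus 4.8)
The plan is to bound the energy norm of the error $\ve := \vu - \vu_h$ by testing the residual against the two components of a weighted Helmholtz decomposition and exploiting Galerkin orthogonality against suitable interpolants. Since both $\vu$ and $\vu_h$ carry the tangential trace $\vg$ on $\p\Om$, the error satisfies $\ve\in\vHcrlz$, so Assumption~\ref{asp:dcp-general} applies: I would write $\ve = \vw + \nab\psi$ with $\psi\in\Hoz$, $\vw\in\vPH{1}\cap\vX(\Om,\b)$, and the stability bound \eqref{eq:dcp-estimate}. Then
\[
\enorm{\ve}^2 = \cA(\ve,\ve) = \cA(\ve,\vw) + \cA(\ve,\nab\psi),
\]
and I would estimate the ``curl part'' $\cA(\ve,\vw)$ and the ``gradient part'' $\cA(\ve,\nab\psi)$ separately. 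The recurring mechanism is Galerkin orthogonality, $\cA(\ve,\vv_h)=0$ for every $\vv_h\in\ND_0\cap\vHcrlz$, which I would invoke with $\vv_h=\wt{\sprod}_h\vw$ (a valid $\ND_0\cap\vHcrlz$ field by Definition~\ref{def:interp-weighted}, since the boundary-edge coefficients vanish) and with $\vv_h=\nab\psi_h$ (the gradient of the piecewise-linear Cl\'ement interpolant $\psi_h\in\Hoz$ lies in $\ND_0$).

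For the curl part, I would replace $\vw$ by $\vw-\wt{\sprod}_h\vw$ using orthogonality, expand $\cA(\vu_h,\cdot)$ in terms of $\bsig_h=\mu^{-1}\curlt\vu_h$ and $\btau_h=\b\vu_h$, insert the recovered $\bsig^*_h$, and integrate by parts. Because $(\vw-\wt{\sprod}_h\vw)\cross\vn=0$ on $\p\Om$ and $\bsig^*_h\in\vHcrl$, the boundary term vanishes and one arrives at
\[
\cA(\ve,\vw) = \int_{\Om}\bigl(\vf-\b\vu_h-\curlt\bsig^*_h\bigr)\cdot(\vw-\wt{\sprod}_h\vw)\,d\vx - \int_{\Om}\bigl(\bsig_h-\bsig^*_h\bigr)\cdot\curlt(\vw-\wt{\sprod}_h\vw)\,d\vx .
\]
The first integrand is exactly the recovery-type element residual, while $\mu_K^{1/2}(\bsig^*_h-\bsig_h)$ is the integrand of $\eta_{K,\perp}$. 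Applying Cauchy--Schwarz elementwise together with the two weighted bounds of Theorem~\ref{thm:interp-weighted} makes the powers of $\mu_K$ cancel, so the first term is controlled by $\eta_R$ and the second by $\eta_{\perp}$; a finite-overlap count over the patches $\wt{\om}_{K,e}$ then yields $|\cA(\ve,\vw)|\le C(\eta_R+\eta_{\perp})\sum_{j}\norm{\mu^{-1/2}\nab\vw}_{\vL^2(\Om_j)}$.

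For the gradient part, $\curlt\nab\psi=0$ reduces $\cA(\ve,\nab\psi)$ to $(\vf-\b\vu_h,\nab\psi)$, and orthogonality lets me replace $\psi$ by $\psi-\psi_h$. Splitting $\b\vu_h=\btau^*_h+(\btau_h-\btau^*_h)$ and integrating by parts the $\btau^*_h$ contribution (legitimate since $\vf,\btau^*_h\in\vHdiv$ and $\psi-\psi_h\in\Hoz$) gives
\[
\cA(\ve,\nab\psi) = -\int_{\Om}\divv(\vf-\btau^*_h)\,(\psi-\psi_h)\,d\vx + \int_{\Om}(\btau^*_h-\btau_h)\cdot\nab(\psi-\psi_h)\,d\vx .
\]
Using the weighted Cl\'ement bounds \eqref{eq:interp-estimate-h1}, the $\b_K$ weights again cancel, so the second term is bounded by $\eta_0\,\norm{\b^{1/2}\nab\psi}_{\vLt}$ and the first by $H\,\norm{\b^{1/2}\nab\psi}_{\vLt}$ (subtracting the patch averages $F_{\om_{\vz}}$ and using a Poincar\'e inequality on each $\om_{\vz}$ would sharpen $H$ to the higher-order quantity $\wt H$). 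Combining the two parts and inserting the Helmholtz stability bound \eqref{eq:dcp-estimate},
\[
\enorm{\ve}^2 \le C\bigl(\eta_R+\eta_{\perp}+\eta_0+H\bigr)\,\enorm{\ve},
\]
and dividing by $\enorm{\ve}$ gives \eqref{eq:rel}.

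The hard part is not the algebra but maintaining \emph{robustness}: every constant must be independent of the coefficient jumps. This is secured precisely by the weighted structure of the estimator—the factors $\mu_K^{1/2}h_K$, $\mu^{1/2}$ and $\b^{-1/2}$ in $\eta_{K,R}$, $\eta_{K,\perp}$ and $\eta_{K,0}$ are chosen so that the powers of $\mu_K$ and $\b_K$ cancel against those produced by the coefficient-weighted interpolation estimates of Theorem~\ref{thm:interp-weighted} and \eqref{eq:interp-estimate-h1}. Accordingly, the quasi-monotonicity Assumptions~\ref{asp:mnt} and~\ref{asp:mntb} (which make those weighted interpolation bounds hold) together with the robust decomposition of Assumption~\ref{asp:dcp-general} are exactly what the argument consumes; checking that the two boundary terms vanish and that the patches $\wt{\om}_{K,e}$, $\wt{\om}_{K,\vz}$ overlap only finitely are the remaining routine points.
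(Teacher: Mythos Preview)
Your proof is correct and follows essentially the same route as the paper's: Helmholtz-decompose the error via Assumption~\ref{asp:dcp-general}, subtract the weighted Cl\'ement-type interpolants $\wt{\sprod}_h\vw$ and $\nab\psi_h$ using Galerkin orthogonality, insert the recovered quantities $\bsig^*_h$ and $\btau^*_h$, integrate by parts, and close with Cauchy--Schwarz, the weighted interpolation estimates of Theorem~\ref{thm:interp-weighted} and~\eqref{eq:interp-estimate-h1}, finite overlap of the patches, and the stability bound~\eqref{eq:dcp-estimate}. The only cosmetic difference is that the paper phrases everything through the residual functional $R(\cdot)$ rather than $\cA(\ve,\cdot)$, and it presents the gradient part before the curl part.
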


\begin{proof}
Denote the error and the residual by 
\[
\ve = \vu - \vu_h
\quad\mbox{and}\quad
R(\vv) = (\vf,\vv) - (\mu^{-1}\curlt \vu_h, \curlt\vv) 
- (\b \vu_h,\vv),
\]
respectively. It is easy to see that 
 \[
\cA(\ve,\vv) = R(\vv), \,\, \forall \vv\in \vHcrlz
\;\text{ and } \;
R(\vv_h) = 0,\; \forall \vv_h\in \ND_0\cap \vHcrlz.
\]
By Assumption~\ref{asp:dcp-general}, there 
exists a decomposition of the error $\ve\in \vHcrlz$ into the sum of $\psi \in 
\Hoz$ and $\vw \in \vPH{1}\cap \vX(\Om,\b)$ such that
\[ 
\ve =\vw + \nab \psi \quad \text{and} \quad 
\enorm{\ve}^2= R(\ve) =  R(\vw)+ R(\nab \psi).
\]


To bound the curl-free part of the error, 
let $\psi_h$ be the weighted Cl\'{e}ment-type interpolant of $\psi$ defined in 
\cite{Petzoldt02} associated with the coefficient $\b$. It follows from the 
fact that $R(\nab \psi_h) = 0$, integration by parts, 
the Cauchy-Schwarz inequality, the approximation and stability of the 
interpolation \eqref{eq:interp-estimate-h1}, and \eqref{eq:dcp-estimate} 
that 
\[
\begin{aligned}
R(\nab \psi) &= R\big(\nab(\psi -\psi_h)\big) 
= \binprod{\vf-\btau^*_h}{\nab(\psi -\psi_h)} 
+ \binprod{\btau^*_h - \b \vu_h}{\nab(\psi -\psi_h)}
\\[1mm]
&=-\binprod{\divv(\vf-\btau^*_h)}{\psi -\psi_h}
+ \binprod{\btau^*_h - \b \vu_h}{\nab(\psi -\psi_h)}
\\[1mm]
& \leq \sum_{K\in\cT_h}  \left(
 \eta_{K,d} \,h^{-1}_K\norm{\b^{1/2}(\psi -\psi_h)}_{L^2(\om_{K,\vz})}
+ \eta_{K,0}\, \norm{\b^{1/2}\nab(\psi -\psi_h)}_{\vL^2(\om_{K,\vz})}
\right)
\\[1mm]
& \leq C\sum_{K\in\cT_h} \left(\eta_{K,d} + \eta_{K,0}\right)
\norm{\b^{1/2}\nab\psi}_{\vL^2(\om_{K,\vz})}
\leq C \left(H +\eta_0\right) \norm{\b^{1/2}\nab\psi}_{\vLt}
\\[1mm]
&\leq C \left(H +\eta_0\right) \norm{\b^{1/2}\ve}_{\vLt}.
\end{aligned}
\]

To bound $R(\vw)$, let $\vw_h = \wt{\sprod}_h \vw\in \ND_0\cap \vHcrlz$ with 
$\wt{\sprod}_h$ defined in Definition \ref{def:interp-weighted}. Using the fact that 
$R(\vw_h) =0$ and integrating by parts give
\[
\begin{aligned}
& R(\vw)= R(\vw - \vw_h) 
\\[1mm]
&= (\vf - \b \vu_h,\vw - \vw_h) 
- \binprod{\bsig^*_h}{\curlt(\vw - \vw_h)}
+ \binprod{\bsig^*_h - \mu^{-1}\curlt \vu_h}{\curlt(\vw - \vw_h)}
\\[1mm]
&= \binprod{\vf - \b \vu_h - \curlt \bsig^*_h}{\vw - \vw_h} 
+\binprod{\bsig^*_h - \mu^{-1}\curlt \vu_h}{\curlt(\vw - \vw_h)}.
\end{aligned}
\]
Now, by the Cauchy-Schwarz inequality, the second and third inequalities in 
\eqref{eq:interp-estimate} ,
and \eqref{eq:dcp-estimate},
we have
\[
\begin{aligned}
R(\vw)
& \leq \sum_{K\in\cT_h}\left(
\eta_{K,R}\,h^{-1}_K\norm{\mu^{-1/2}(\vw - \vw_h)}_{\vL^2(K)} 
+\eta_{K,\perp} \norm{\mu^{-1/2}\curlt(\vw - \vw_h)}_{\vL^2(K)}
\right)
\\[1mm]
&\leq C\sum_{K\in\cT_h}\left(
 \eta_{K,R} + \eta_{K,\perp}\right)
 \norm{\mu^{-1/2}\nab \vw}_{\vL^2(\om_K)}
\leq  C\left(
 \eta_{R} + \eta_{\perp}\right)\norm{\mu^{-1/2}\nab \vw}_{\vLt}
 \\[1mm]
&\leq C\left(
 \eta_{R} + \eta_{\perp}\right)
\norm{\mu^{-1/2} \curlt \ve}_{\vLt}.
\end{aligned}
\]
Combining the above two inequalities, we have
\[
\enorm{\ve}^2= R(\ve) \leq C\,\big( \eta + H \big)\enorm{\ve},
\]
which implies \eqref{eq:rel}. This completes the proof of the theorem.
\end{proof}

In the remainder of this section, 
we assume that additionally $\divv \vf \in L^p(\Om)$ for some $p>2$, 
then the $H$ in~\eqref{eq:rel} may be replaced by 
$\wt{H}$ which is a higher order term.

\begin{theorem}[Global Reliability of $\eta$]
\label{thm:rel-hot}
Under Assumption {\em\ref{asp:coeff}}, {\em\ref{asp:mnt}},
and {\em\ref{asp:mntb}}, there exists a constant $C> 0$ 
independent of the jumps of the coefficients such that
\begin{equation}
\label{eq:rel-hot}
\enorm{\vu - \vu_h} \leq C\left(\eta + \wt{H}\right).
\end{equation}
\end{theorem}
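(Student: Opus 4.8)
The plan is to reuse the proof of Theorem~\ref{thm:rel} almost verbatim and to sharpen only the single step in which the volume residual $\divv(\vf-\btau^*_h)$ is estimated. Recall that, after applying the weighted Helmholtz decomposition $\ve=\vw+\nab\psi$ together with $R(\nab\psi_h)=0$ and integration by parts, the curl-free part of the error was written as
\[
R(\nab\psi)=-\binprod{\divv(\vf-\btau^*_h)}{\psi-\psi_h}+\binprod{\btau^*_h-\b\vu_h}{\nab(\psi-\psi_h)},
\]
and it was the first of these two terms that produced $H$. The second term (which yields $\eta_0$) and the entire estimate of $R(\vw)$ (which yields $\eta_R+\eta_\perp$) are left untouched, so I would only replace the bound on the first term by
\[
-\binprod{\divv(\vf-\btau^*_h)}{\psi-\psi_h}\le C\,\wt{H}\,\norm{\b^{1/2}\nab\psi}_{\vLt}\le C\,\wt{H}\,\enorm{\ve}.
\]
Once this is in hand, combining it with the unchanged estimates gives $\enorm{\ve}^2=R(\ve)\le C(\eta+\wt{H})\enorm{\ve}$, which is exactly~\eqref{eq:rel-hot}.

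To obtain the sharper bound I would localize with the nodal partition of unity. Since $\sum_{\vz}\lam_{\vz}\equiv1$ and $\psi_h=\sum_{\vz}\psi_h(\vz)\lam_{\vz}$, one has $\psi-\psi_h=\sum_{\vz\in\cN_h}(\psi-\psi_h(\vz))\lam_{\vz}$, and therefore
\[
-\binprod{\divv(\vf-\btau^*_h)}{\psi-\psi_h}=-\sum_{\vz\in\cN_h}\int_{\om_{\vz}}\big(\divv(\vf-\btau^*_h)-F_{\om_{\vz}}\big)(\psi-\psi_h(\vz))\lam_{\vz}\,d\vx.
\]
For an interior vertex the insertion of the patch average $F_{\om_{\vz}}$ is free because the correction $F_{\om_{\vz}}\int_{\om_{\vz}}(\psi-\psi_h(\vz))\lam_{\vz}\,d\vx$ vanishes by the mean-value (moment) property of the weighted Cl\'ement interpolant, while for a boundary vertex nothing is subtracted since $F_{\om_{\vz}}=0$ by definition. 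Applying the Cauchy--Schwarz inequality on each element, then the approximation estimate~\eqref{eq:interp-estimate-h1} in the form $\norm{(\psi-\psi_h(\vz))\lam_{\vz}}_{L^2(K)}\le c_1\,\b_K^{-1/2}h_K\norm{\b^{1/2}\nab\psi}_{\vL^2(\wt{\om}_{K,\vz})}$, and finally a second Cauchy--Schwarz over the double sum, produces the product of $\wt{H}$ with $\big(\sum_{\vz}\sum_{K\subset\om_{\vz}}\norm{\b^{1/2}\nab\psi}_{\vL^2(\wt{\om}_{K,\vz})}^2\big)^{1/2}$; the second factor is bounded by $C\norm{\b^{1/2}\nab\psi}_{\vLt}$ thanks to the finite overlap of the patches $\wt{\om}_{K,\vz}$ under shape regularity, and $\norm{\b^{1/2}\nab\psi}_{\vLt}\le\norm{\b^{1/2}\ve}_{\vLt}\le\enorm{\ve}$ exactly as in Theorem~\ref{thm:rel}.

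The step I expect to be the crux is justifying the free insertion of $F_{\om_{\vz}}$ on interior patches in a way that is robust in $\b$: this replacement of the averaged residual by its patch-wise oscillation is precisely the mechanism that converts $H$ into the higher-order quantity $\wt{H}$, and it hinges on the weighted quasi-interpolant reproducing the appropriate mean on each vertex patch so that the constant $F_{\om_{\vz}}$ can be absorbed without disturbing the coefficient-independence of $C$. The additional hypothesis $\divv\vf\in L^p(\Om)$ with $p>2$ is what ultimately renders both the interior oscillations and the (non-subtracted) boundary-patch terms appearing in $\wt{H}$ genuinely higher order, but this is a statement about the magnitude of $\wt{H}$ rather than a step in the reliability estimate itself.
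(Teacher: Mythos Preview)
Your proposal is correct and follows essentially the same route as the paper: both reuse the proof of Theorem~\ref{thm:rel}, localize $-\binprod{\divv(\vf-\btau^*_h)}{\psi-\psi_h}$ via the partition of unity $\sum_{\vz}\lam_{\vz}=1$, and insert the patch average $F_{\om_{\vz}}$ on interior patches using the orthogonality $\binprod{1}{(\psi-\psi_h(\vz))\lam_{\vz}}_{\om_{\vz}}=0$ of the weighted Cl\'ement interpolant (cf.~\cite{Cai09}), then finish with Cauchy--Schwarz and~\eqref{eq:interp-estimate-h1}. The only cosmetic difference is that the paper cites the orthogonality directly rather than phrasing it as a ``moment property,'' and it bounds by $\norm{\b^{1/2}\ve}_{\vLt}$ in one step instead of going through $\norm{\b^{1/2}\nab\psi}_{\vLt}$ first.
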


\begin{proof}
In the proof of \eqref{eq:rel}, 
if furthermore the following orthogonality condition is exploited on the vertex 
patch for the weighted Cl\'{e}ment-type interpolant (e.g. see \cite{Cai09} Section 4)
\[
\binprod{1}{(\psi - \psi_h(\vz))\lam_{\vz}}_{\om_{\vz}} = 0,\quad\forall\,\, 
\vz\in\cN_h\backslash \cN_h(\p \Om),
\]
together with the fact that $F_{\om_{\vz}}=0$ and $\psi_h(\vz)=0$ for $\vz\in 
\cN_h(\p\Om)$, it implies
\[
\begin{aligned}
&\binprod{\vf-\btau^*_h}{\nab(\psi -\psi_h)} 
=  -\sum_{K\in\cT_h}\binprod{\divv(\vf-\btau^*_h)}{\psi -\psi_h}_K
\\
&\qquad= -\sum_{\vz\in \cN_h} \sum_{K\subset\om_{\vz}} 
\binprod{\divv(\vf-\btau^*_h)-F_{\om_{\vz}}}{(\psi -\psi_h(\vz))\lam_{\vz}}_K.
\end{aligned}
\]
Now, a similar argument as in the irrotational part 
proof of \eqref{eq:rel} gives
\[
\binprod{\vf-\btau^*_h}{\nab(\psi -\psi_h)} 
\leq C \,\wt{H}\norm{\b^{1/2}\ve}_{\vLt}.
\]
The rest of the proof for (\ref{eq:rel-hot}) is identical to that of 
\eqref{eq:rel}.
\end{proof}

\subsection{Efficiency}
Even though in \cite{Beck-Hiptmair-Hoppe-Wohlmuth}, the coefficients are assumed to 
be continuous, the proof they used to prove the 
efficiency bound (Section 4 and 5 in \cite{Beck-Hiptmair-Hoppe-Wohlmuth}) carries 
over to piecewise constant coefficients. At the same time, their choice of weight 
yields a robust bound with no dependence on the coefficients. In this subsection, we 
prove the efficiency of the recovery-based estimator \eqref{eq:est-locavg} by 
bounding the recovery-based local error estimator by the residual-based local error 
estimator.

Let $\vf_h$ be the standard $\vL^2$-projection 
onto $\BDM_1$. It is proved in \cite{Beck-Hiptmair-Hoppe-Wohlmuth} 
that there exists a positive constant $C$ such that:
\begin{equation}
\label{eq:eff-residual}
\begin{aligned}
& C h_F \norm{\mu_F^{1/2} 
\jump{(\mu^{-1} \curlt \vu_h)\cross \vn}{F}}_{\vL^2(F)}^2 
 \leq  
\enorm{\vu - \vu_h}_{\om_{F}}^2\!\!
+\!\! \sum_{K\subset\om_{F}}\!\!
\mu_K h_K^2\norm{\vf-\vf_h}_{\vL^2(K)}^2,
\\[2mm]
& C h_F
\norm{\b_F^{-1/2}  \jump{\b \,\vu_h\cdot \vn}{F}}_{L^2(F)}^2
\leq 
\norm{\b^{1/2} (\vu - \vu_h)}_{\vL^2(\om_F)}^2\!\!\!
+\!\!\! \sum_{K\subset\om_F} \!\!
\b_K^{-1} h_K^2\norm{\divv\vf}_{L^2(K)}^2,
\\[2mm]
& \text{and }\;C\,\mu_K^{1/2} h_K
\norm{\vf - \b \vu_h - \curlt (\mu^{-1} \curlt \vu_h)}_{\vL^2(K)}
\leq \enorm{\vu-\vu_h}_K, 
\end{aligned}
\end{equation}
where the coefficients $\mu_F^{-1}$ and $\b_F$ on face $F$ are given by the 
arithmetic averages of $\mu^{-1}$ and $\b$ 
\[
\mu_F^{-1} = (\mu_{K_-}^{-1}+\mu_{K_+}^{-1})/2, 
\text{ and } 
\b_F = \big(\b_{K_-} + \b_{K_+}\big)/2,
\] 
respectively. Next we move on to prove the equivalence.
 
\begin{lemma}[Equivalence of ${\eta}_{K,0}$]
\label{lem:eff-0}
There exists a constant $c > 0$ independent of the 
jumps of the coefficients such that for any $K \in \cT_h$:
\begin{equation}
\label{eq:eff-0}
c\,{\eta}_{K,0} \leq  
\sum_{F\subset \p K} h_F^{1/2}
\norm{\b_F^{-1/2} \jump{\b \vu_h\cdot \vn}{F}}_{L^2(F)}.
\end{equation}
\end{lemma}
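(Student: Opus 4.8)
The plan is to reduce $\eta_{K,0}$ to the weighted normal jumps of $\btau_h:=\b\vu_h$ across the faces of $K$, using the fact that $\btau_h$ is reproduced exactly by the local $\BDM_1$ degrees of freedom. Since $\b$ is piecewise constant, I first rewrite
\[
\eta_{K,0}=\norm{\b^{-1/2}\btau^*_h-\b^{1/2}\vu_h}_{\vL^2(K)}
=\b_K^{-1/2}\norm{\btau^*_h-\btau_h}_{\vL^2(K)},
\]
and observe that $\btau_h\at{K}=\b_K\,\vu_h\at{K}\in\vP_1(K)=\BDM_1(K)$. Hence $\btau_h\at{K}$ coincides with the $\BDM_1$ function prescribed by its own face moments, and by the $\vHdiv$-conformity of $\bpsi_{F,\vz}$ (which has vanishing normal trace on every face of $K$ other than $F$) only the faces $F\subset\p K$ contribute to $\btau^*_h\at{K}$. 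Subtracting the two local representations then gives the clean identity
\[
(\btau^*_h-\btau_h)\at{K}=\sum_{F\subset\p K}\sum_{\vz\in\cN_h(F)}
\left(\frac1{|F|}\int_F\big((\btau_{h,F}-\btau_{h,K})\cdot\vn_F\big)\lam_{\vz}\,dS\right)\bpsi_{F,\vz},
\]
where $\btau_{h,K}=\btau_h\at{K}$ is the one-sided trace from inside $K$.

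Next I would identify the degree-of-freedom difference with a weighted normal jump. Writing $K=K_-$ (the case $K=K_+$ is symmetric) and using $\k^-_F+\k^+_F=1$ in the averaging formula \eqref{eq:rec-favg} gives $(\btau_{h,F}-\btau_{h,K_-})\cdot\vn_F=-\k^+_F\jump{\btau_h\cdot\vn}{F}=-\k^+_F\jump{\b\vu_h\cdot\vn}{F}$. Applying the Cauchy--Schwarz inequality and $\lam_{\vz}\le1$ bounds each coefficient by $C\,\k^+_F\,|F|^{-1/2}\norm{\jump{\b\vu_h\cdot\vn}{F}}_{L^2(F)}$, while the explicit form \eqref{eq:basis-bdm} shows $\bpsi_{F,\vz}$ is $O(1)$ pointwise on a shape-regular $K$, so $\norm{\bpsi_{F,\vz}}_{\vL^2(K)}\le C|K|^{1/2}$. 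The triangle inequality together with $|K|^{1/2}/|F|^{1/2}\sim h_F^{1/2}$ then yields
\[
\b_K^{-1/2}\norm{\btau^*_h-\btau_h}_{\vL^2(K)}
\le C\sum_{F\subset\p K}\b_K^{-1/2}\,\k^+_F\,h_F^{1/2}
\norm{\jump{\b\vu_h\cdot\vn}{F}}_{L^2(F)}.
\]

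The main obstacle, and the only place robustness in the coefficient jumps is at stake, is to absorb the weight $\b_K^{-1/2}\k^+_F$ into $\b_F^{-1/2}$ by a constant independent of $\b_{K_-}/\b_{K_+}$. This is exactly what the square-root weighting in \eqref{eq:rec-wt} is designed to achieve: with $\b_K=\b_{K_-}$,
\[
\b_{K_-}^{-1/2}\k^+_F=\frac{1}{\b_{K_-}^{1/2}+\b_{K_+}^{1/2}}
\le\frac{\sqrt2}{(\b_{K_-}+\b_{K_+})^{1/2}}=\b_F^{-1/2},
\]
where I used the elementary inequality $(\b_{K_-}+\b_{K_+})^{1/2}\le\b_{K_-}^{1/2}+\b_{K_+}^{1/2}$ and the definition $\b_F=(\b_{K_-}+\b_{K_+})/2$. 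The resulting constant is absolute, which converts the previous display into the claimed bound \eqref{eq:eff-0}. For a boundary face $F$ I would invoke the boundary conventions $\k^+_F=1$, $\k^-_F=0$ from \eqref{eq:rec-wt} and the zero extension of $\btau_h$, under which $(\btau_{h,F}-\btau_{h,K_-})\cdot\vn_F=-\jump{\b\vu_h\cdot\vn}{F}$ and $\b_K^{-1/2}\le C\,\b_F^{-1/2}$, so the entire chain of estimates applies verbatim and completes the proof.
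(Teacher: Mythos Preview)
Your proof is correct and follows essentially the same route as the paper's: both use the $\BDM_1$ representation of $\btau_h\at{K}$ to write $\btau^*_h-\btau_h$ as a sum of face contributions, identify each degree-of-freedom difference with the weighted normal jump via the averaging formula, and then apply the key inequality $\b_K^{-1/2}(1-\k_F^K)=1/(\b_{K_-}^{1/2}+\b_{K_+}^{1/2})\le\b_F^{-1/2}$ together with the scaling $\norm{\bpsi_{F,\vz}}_{\vL^2(K)}\le C|K|^{1/2}$. The only difference is cosmetic---the paper uses the notation $(1-\k_F^K)$ where you write $\k^+_F$, and it dismisses the boundary-face contribution in one sentence rather than bounding it explicitly as you do.
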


\begin{proof}
it suffices to show that ${\eta}_{K,0}$ can be bounded by the summation of the 
residual-based estimator within the local face patch. 

For any interior element $K$, we first use a partition of unity to bound the 
estimator ${\eta}_{K,0}$ by the fact that $\ND_0(K)\subset \BDM_1(K)$.
The difference of the weighted average $\btau^*_h$ and $\btau_h$ is
\[
\begin{aligned}
(\btau^*_h - \btau_h)\at{K} 
&= \sum_{F\subset \p K\backslash \p \Om} \sum_{\vz \in \cN_h(F)}
\Bigg(\frac{1}{|F|}\int_{F} (\btau_{h,F} - \btau_{h,K})\cdot \vn_F 
\lam_{\vz}\,dS\Bigg) 
\,\bpsi_{F,\vz}
\\
&= \sum_{F\subset \p K\backslash \p \Om}\, 
\sum_{\vz\subset \cN_h(F)} \dfrac{1 - \k_{F}^{K}} {|F|}
\left(\int_F\jump{\b \vu_h\cdot \vn}{F}\lam_{\vz}\,dS\right) \,\bpsi_{F,\vz}.
\end{aligned}
\]
Recalling from \eqref{eq:rec-wt} that on each face $F$ of element $K$, 
$\k_{F}^{K} = \b^{1/2}_{K'}/\big( {\b^{1/2}_{K} + \b^{1/2}_{K'}}\big)$, where $K'$ 
is 
the neighboring element sharing this fixed face $F$ with $K$. Since
\[
\norm{\lam_{\vz}}_{L^2(F)}\leq C\,|F|^{\frac{1}{2}}
\quad\mbox{and}\quad
\norm{\bpsi_{F,\vz}}_{\vL^2(K)}
\leq C\, |K|^{\frac{1}{2}},
\]
and using the following coefficient weight relation 
\eqref{eq:rec-wt} on each face $F$:
\[
(1 - \k_{F}^{K})\b^{-1/2}_K = \frac{1}{\b^{1/2}_{K} + \b^{1/2}_{K'}} 
\leq \frac{\sqrt{2}}{(\b_{K} + \b_{K'})^{1/2}}=\frac{1}{\b_F^{1/2}},
\]
the local error indicator ${\eta}_{K,0}$ has the following bound:
\[
\begin{aligned}
{\eta}_{K,0}
&= \norm{\b^{-1/2} \btau^*_h - \b^{1/2}\vu_h }_{\vL^2(K)} 
= \norm{\b^{-1/2} (\btau^*_h -\btau_h)}_{\vL^2(K)}
\\[2mm]
\leq &\sum_{F\subset \p K\backslash \p \Om}\, \sum_{\vz\subset \cN_h(F)} 
\frac{1}{2|F|\b_F^{1/2}}
\abs{\int_F\jump{\b \vu_h\cdot \vn}{F}\lam_{\vz}\,dS} 
\norm{\bpsi_{F,\vz}}_{\vL^2(K)}
\\[2mm]
\leq & 
\sum_{F\subset \p K\backslash \p \Om} C \left(\frac{|K|}{|F|}\right)^{1/2}
\norm{\b_F^{-1/2} \jump{\b \vu_h\cdot \vn}{F}}_{L^2(F)}.
\end{aligned}
\]
For any element with a boundary face, thanks to the setting for problem 
\eqref{eq:pb-fem}, that the Dirichlet data can be exactly represented by an $\ND_0$ 
vector field's tangential trace, the degrees of freedom on any boundary face do not 
contribute to the approximation error in that element. This completes the proof of 
the lemma.
\end{proof}

\begin{lemma}[Equivalence of ${\eta}_{K,\perp}$]
\label{lem:eff-p}
Under Assumption \ref{asp:mnt}, there exists a constant $c > 0$ independent of the 
jumps of the coefficients such that for any $K \in \cT_h$
\begin{equation}
\label{eq:eff-p}
c\,{\eta}_{K,\perp} \leq \sum_{e\in \cE_h(K)} 
\sum_{F\subset \om_{e,F}}h_F^{1/2} 
\norm{\mu_F^{1/2} \jump{(\mu^{-1} \curlt \vu_h)\cross \vn}{F}}_{\vL^2(F)}.
\end{equation}
\end{lemma}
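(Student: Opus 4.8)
The plan is to mirror the argument for $\eta_{K,0}$ in Lemma~\ref{lem:eff-0}, the essential changes being that the relevant continuity is now tangential rather than normal, and that the averaging takes place on an edge patch instead of a face patch. First I would use that $\mu$ is constant on $K$ to write
\[
\eta_{K,\perp}=\norm{\mu^{1/2}\bsig^*_h-\mu^{-1/2}\curlt\vu_h}_{\vL^2(K)}
=\mu_K^{1/2}\norm{\bsig^*_h-\bsig_h}_{\vL^2(K)},
\]
where $\bsig_h=\mu^{-1}\curlt\vu_h$ is a \emph{constant} vector on $K$. Since the lowest order N\'ed\'elec interpolant reproduces constants, $\bsig_h\at{K}=\sum_{e\in\cE_h(K)}(\bsig_{h,K}\cdot\vt_e)\,\bvphi_e$ with $\bsig_{h,K}=\bsig_h\at{K}$, and subtracting this from the local expansion of $\bsig^*_h$ in \eqref{eq:rec-local} gives
\[
(\bsig^*_h-\bsig_h)\at{K}=\sum_{e\in\cE_h(K)}\bigl(\a_e-\bsig_{h,K}\cdot\vt_e\bigr)\,\bvphi_e,
\qquad
\a_e=\sum_{F\subset\wh{\om}_{e,F}}\frac{1}{|\wh{\om}_{e,F}|}\int_F(\bsig_{h,F}\cdot\vt_e)\,dS .
\]
Using $\norm{\bvphi_e}_{\vL^2(K)}\le C|K|^{1/2}$ and the triangle inequality, the task reduces to bounding $\mu_K^{1/2}|K|^{1/2}\,\abs{\a_e-\bsig_{h,K}\cdot\vt_e}$ for each $e\in\cE_h(K)$ by the prescribed jump sum.

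Because $\bsig_h$ is piecewise constant, $\bsig_{h,F}\cdot\vt_e$ is a convex combination (with the weights $\g_F^{\pm}$) of the two neighbouring element values, and $\a_e$ is in turn a convex combination (with the weights $|F|/|\wh{\om}_{e,F}|$, which sum to one) of these face values; hence $\a_e-\bsig_{h,K}\cdot\vt_e$ is a convex combination of differences $(\bsig_{h,K'}-\bsig_{h,K})\cdot\vt_e$ with $K'\subset\om_e$. The key geometric observation is that every interior face $F\subset\om_{e,F}$ contains the edge $e$ (the interior faces of an edge patch are precisely the fan faces sharing $e$), so $\vt_e\perp\vn_F$ and therefore
\[
\abs{\jump{\bsig_h\cdot\vt_e}{F}}=\abs{\jump{\bsig_h\cross\vn}{F}\cdot(\vn_F\cross\vt_e)}\le\abs{\jump{\bsig_h\cross\vn}{F}}.
\]
For each such $K'$ I would invoke Assumption~\ref{asp:mnt} in the $\wh{\om}_e$ (minimum) form noted after its statement to obtain a monotone element path $K=K_0,K_1,\dots$ inside $\om_e$ reaching $\wh{\om}_e$; since two elements of $\om_e$ can share only a fan face, every consecutive pair shares a face lying in $\om_{e,F}$, and telescoping expresses $(\bsig_{h,K'}-\bsig_{h,K})\cdot\vt_e$ as a signed sum of tangential jumps $\jump{\bsig_h\cdot\vt_e}{F_i}$ over such faces. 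Thus $\abs{\a_e-\bsig_{h,K}\cdot\vt_e}$ is dominated by a shape-regularly bounded number of terms $\abs{\jump{\bsig_h\cross\vn}{F}}$ with $F\subset\om_{e,F}$.

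The robustness then hinges on trading the prefactor $\mu_K^{1/2}$ for the face weight $\mu_F^{1/2}$ without a coefficient-dependent constant. Along the monotone path $\mu$ is nondecreasing from $K$ towards $\wh{\om}_e$, so $\mu_K\le\mu_{K_{i-1}}\le\mu_{F_i}$ on every path face, while each recovery face $F\subset\wh{\om}_{e,F}$ is incident to an element of $\wh{\om}_e$ carrying the patch maximum of $\mu$, whence $\mu_F\ge\mu_K/2$; in all cases $\mu_K^{1/2}\le\sqrt2\,\mu_F^{1/2}$. Combining this with the tangential-jump bound above and the shape-regular scaling $|K|^{1/2}|F|^{-1/2}\le C h_F^{1/2}$ turns each term into $h_F^{1/2}\norm{\mu_F^{1/2}\jump{(\mu^{-1}\curlt\vu_h)\cross\vn}{F}}_{\vL^2(F)}$, and summing over $e\in\cE_h(K)$ and $F\subset\om_{e,F}$ yields \eqref{eq:eff-p}. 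Boundary edges are treated by the boundary clause of Assumption~\ref{asp:mnt}, and boundary faces contribute nothing because the Dirichlet data is the tangential trace of an $\ND_0$ field, exactly as in Lemma~\ref{lem:eff-0}.

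I expect the main obstacle to be the bookkeeping of the second paragraph: one must check that every face entering the telescoped convex combination genuinely lies in $\om_{e,F}$, so that $\vt_e\perp\vn_F$ and the along-edge jump is controlled by the full tangential jump, while \emph{simultaneously} admitting the weight comparison $\mu_K\lesssim\mu_F$. This is exactly where the edge-patch quasi-monotonicity of Assumption~\ref{asp:mnt} and the fan structure of the interior patch faces must be used in tandem; once both are in hand, the remaining estimates are the routine basis-norm, trace-free scaling, and convex-combination manipulations already exhibited in Lemma~\ref{lem:eff-0}.
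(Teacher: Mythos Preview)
Your outline follows the paper's argument closely---partition of unity via the $\ND_0$ basis, reduction to edge-wise differences, telescoping along quasi-monotone paths---and all of that is sound. The gap is in your robustness step for the \emph{recovery faces} $F\subset\wh{\om}_{e,F}$. Your claim ``each recovery face is incident to an element of $\wh{\om}_e$ carrying the patch maximum of $\mu$, whence $\mu_F\ge\mu_K/2$'' is false: $\mu_F$ is the harmonic mean of the $\mu$'s on the two neighbours of $F$, so it is controlled by the \emph{smaller} of the two, and the non-$\wh{\om}_e$ neighbour can have $\mu$ arbitrarily small compared to $\mu_K$. For a concrete four-element fan with $\mu^{-1}_{K_1}=1000$, $\mu^{-1}_{K_2}=1$, $\mu^{-1}_{K_3}=5$, $\mu^{-1}_{K_4}=10$ (which is quasi-monotone for both $\wt{\om}_e$ and $\wh{\om}_e$), take $K=K_4$ and the recovery face $F_1$ between $K_1$ and $K_2=\wh{\om}_e$: then $\mu_K=1/10$ while $\mu_{F_1}=2/1001$, and $\mu_K^{1/2}/\mu_{F_1}^{1/2}$ blows up with the jump ratio.

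The fix is precisely where the weights $\g_F^{\pm}$ in \eqref{eq:rec-wt} must be kept explicit rather than absorbed into a generic convex combination. For a recovery face $F$ with $\wh{K}'\in\wh{\om}_e$ one of its neighbours, the paper writes $\bsig_{h,F}-\bsig_{h,K}=(\bsig_{h,F}-\bsig_{h,\wh{K}'})+(\bsig_{h,\wh{K}'}-\bsig_{h,K})$ and observes that the first piece equals $(1-\g_F^{\wh{K}'})\,\jump{\bsig_h\cross\vn}{F}\cdot(\vt_e\cross\vn_F)$. The crucial bound is then
\[
\mu_K^{1/2}(1-\g_F^{\wh{K}'})\;\le\;\mu_{\wh{K}'}^{1/2}(1-\g_F^{\wh{K}'})
=\frac{1}{\mu_{\wh{K}'}^{-1/2}+\mu_{K'}^{-1/2}}\;\le\;\mu_F^{1/2},
\]
the first inequality using $\mu_K\le\mu_{\wh{K}'}$ (since $\wh{K}'\in\wh{\om}_e$). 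The second piece $(\bsig_{h,\wh{K}'}-\bsig_{h,K})$ is handled by your path argument. In your convex-combination language, this amounts to retaining the coefficient $\g_F^{K'}$ attached to the non-$\wh{\om}_e$ neighbour $K'$ and noting $\mu_K^{1/2}\g_F^{K'}\le\mu_F^{1/2}$; dropping that coefficient is exactly what makes your version non-robust. Once this is repaired, your proof coincides with the paper's.
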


\begin{proof}
The proof of this lemma uses the setting in the edge 
patch's illustration of Figure \ref{fig:geom-patche}. The edge patch $\om_e$ 
consists of 4 tetrahedra, and the 
following proof generalizes without essential changes to the case when there are 
more than 4 tetrahedra in $\om_e$.

Without loss of generality, the element of interest $K$ is assumed to be $K_1$ in 
Figure \ref{fig:geom-patche}. First performing the partition of unity for 
$\bsig_{h,K} = \mu^{-1}\curlt\vu_h\at{K}$, which is a constant vector and can be 
represented by $\ND_0(K)$ vector fields:
\begin{equation}
\label{eq:eff-pe}
\begin{aligned}
&{\eta}_{K,\perp}
= \norm{\mu^{\frac{1}{2}} {\bsig}^*_h - \mu^{-\frac{1}{2}} \curlt \vu_h}_{\vL^2(K)} 
= 
\norm{\mu^{\frac{1}{2}} \sum_{e\in \cE_h(K)} ({\bsig}^*_h - \bsig_{h,K})
\cdot \vt_e\,\bvphi_e}_{\vL^2(K)}
\\[2mm]
\leq&  \sum_{e\in \cE_h(K)}
\norm{\mu^{\frac{1}{2}}  ({\bsig}^*_h - \bsig_{h,K})
\cdot \vt_e\,\bvphi_e}_{\vL^2(K)}
\leq  \sum_{e\in \cE_h(K)} \mu^{\frac{1}{2}}_K
\bigl\vert({\bsig}^*_h - \bsig_{h,K})
\cdot \vt_e \bigr\vert\,\norm{\bvphi_e}_{\vL^2(K)}.
\end{aligned}
\end{equation}

By the fact that $\norm{\bvphi_e}_{\vL^2(K)} \leq C |K|^{\frac{1}{2}}$, 
the rest of the proof is to establish the equivalence, for every edge $e$, of 
$\bigl\vert({\bsig}^*_h - \bsig_{h,K})\cdot \vt_e \bigr\vert$ with the 
coefficient-weighted tangential jump term in the residual-based estimator.

For the rest of the proof let us assume the edge of interest is $e$ in Figure 
\ref{fig:geom-patche}. 
Before moving on to different coefficient distribution scenarios in this edge patch, 
first by the local recovery \eqref{eq:rec-local} and the $\ND_0$ basis function 
construction \eqref{eq:basis-nd0}, it is straightforward to check that
\[
{\bsig}^*_h \cdot \vt_e = \frac{1}{|\wh{\om}_{e,F}|}\sum_{F \subset \wh{\om}_{e,F}}
\int_{F} (\bsig_{h,F}\cdot \vt_e)\,dS.
\]

The first case is when $\wh{\om}_e = K = K_1$, then $\wh{\om}_{e,F} = F_1\cup F_4$. 
Using the geometric relation that for any 
$\vv \cdot \vt_e =\vn_F\cross(\vv\cross \vn_F)\cdot \vt_e$ if $\vt_e$ lies on the 
planar surface $F$, and the definition of the weighted average $\bsig_{h,F_i}$ in 
\eqref{eq:rec-favg}, yields
\begin{equation}
\label{eq:eff-pt}
\begin{aligned}
&({\bsig}^*_h - \bsig_{h,K})\cdot \vt_e = 
\frac{1}{|\wh{\om}_{e,F}|}\sum_{i \in \{1,4\}}
\int_{F_i} (\bsig_{h,F_i} - \bsig_{h,K})\cdot \vt_e\,dS
\\
= &\frac{1}{|\wh{\om}_{e,F}|}\sum_{i \in \{1,4\}}
\int_{F_i} \vn_{F_i}\cross\big((\bsig_{h,F_i} - \bsig_{h,K})\cross\vn_{F_i}\big) 
\cdot \vt_e \,dS
\\
=& \frac{1}{|\wh{\om}_{e,F}|}\sum_{i \in \{1,4\}}
\int_{F_i}(1 - \g_{F_i}^K)\jump{(\mu^{-1} \curlt \vu_h)\cross\vn}{F_i}
\cdot (\vt_e\cross\vn_{F_i})\,dS.
\end{aligned}
\end{equation}
By the coefficient weight defined in \eqref{eq:rec-wt}, for $F_1$ we have
\[
\mu_K^{1/2}(1 - \g_{F_1}^K) =  \frac{1}{\mu^{-1/2}_{K} + \mu^{-1/2}_{K_2}} 
 \leq \frac{\sqrt{2}}{(\mu^{-1}_{K} + \mu^{-1}_{K_2})^{1/2}} = \mu_{F_1}^{1/2}.
\]
By Cauchy-Schwarz inequality and the triangle 
inequality
\begin{equation}
\label{eq:eff-pj1}
\mu^{1/2}_K
\bigl\vert({\bsig}^*_h - \bsig_{h,K})
\cdot \vt_e\bigr\vert \leq 
\frac{1}{2|\wh{\om}_{e,F}|}\sum_{i \in \{1,4\}} 
|F_i|^{1/2}  \norm{\mu_{F_i}^{1/2} 
\jump{(\mu^{-1} \curlt \vu_h)\cross \vn}{F_i}}_{\vL^2(F_i)}.
\end{equation}

Then using the shape regularity of the mesh, i.e.
$|F_i|^{1/2} |K|^{1/2} |\wh{\om}_{e,F}|^{-1}\leq C \, h_{F_i}^{1/2}$ 
for any $F_i$ in this edge patch, we have
\begin{equation}
\label{eq:eff-pj2}
\norm{\mu^{\frac{1}{2}}  ({\bsig}^*_h - \bsig_{h,K})
\cdot \vt_e\,\bvphi_e}_{\vL^2(K)}\leq
\sum_{F\subset \wh{\om}_{e,F}} h_F^{1/2}
\norm{\mu_{F}^{1/2} \jump{(\mu^{-1} \curlt \vu_h)\cross \vn}{F}}_{\vL^2(F)}.
\end{equation}
A variant of the first case is that $K=K_1\subsetneq \wh{\om}_{e,F}$. Assume 
$\wh{\om}_{e,F} = K_1\cup K_2$, then $\wh{\om}_{e,F} = F_1\cup F_2 \cup F_4$. By the 
definition of $\wh{\om}_{e,F}$ in \eqref{eq:patch-ef}, 
$\mu_{K_1}^{-1} = \mu_{K_2}^{-1} = \min_{i=1,\dots,4} \mu_{K_i}^{-1}$. The 
proof of the bound \eqref{eq:eff-pj1} for this variant shares almost the 
same argument with above, except there will be one extra term comparing to 
\eqref{eq:eff-pt}, and 
it can be rewritten as follows:
\begin{equation}
\label{eq:eff-pj3}
\begin{aligned}
&\frac{1}{|\wh{\om}_{e,F}|} \int_{F_2} (\bsig_{h,F_2} - \bsig_{h,K})\cdot \vt_e\,dS
\\
= & 
\frac{1}{|\wh{\om}_{e,F}|} \int_{F_2} 
\Bigl[(\bsig_{h,F_2} -\bsig_{h,K_2} )
+ (\bsig_{h,K_2} - \bsig_{h,K})\Bigr]\cdot \vt_e\,dS
\\
= & 
\frac{1}{|\wh{\om}_{e,F}|} \int_{F_2} (1 - \g_{F_2}^{K_2})
\jump{(\mu^{-1} \curlt \vu_h)\cross\vn}{F_2} \cdot (\vt_e\cross\vn_{F_2})\,dS
\\
& + \frac{1}{|\wh{\om}_{e,F}|} \int_{F_2} 
\jump{(\mu^{-1} \curlt \vu_h)\cross\vn}{F_1} \cdot (\vt_e\cross\vn_{F_1})\,dS.
\end{aligned}
\end{equation}

Using the the shape regularity of the edge patch ($c|F_2|\leq |F_1| \leq C |F_2|$), 
and the fact that
\[
\mu_K^{1/2}(1 - \g_{F_2}^{K_2}) = \mu_{K_2}^{1/2}(1 - \g_{F_2}^{K_2})
\leq \mu_{F_2}^{1/2}, \; \text{ and }\; 
\mu_K^{1/2} = \frac{2}{\mu_{K}^{-1/2}+\mu_{K_2}^{-1/2}} \leq 2 \mu_{F_1}^{1/2},
\]
we reach the following estimate
\begin{equation}
\label{eq:eff-pj4}
\begin{aligned}
&\mu^{1/2}_K
\abs{\frac{1}{|\wh{\om}_{e,F}|} \int_{F_2} (\bsig_{h,F_2} - \bsig_{h,K})\cdot 
\vt_e\,dS}
\\
\leq& 
\frac{C}{|\wh{\om}_{e,F}|}\sum_{i \in \{1,2\}} 
|F_i|^{1/2}  \norm{\mu_{F_i}^{1/2} 
\jump{(\mu^{-1} \curlt \vu_h)\cross \vn}{F_i}}_{\vL^2(F_i)}.
\end{aligned}
\end{equation}

Thus the estimate \eqref{eq:eff-pj2} follows. If $\wh{\om}_e$ contains more 
elements, the same argument with above applies, with all the unweighted extra terms 
involve only the interior faces of $\wh{\om}_e$. This completes the proof for the 
first case.

The second case when $K=K_1\not\subset \wh{\om}_e$, yet $K$ is adjacent to 
$\wh{\om}_e$. Assume $K_2 = \wh{\om}_e$, i.e., $\wh{\om}_{e,F} = F_1\cup F_2$. A 
similar split as 
\eqref{eq:eff-pt} applies
\[
({\bsig}^*_h - \bsig_{h,K})\cdot \vt_e = 
\frac{1}{|\wh{\om}_{e,F}|}\sum_{i \in \{1,2\}}
\int_{F_i} (\bsig_{h,F_i} - \bsig_{h,K})\cdot \vt_e\,dS.
\]
The $F_1$ term can be estimated the same with \eqref{eq:eff-pj1}. The $F_2$ term can 
be rewritten as \eqref{eq:eff-pj3}. This time we use 
$ \mu_K^{-1} \geq \mu_{K_2}^{-1} = \min_{i=1,\dots,4}\mu_{K_i}^{-1}$, this implies
\[
\mu_{K}^{1/2} \leq \frac{2}{\mu_{K}^{-1/2} + \mu_{K_2}^{-1/2} }
\leq 2 \mu_{F_1}^{1/2},
\]
thus the estimate \eqref{eq:eff-pj4} follows, which, under some backtracking, 
confirms the validities of estimates \eqref{eq:eff-pj1} and \eqref{eq:eff-pj2}. If 
$\wh{\om}_e$ contains more elements than $K_2$, same argument applies as long as 
$\mu_K^{-1} \geq \mu_{\wh{\om}_e}^{-1}$ and the shape regularity holds for the edge 
patch of interest. This completes the proof for the second case.

The third case is that $K=K_1\not\subset \wh{\om}_e$, nor is $K$ neighboring to 
$\wh{\om}_e$. Assuming $\wh{\om}_e = K_3$, then $\wh{\om}_{e,F} = F_2\cup F_3$. 
The same split with \eqref{eq:eff-pt} applies, but this time on face $F_2$ and $F_3$,
\[
({\bsig}^*_h - \bsig_{h,K})\cdot \vt_e = 
\frac{1}{|\wh{\om}_{e,F}|}\sum_{i \in \{2,3\}}
\int_{F_i} (\bsig_{h,F_i} - \bsig_{h,K})\cdot \vt_e\,dS.
\]
In this lemma, Assumption \ref{asp:mnt} holds. Without loss of generality, we assume 
the monotone path from $K=K_1$ to $K_3$ is through $K_2$.
The $F_2$ term can be estimated exactly like previous case, because $\mu_K^{-1} \geq 
\mu_{K_2}^{-1} \geq \mu_{K_3}^{-1} = \min_{i=1\dots,4} \mu_{K_i}^{-1}$. For the 
$F_3$ term, using the same trick as \eqref{eq:eff-pj3} yields:
\[
\begin{aligned}
&\frac{1}{|\wh{\om}_{e,F}|} \int_{F_3} (\bsig_{h,F_3} - \bsig_{h,K})\cdot \vt_e\,dS
\\
= & 
\frac{1}{|\wh{\om}_{e,F}|} \int_{F_3} 
\Bigl[(\bsig_{h,F_3} -\bsig_{h,K_3}) + (\bsig_{h,K_3} -\bsig_{h,K_2}) + 
(\bsig_{h,K_2}- \bsig_{h,K})\Bigr]
\cdot \vt_e\,dS
\\
= & 
\frac{1}{|\wh{\om}_{e,F}|} \int_{F_3} (1 - \g_{F_3}^{K_3})
\jump{(\mu^{-1} \curlt \vu_h)\cross\vn}{F_3} \cdot (\vt_e\cross\vn_{F_3})\,dS
\\
& + \frac{1}{|\wh{\om}_{e,F}|}\sum_{i\in \{1,2\}} \int_{F_3} 
\jump{(\mu^{-1} \curlt \vu_h)\cross\vn}{F_i} \cdot (\vt_e\cross\vn_{F_i})\,dS.
\end{aligned}
\]
By the quasi-monotonicity of the coefficient on this edge patch again, we have
\[
\mu_K^{1/2}(1 - \g_{F_3}^{K_3}) \leq \mu_{K_3}^{1/2}(1 - \g_{F_3}^{K_3})
\leq \mu_{F_3}^{1/2}, \; \text{ and }\; 
\mu_K^{1/2} \leq \frac{2}{\mu_{K_2}^{-1/2}+\mu_{K_3}^{-1/2}} \leq 2 \mu_{F_2}^{1/2},
\]
therefore, the estimate for the $F_3$ term is similar to \eqref{eq:eff-pj4}, with 
one extra face included due to the fact that the inequality is passed through an 
intermediate element along the monotone path
\begin{equation}
\label{eq:eff-pj5}
\begin{aligned}
&\mu^{1/2}_K
\abs{\frac{1}{|\wh{\om}_{e,F}|} \int_{F_3} 
(\bsig_{h,F_3} - \bsig_{h,K})\cdot \vt_e\,dS}
\\
\leq& 
\frac{C}{|\wh{\om}_{e,F}|}\sum_{i \in \{1,2,3\}} 
|F_i|^{1/2}  \norm{\mu_{F_i}^{1/2} 
\jump{(\mu^{-1} \curlt \vu_h)\cross \vn}{F_i}}_{\vL^2(F_i)}.
\end{aligned}
\end{equation}
Consequently, estimates \eqref{eq:eff-pj1} and \eqref{eq:eff-pj2} follow for the 
third case. If the reader walks through the proof, one will find that more 
tetrahedra being contained in $\wh{\om}_e$ than 1 does not change the essential part 
of the proof because of the existence of the monotone path. This completes the proof 
of the lemma.
\end{proof}

\begin{theorem}[Local Efficiency of ${\eta}_K$]
\label{thm:eff}
Under Assumption \ref{asp:mnt}, there exists a constant $c > 0$ independent of the 
jumps of the coefficients such that for any $K \in \cT_h$:
\begin{equation}
\label{eq:eff}
c\, {\eta}_K \leq \enorm{\vu - \vu_h}_{\om_{K,F}} 
+ \osc(\vf,\mu,\b;\om_{K,F}), 
\end{equation}
where $\osc(\vf,\mu,\b;\om_{K,F})$ is 
the oscillation of the data within $\om_{K,F}$
\[
\osc(\vf,\mu,\b;\om_{K,F}) = \left\{ \sum_{K\subset\om_{K,F}} 
\Bigl( \b_K^{-1}h_K^2\norm{\divv\vf}_{L^2(K)}^2
+ \mu_K h_K^2\norm{\vf-\vf_h}_{\vL^2(K)}^2
\Bigr) \right \} ^{1/2}.
\]
\end{theorem}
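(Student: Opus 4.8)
The plan is to control $\eta_K$ by splitting it into its three constituent pieces and bounding each against the residual-based quantities in \eqref{eq:eff-residual}, whose weights are already arranged so that the resulting constants do not see the coefficient jumps. Since $\eta_K^2 = \eta_{K,\perp}^2 + \eta_{K,0}^2 + \eta_{K,R}^2$, the triangle inequality gives $\eta_K \le \eta_{K,\perp} + \eta_{K,0} + \eta_{K,R}$, so it suffices to dominate the three terms separately by the local energy error plus data oscillation.

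For the solenoidal indicator $\eta_{K,0}$, I would first invoke Lemma~\ref{lem:eff-0} to bound it by the face-jump sum $\sum_{F\subset\p K} h_F^{1/2}\norm{\b_F^{-1/2}\jump{\b\vu_h\cdot\vn}{F}}_{L^2(F)}$, and then apply the second inequality in \eqref{eq:eff-residual} face by face. Each such face $F$ is shared by $K$ and one of its face neighbours, so $\om_F\subset\om_{K,F}$, and summing produces $\norm{\b^{1/2}(\vu-\vu_h)}_{\vL^2(\om_{K,F})}$ together with the $\divv\vf$ part of $\osc(\vf,\mu,\b;\om_{K,F})$. For the irrotational indicator $\eta_{K,\perp}$, the same recipe with Lemma~\ref{lem:eff-p} (which is where Assumption~\ref{asp:mnt} enters) followed by the first inequality in \eqref{eq:eff-residual}, applied to each interior face of the edge patches $\om_e$ for $e\in\cE_h(K)$, yields the $\mu^{-1/2}\curlt(\vu-\vu_h)$ contribution plus the $\vf-\vf_h$ oscillation over the surrounding elements.

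The genuinely new term is the recovery-based element residual $\eta_{K,R}=\mu^{1/2}_K h_K\norm{\vf-\b\vu_h-\curlt\bsig^*_h}_{\vL^2(K)}$, which I would treat by a further triangle inequality,
\[
\eta_{K,R}\le \mu^{1/2}_K h_K\norm{\vf-\b\vu_h}_{\vL^2(K)} + \mu^{1/2}_K h_K\norm{\curlt\bsig^*_h}_{\vL^2(K)}.
\]
Because $\vu_h\in\ND_0$ and $\mu$ is piecewise constant, $\bsig_{h,K}:=\mu^{-1}\curlt\vu_h\at{K}$ is a constant vector on $K$, so $\curlt(\mu^{-1}\curlt\vu_h)=0$ there; hence the first term equals $\mu^{1/2}_K h_K\norm{\vf-\b\vu_h-\curlt(\mu^{-1}\curlt\vu_h)}_{\vL^2(K)}$ and is controlled by the third inequality in \eqref{eq:eff-residual}, giving $\le C\enorm{\vu-\vu_h}_K$. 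For the second term I would write $\curlt\bsig^*_h=\curlt(\bsig^*_h-\bsig_{h,K})$ on $K$ (again since $\bsig_{h,K}$ is constant), apply the inverse inequality $\norm{\curlt(\bsig^*_h-\bsig_{h,K})}_{\vL^2(K)}\le Ch_K^{-1}\norm{\bsig^*_h-\bsig_{h,K}}_{\vL^2(K)}$, and recognise from the definition in \eqref{eq:est-locavg} that $\mu^{1/2}_K\norm{\bsig^*_h-\bsig_{h,K}}_{\vL^2(K)}=\eta_{K,\perp}$. This collapses the correction into $C\eta_{K,\perp}$, which has already been bounded; this is precisely why the element residual is not a higher-order term and must be retained.

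Collecting the three estimates, I would obtain $c\,\eta_K\le\enorm{\vu-\vu_h}_{\om_{K,F}}+\osc(\vf,\mu,\b;\om_{K,F})$ with a robust constant, since every weight entering the passage to $\mu_F^{1/2}$ and $\b_F^{-1/2}$ (as in the bounds $\mu_K^{1/2}(1-\g_F^K)\le\mu_F^{1/2}$ and $(1-\k_F^K)\b_K^{-1/2}\le\b_F^{-1/2}$ used in Lemmas~\ref{lem:eff-0} and~\ref{lem:eff-p}) is jump-independent. The main obstacle I anticipate is the patch bookkeeping: the $\eta_{K,\perp}$ and $\eta_{K,R}$ contributions naturally live on the edge neighbourhood $\om_{K,e}$ of \eqref{eq:patch-Ke} rather than on the face patch $\om_{K,F}$, so I would need to verify that this larger neighbourhood is absorbed into $\om_{K,F}$ under the stated conventions, and that summing the first bound in \eqref{eq:eff-residual} over interior faces of the edge patches introduces no weight mismatch across elements carrying different values of $\mu$.
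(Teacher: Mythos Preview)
Your proposal is correct and essentially identical to the paper's proof: both reduce $\eta_{K,0}$ and $\eta_{K,\perp}$ to the residual face jumps via Lemmas~\ref{lem:eff-0}--\ref{lem:eff-p} and then to the local energy error via \eqref{eq:eff-residual}, and both handle $\eta_{K,R}$ by the triangle inequality (inserting $\curlt(\mu^{-1}\curlt\vu_h)$, which vanishes on $K$) together with an inverse estimate to collapse it to $\enorm{\vu-\vu_h}_K + C\eta_{K,\perp}$. Your patch-bookkeeping worry is legitimate---the $\eta_{K,\perp}$ bound from Lemma~\ref{lem:eff-p} naturally lands on $\om_{K,e}$ rather than $\om_{K,F}$---and the paper's proof simply does not address this discrepancy.
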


\begin{proof}
By the residual-based estimator local efficiency estimate \eqref{eq:eff-residual}, 
Lemma \ref{lem:eff-0} and \ref{lem:eff-p} which show the recovery-based $\eta_{K,0}$ 
and $\eta_{K,\perp}$ can be bounded the face jumps in the residual-based estimator, 
it suffices to show that the local recovery-based residual term is locally 
efficient. Applying the triangle inequality for ${\eta}_{K,R}$ gives:
\[
{\eta}_{K,R} \leq \mu^{1/2}_K h_K \left(
\norm{\vf - \b \vu_h - \curlt (\mu^{-1}\curlt \vu_h)}_{\vL^2(K)} + 
\norm{\curlt(\mu^{-1}\curlt \vu_h - \bsig^*_h)}_{\vL^2(K)}\right),
\]
which, together with a standard inverse inequality 
and~\eqref{eq:eff-residual}, shows that
\[
c\,{\eta}_{K,R} \leq \enorm{\vu - \vu_h}_K + {\eta}_{K,\perp}.
\]
This completes the proof of the theorem.
\end{proof}

\section{Numerical Experiments}
This section reports numerical results of our estimator on several 
three dimensional $\vhcrl$ interface test problems. 

The numerical tests are implemented under $i$FEM (see~\cite{Chen.L2008c}) framework 
in MATLAB. Initial meshes are generated by the MATLAB built-in 
\texttt{DelaunayTri} and \texttt{distmesh} (see \cite{Persson04}). At each 
iteration, let $\cS_h$ be a subset of $\cT_h$ whose elements satisfy
\[
\sum_{K\in \cS_h} \eta_K^2 \geq \theta \sum_{K\in \cT_h} \eta_K^2,
\]
where the $\eta_K $ is evaluated using the recovered quantities computed by weighted 
$\vL^2$-projections through multigrid $V(3,2)$-cycle iterations. 
This procedure is analyzed in \cite{Xu04} for diffusion 
problem, and is proved to be equivalent to the local weighted averaging. 
The marking parameter $\theta$ is chosen to be $0.2$. All elements in $\cS_h$
are refined locally by bisecting the longest edge, and some neighboring elements of 
$\cS_h$ are refined to preserve conformity of the triangulation.

To measure the global reliability of the \emph{a posteriori} error estimator, 
we show comparisons of different measures in the 
each example's table of comparison. $n$ is the number 
of levels of refinement. The $N_n$ the dimension of $\cE_{h,n}$ in the $n$-th 
level triangulation, in our case, it is the number of degrees of freedom. The 
\emph{effectivity index} for each estimator at the $n$-th level is:
 \[
\text{eff-index} := \frac{\eta_n}{\enorm{\vu - \vu_{h,n}}},
\]
where $\eta_n$ is the error estimator, and $\vu_{h,n}$ is the finite element 
approximation at the $n$-th level of triangulation.

The \emph{orders of convergence} are computed for both 
$\eta$ and $\enorm*{\vu - \vu_h}$.  $r_{\eta}$ and $r_{\text{err}}$ are defined 
as the slope for the line of $\eta_n$ and $\enorm*{\vu - \vu_{h,n}}$ in the 
log-log scale plot, such that
\[
\ln \eta_n \sim -r_{\eta} \ln N_n + c_1,\quad\text{and}\quad
\ln \enorm{\vu - \vu_{h,n}} \sim -r_{\text{err}} \ln N_n + c_2.
\]
In the convergence rate plot, the log of degrees of freedom is the horizontal 
axis, and the log of the error/estimator is the vertical axis. The order of 
convergence is optimal when $r_{\eta}$ and $r_{\text{err}}$ are approximately 
$1/3$.

In first two examples with known true solutions, the 
adaptive mesh refinement procedure is terminated when the true 
relative error
\[
\text{rel-error} := \enorm{ \vu - \vu_h}/\enorm{\vu}\leq \texttt{Tol}.
\]

For comparison, numerical results involve some of the following error 
estimators other than the recovery estimator in~\eqref{eq:est-locavg}:

\begin{itemize}[itemsep=5pt,topsep=0pt,leftmargin=20pt]

\item[1.] The residual estimator in \cite{Beck-Hiptmair-Hoppe-Wohlmuth}:
\begin{equation} 
\label{eq:est-residual}
\begin{aligned}
\eta_{K,Res}^2 
&= \mu_K h_K^2\norm{\vf - \b \vu_h - \curlt(\mu^{-1} \curlt 
\vu_h)}_{\vL^2(K)}^2 + 
\beta_K^{-1} h_K^2\norm{\divv(\b \vu_h- \vf)}_{\vL^2(K)}^2
\\[2mm]
&  + \sum_{F\in \cF_h(K)} \frac{h_F}{2} \left( \b_F^{-1} 
\norm{\jump{\b\vu_h\cdot \vn_F}{F}}_{L^2(F)}^2 + 
\mu_F\norm{\jump{(\mu^{-1} \curlt \vu_h)\cross \vn}{F}}_{\vL^2(F)}^2 \right),
\end{aligned}
\end{equation}
and $\eta_{Res}^2 =  \sum_{K\in \cT_h} \eta_{K,Res}^2$, where $\mu_F^{-1}$ and 
$\b_F$ are the arithmetic averages of $\mu^{-1}$ and $\b$, 
respectively, on elements sharing the face $F$. Note that this estimator is 
weighted appropriately and may be viewed as the extension of the residual 
estimator in \cite{Bernardi-Verfurth, Petzoldt02} for the diffusion interface 
problem to the $\vhcrl$ interface problem.

\item[2.] The Zienkiewicz-Zhu (ZZ) based error estimator in \cite{Nicaise05} 
using the coefficient-weighted norm~\eqref{eq:norm-energy}:
\begin{equation}
\label{eq:est-zz}
\begin{aligned}
\eta_{K,ZZ}^2 
=& \norm{\mu^{-1/2}\cR_{\perp}(\curlt \vu_h) 
- \mu^{-1/2}\curlt \vu_h}_{\vL^2(K)}^2 
\\
&+ \norm{\b^{1/2}\cR_{0}(\vu_h) - \b^{1/2} \vu_h}_{\vL^2(K)}^2,
\end{aligned}
\end{equation}
and $\eta_{ZZ}^2  = \sum_{K\in \cT_h} \eta_{K,ZZ}^2$. Both recovered quantities 
$\cR_{\perp}( \curlt \vu_h)$ and $\cR_{0}( \vu_h)$ are in the continuous 
piecewise linear vector fields space $\PC:= 
\{\vp\in \vHs{1}{\Omega}: \vp\at{K}\in \vP_1(K) \}$, 
and their nodal values at any vertex 
$\vz\in \cN_h$ are:
\[
\cR_{\perp}( \curlt \vu_h)\at{\vz} = \frac{1}{|\om_{\vz}|} \int_{\om_{\vz}} 
 \curlt \vu_h\,d\vx\;\text{and}\; 
\cR_{0}( \vu_h)\at{\vz} = \frac{1}{|\om_{\vz}|} \int_{\om_{\vz}}  \vu_h\,d\vx.
\]

\item[3.] The Zienkiewicz-Zhu (ZZ) flux based error estimator in 
\cite{Nicaise05} with weight suited to the coefficient-weighted 
norm~\eqref{eq:norm-energy}:
\begin{equation}
\label{eq:est-zzflux}
\begin{aligned}
\eta_{K,ZZ,f}^2 
=& \norm{\mu^{1/2}\cR_{\perp}(\mu^{-1}\curlt \vu_h) - \mu^{-1/2}\curlt 
\vu_h}_{\vL^2(K)}^2
\\
&+ \norm{\b^{-1/2}\cR_{0}(\beta  \vu_h) - \b^{1/2} \vu_h}_{\vL^2(K)}^2
\end{aligned}
\end{equation}
and $\eta_{ZZ,f}^2 = \sum_{K\in \cT_h} \eta_{K,ZZ,f}^2$. Both recovered 
quantities $\cR_{\perp}(\mu^{-1}\curlt \vu_h)$ and $\cR_{0}(\b  \vu_h)$ are in 
$\PC$ as well, and their nodal values at any vertex $\vz\in \cN_h$ are:
\[
\cR_{\perp}(\mu^{-1}\curlt \vu_h)\at{\vz} 
= \frac{1}{|\om_{\vz}|} \int_{\om_{\vz}} 
\mu^{-1}\curlt \vu_h\,d\vx\quad\text{and}\quad 
\cR_{0}(\b \vu_h)\at{\vz} 
= \frac{1}{|\om_{\vz}|} \int_{\om_{\vz}} \b  \vu_h\,d\vx.
\]


\end{itemize}

\textbf{Example 1}: This example is adapted from a benchmark test problem 
(see~\cite{Cai09,Cai10,Kellogg74})
for elliptic interface problems. The 
computational domain is a narrow slit along $z$-direction: 
$\Om =(-1, 1)^2\cross (-\d,\d) $ with $\d = 0.2$. The true 
solution $\vu$ is given in cylindrical coordinates $(r,\theta,z)$:
\[
\vu =\nab \psi =  \nab \bigl(r^{\a} \phi(\theta)\bigr),
\]
where $\phi(\theta) $ takes different values within four different subdomains 
while being glued together using continuity conditions that is firstly invented in 
\cite{Kellogg74}.
The $\mu = 1$, and the $\b$ is given by
\[
\b= 
\begin{cases} 
R & \text{ in } (0, 1)^2\cross(-\d,\d)  \cup (−1, 0)^2\cross (-\d,\d) ,
\\
1 & \text{ in } \Om\backslash \Big((0, 1)^2\cross (-\d,\d)  \cup (−1, 0)^2\cross 
(-\d,\d) \Big).
\end{cases}
\]
Here we set parameters $\a$, $R$ to be
\[ \a = 0.5, \quad R \approx 5.8284271247461907.\]

%

In this example, the tolerance is set to be $ \texttt{Tol} = 0.1$. The numerical 
results of example 1 are in Table~\ref{table:ex1}. It 
shows that to achieve approximately the same level of relative error, the 
number of degrees of freedom needed in the mesh refined by the local indicator 
$\eta_{K,ZZ}$ or $\eta_{K,ZZ,f}$ requires more than twice than the other two. 

\begin{figure}[h]
\centering
\subfloat[Refined Mesh based on $\eta_{ZZ,K}$.]
{
\includegraphics[scale=0.3]{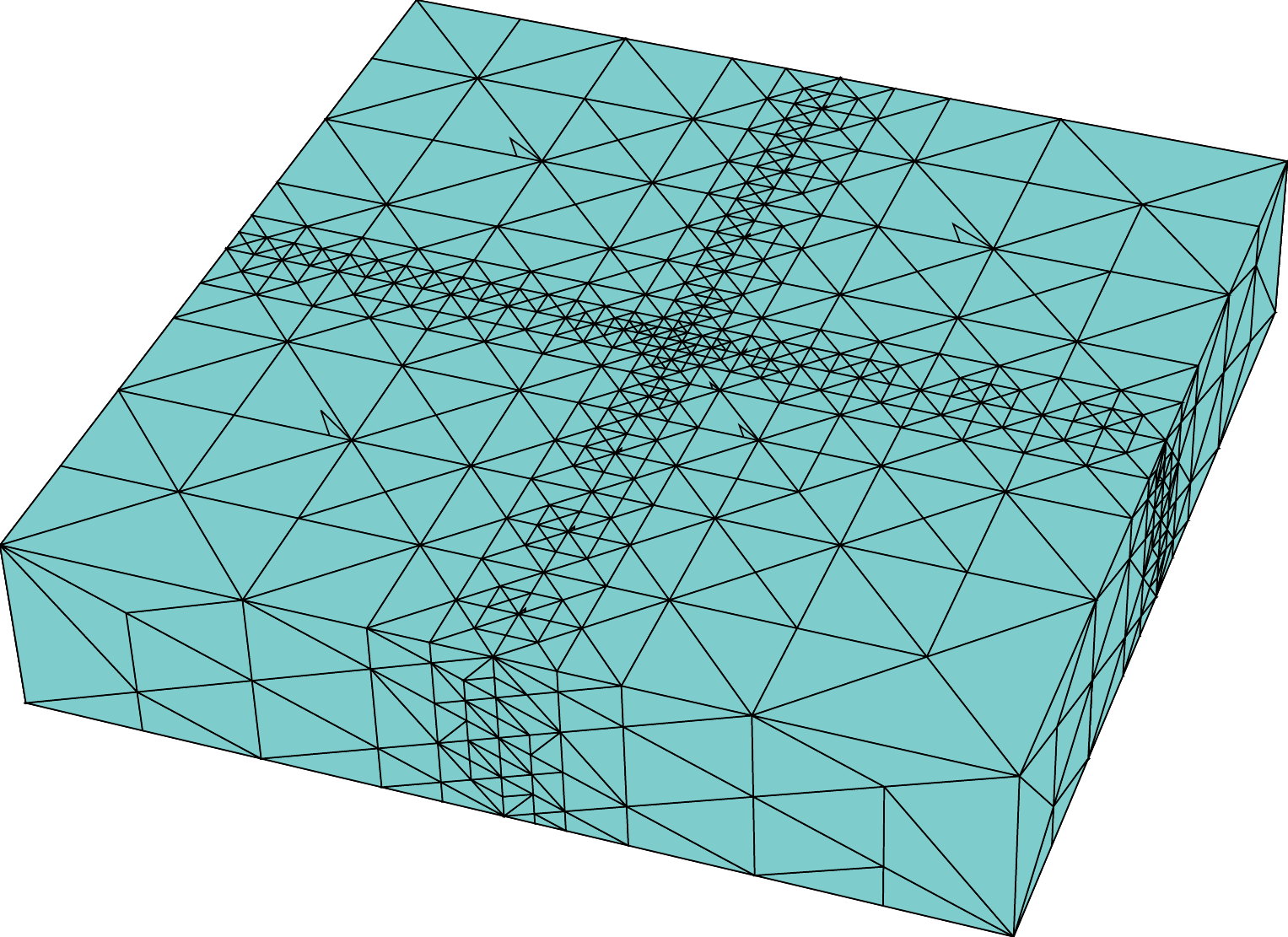}
}
\qquad
\subfloat[Refined Mesh based on $\eta_{ZZ,f,K}$.]
{
\includegraphics[scale=0.3]{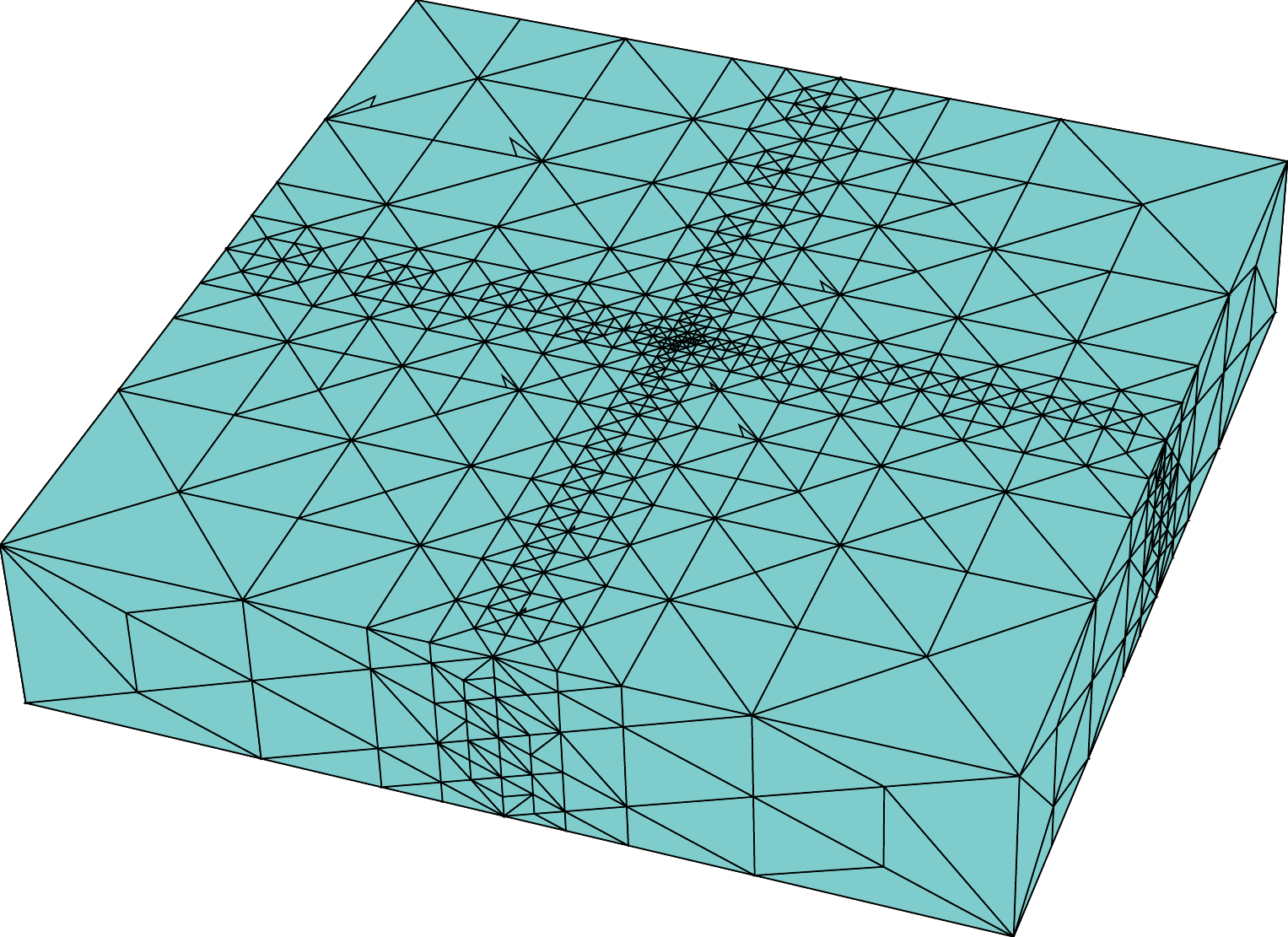}
}
\qquad
\subfloat[Refined Mesh based on $\eta_{K}$.]
{
\includegraphics[scale=0.3]{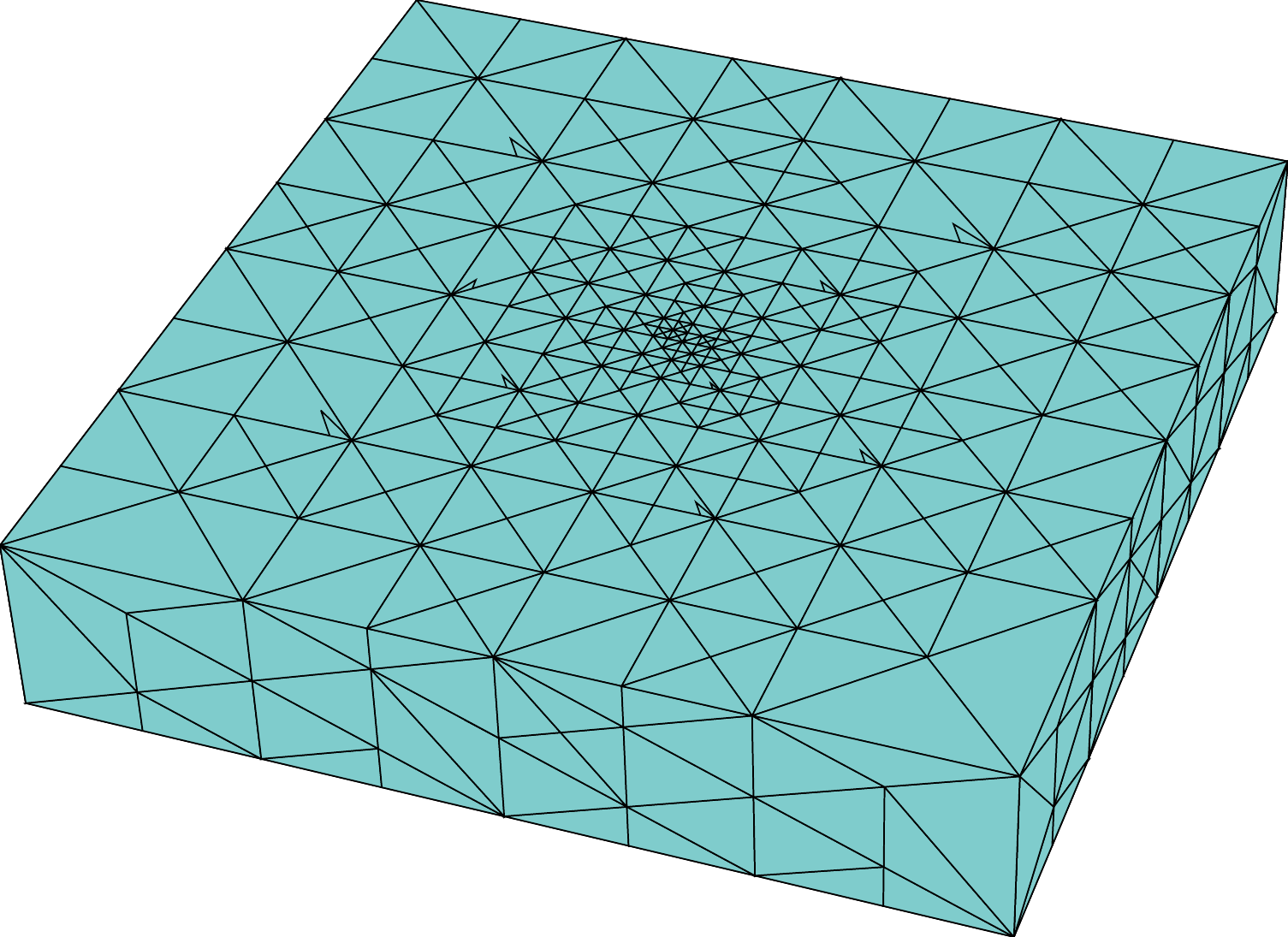}
}
\qquad
\subfloat[Refined Mesh based on $\eta_{Res,K}$.]
{
\includegraphics[scale=0.3]{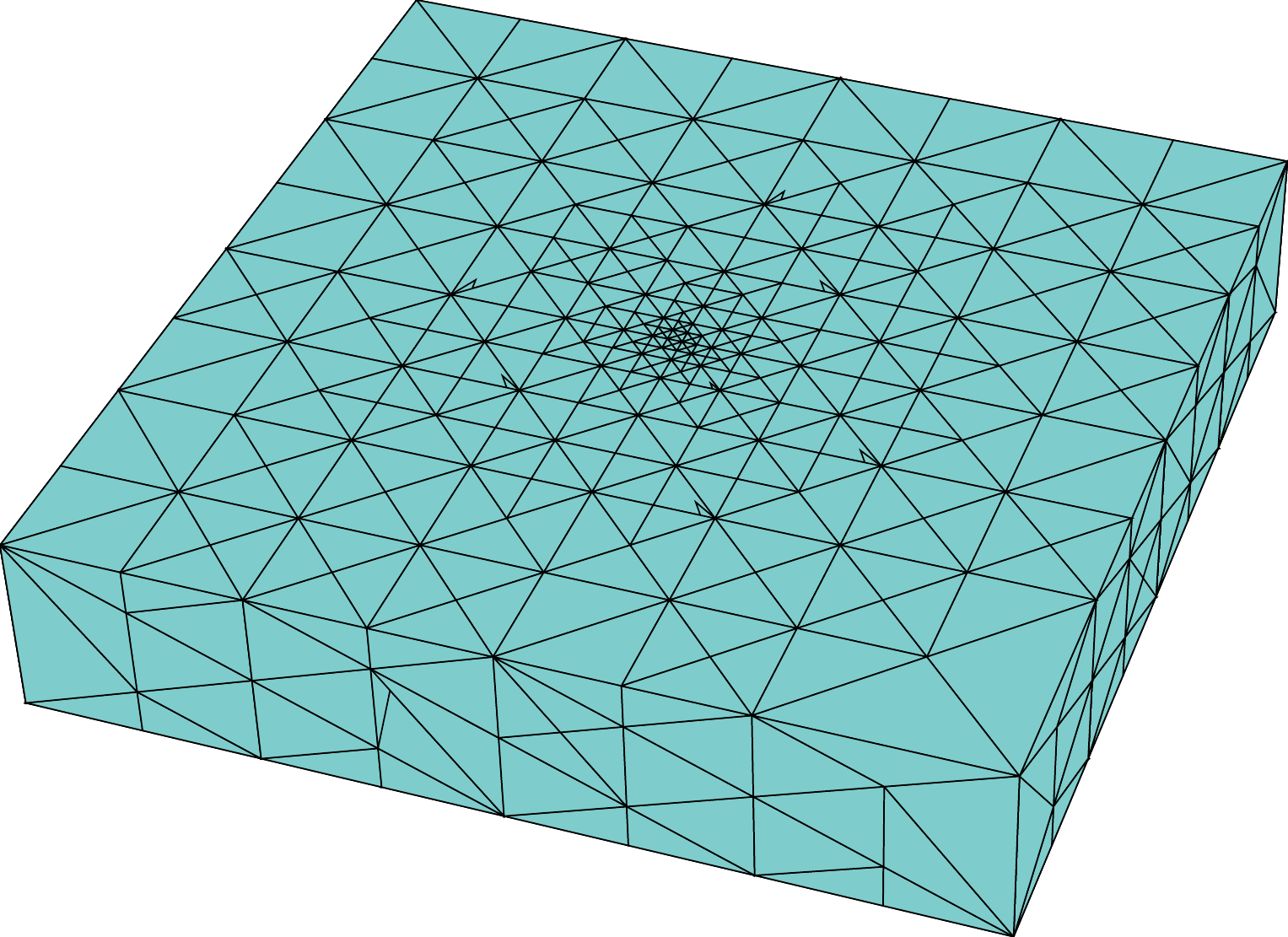}
}
\caption{Mesh result of Example 1}
\label{fig:ex1-mesh}
\end{figure}

The adaptively refined 
mesh generated by each estimator can be found in Figure~\ref{fig:ex1-mesh}. The 
tendency of $\eta_{ZZ}$ or $\eta_{ZZ,f}$ to over-refine those four interfaces is due 
to the fact that recovered quantities enforce unnecessary extra 
continuity conditions of the true quantities. For example, 
$\cR_{\perp}(\mu^{-1}\curlt 
\vu_h)$ and $\cR_{0}(\b \vu_h)$ in~\eqref{eq:est-zzflux} are in 
$\vHs{1}{\Om}$, 
yet for the true solution $\vu$, $\mu^{-1}\curlt \vu \in \vHcrl$ and $\b \vu\in 
\vHdiv$ 
in~\eqref{eq:jump}. 

Overall, the recovery-based error estimator and residual-based 
error estimator lead to the correctly refined mesh, and the recovery-based one 
performs more convincingly showing a less oscillatory convergence, achieving the 
same level of relative error in fewer iterations. More importantly, it exhibits a 
better effectivity index.

\begin{table}[h!]
\caption{Comparison of the estimators in Example 1} 
\centering  
\begin{tabular}{|c|c|c|c|c|c|c|} 
\hline
 & $n$ & \# DoF & rel-error & eff-index & $r_{\eta}$ & $r_{\text{err}}$ \\
\hline
$\eta_{ZZ}$  & $30$ & $39391$ & $0.0898$ & $1.309$ & $0.128$ & $0.235$ \\
\hline
$\eta_{ZZ,f}$  & $26$ & $44111$ & $0.0837$ & $1.495$ & $0.243$ & $0.253$ \\
\hline
$\eta_{Res}$ & $24$ & $18832$ & $0.0873$ & $1.749$ & $0.257$ & $0.301$ \\
\hline
$\eta$       & $18$ & $18649$ & $0.0886$ & $0.820$ & $0.299$ & $0.303$ \\
\hline
\end{tabular}
\label{table:ex1}
\end{table}


\textbf{Example 2}: This example is in the numerical experiments section of 
\cite{Hiptmair-Li-Zou}. The domain is  $\Omega = B_2 = \{(x,y,z): x^2+y^2+z^2 < 
2\}$, 
and the coefficients are given by
\[
\begin{cases} 
\mu = \mu_1 = 1, \, \b = 1 & \text{ in } B_1 = \{(x,y,z): x^2+y^2+z^2 < 1\},
\\
\mu = \mu_2 =  10^6, \, \b = 1  & \text{ in } \Om\backslash B_1.
\end{cases}
\] 
The true solution $\vu $ is given by $\mu \vu_1$ in  $B_1$, and $\mu \vu_2 $ in 
$\Om\backslash B_1$. For the explicit expression please refer 
to~\cite{Hiptmair-Li-Zou}. The $\texttt{Tol} = 0.2$ in this 
example. 

 
In this example, the element residual term 
$\eta_{R}$ in \eqref{eq:est-locavg} is not a higher order term (see 
Figure~\ref{fig:ex2-conv}). The red dashed line is a reference line of a constant 
multiple of $(\#\mathrm{DoF})^{-1/3}$.  
The numerical results of example 2 are in Table~\ref{table:ex2}.
The adaptively refined mesh of each estimator can be found in 
Figure~\ref{fig:ex2-mesh}. 

The refined meshes based on $\eta_{K,Res}$, and $\eta_{K}$ respectively are visually 
similar, the $\eta_{K,ZZ}$ and $\eta_{K,ZZ,f}$ tend to over-refine the region where 
the local coefficient-weighted error is not significant yet $\mu^{-1}\curlt \vu$ is 
discontinuous across the interface. 

\begin{table}[h!]
\caption{Comparison of the estimators in Example 2} 
\centering  
\begin{tabular}{|c|c|c|c|c|c|c|} 
\hline
&$n$ &\# DoF  & rel-error& eff-index & $r_{\eta}$ & $r_{\text{err}}$ \\
\hline
$\eta_{ZZ}$  &$18$& $200692$ & $0.199$ & $4.077$ & Not converging & $0.118$ \\
\hline
$\eta_{ZZ,f}$  &$20$& $99794$ & $0.193$ & $2.527$ & $0.839$ & $0.169$ \\
\hline
$\eta_{Res}$ &$9$ & $63405$ & $0.186$ & $1.761$ & $0.273$ & $0.236$ \\
\hline
$\eta$       &$8$ & $52287$ & $0.193$ & $1.079$ & $0.282$ & $0.251$ \\
\hline
\end{tabular}
\label{table:ex2}
\end{table}

\begin{figure}[h]
\centering
\begingroup
\captionsetup[subfigure]{width=0.48\textwidth}
\subfloat[Relative error distribution of refined mesh based on $\eta_{K,ZZ,f}$ cut 
on $y=0$, it can be observed that the local errors are not evenly distributed.]
{
\includegraphics[scale=0.3]{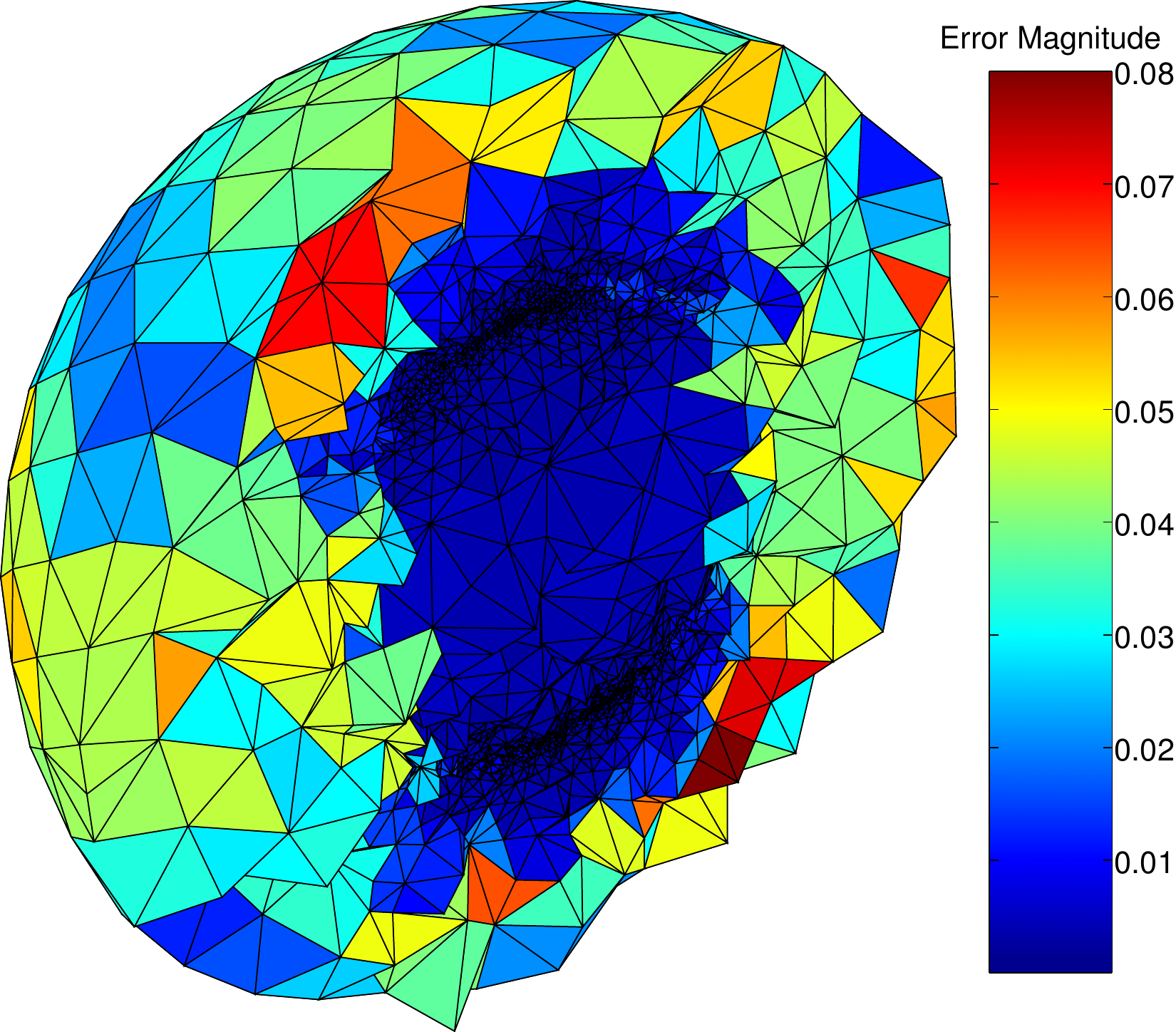}
}
\qquad\qquad
\subfloat[Relative error distribution of refined mesh based on $\eta_{K}$ cut on 
$y=0$, the local errors are more evenly distributed than the mesh refined based on 
$\eta_{K,ZZ,f}$.]
{
\includegraphics[scale=0.3]{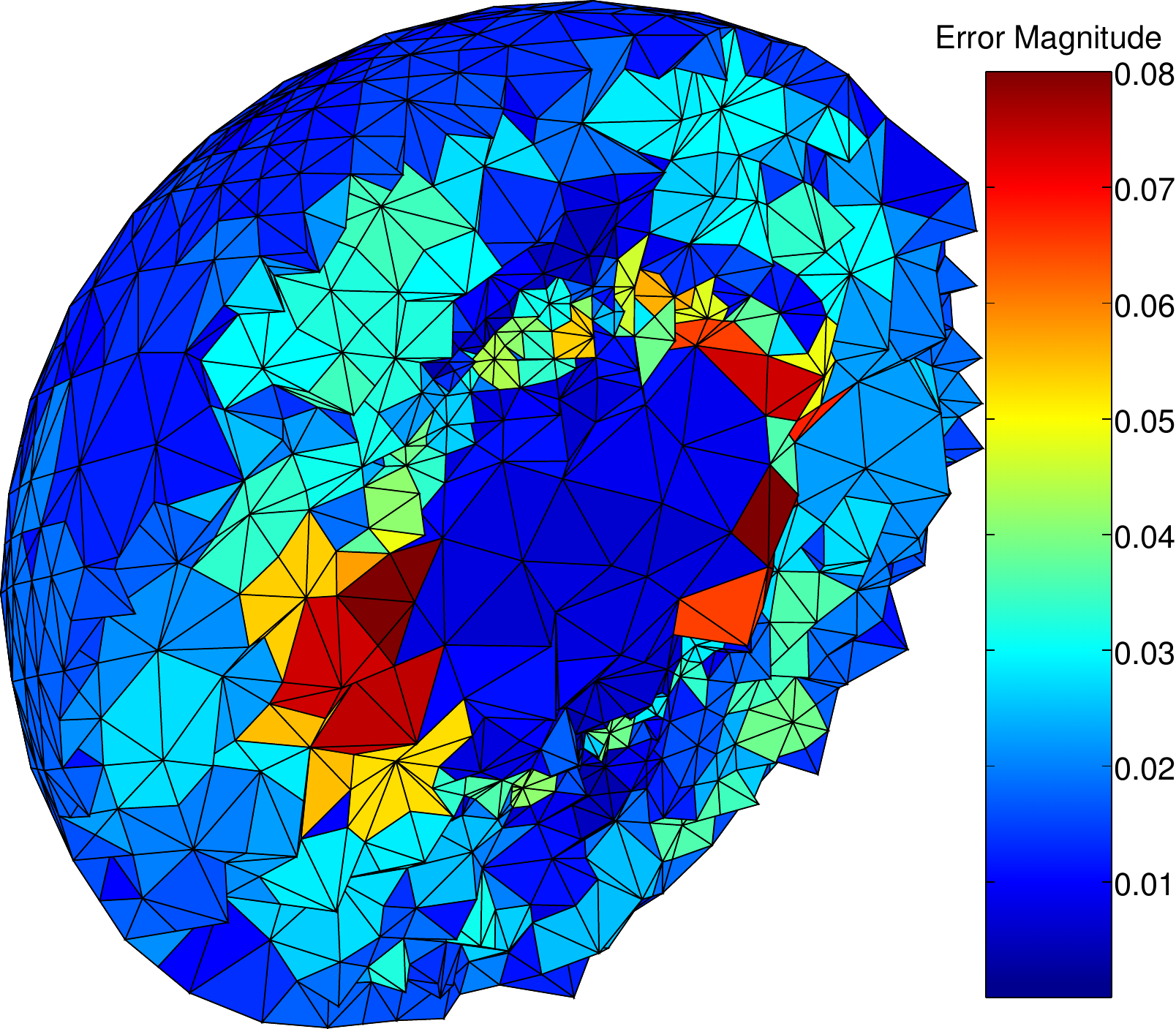}
}
\endgroup
\caption{Relative error distributions result of Example 2}
\label{fig:ex2-mesh}
\end{figure}

\begin{figure}[h]
\centering
\begingroup
\captionsetup[subfigure]{width=0.47\textwidth}
\subfloat[Example 2: convergence of $\eta$, comparing to the element residual term 
$\eta_R$.]
{
\includegraphics[width=0.35\textwidth,height=0.23\textheight]{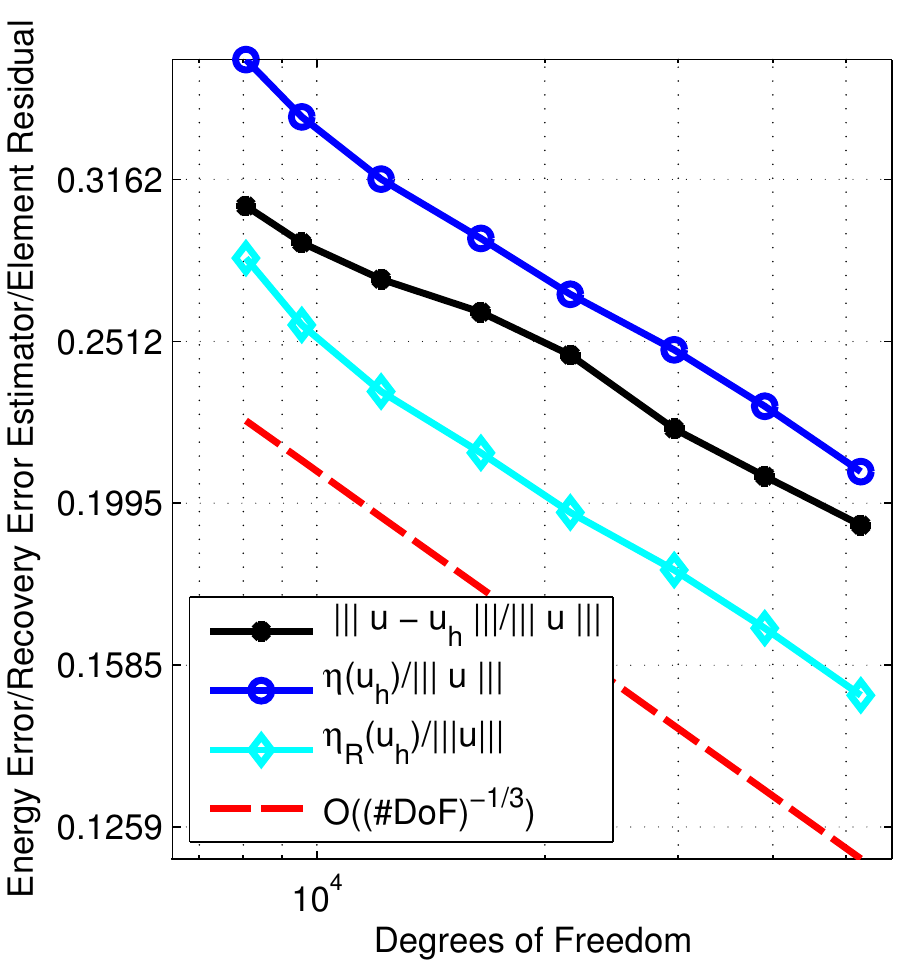}
\label{fig:ex2-conv}
}
\qquad\qquad
\subfloat[Example 3: convergence of $\eta$ versus the recovery-based only 
$\eta_{Rec}= (\eta_{\perp}^2 + \eta_{0}^2)^{1/2}$.]
{
\includegraphics[width=0.35\textwidth,height=0.23\textheight]{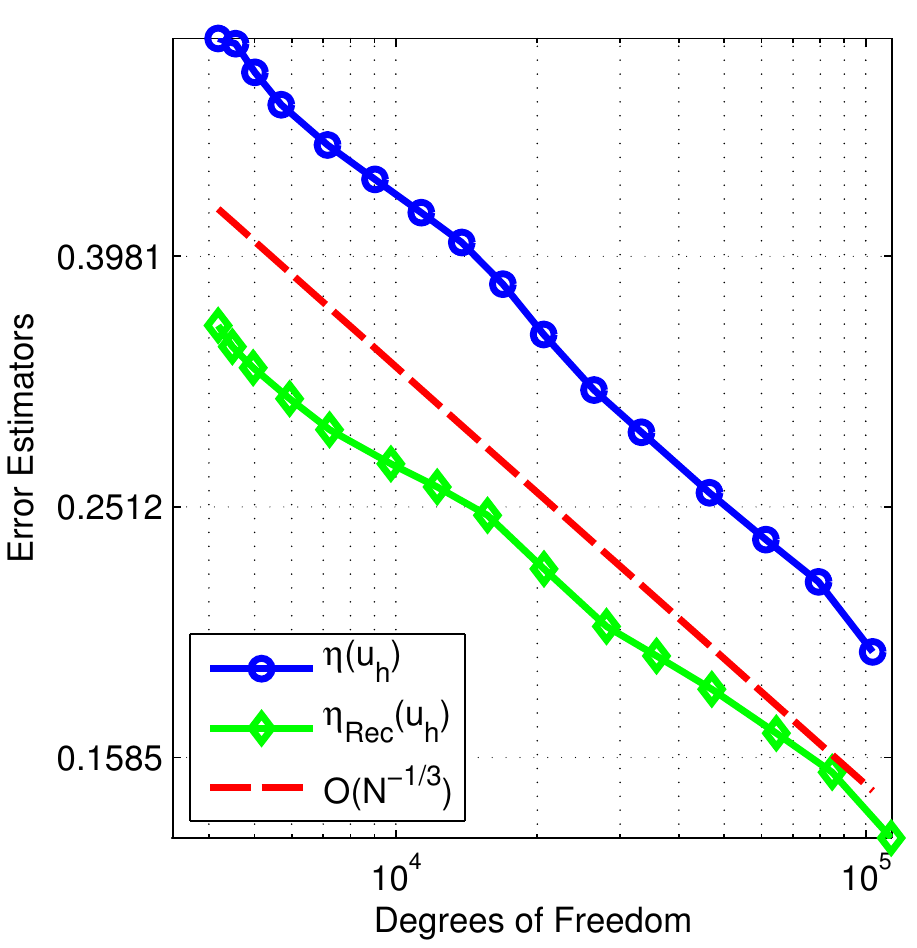}
\label{fig:ex3-conv}
}
\endgroup
\caption{Convergence Results of Example 2 and Example 3}
\end{figure}

\textbf{Example 3}: This example is a widely-used test problem examining the 
performance of adaptive mesh refinement procedure for Maxwell 
equations (e.g. see~\cite{Hiptmair99}). The 
true solution is unknown and not smooth. The homogeneous Dirichlet boundary 
condition is enforced, together with a constant source current 
$\vf = (1,1,1)$. The coefficients are given by:
\[
\begin{cases} 
\mu = 1, \, \b = 1 & \text{ in } \Om_c,
\\
\mu = 1, \, \b = 100  & \text{ in } \Om\backslash \Om_c
\end{cases}
\]
where $\Om_c$ is $\{(x,y,z): |x|,|y|,|z|\leq \frac{1}{2} \}$. In this example, we 
cannot compute the true error, hence we set the stop criterion to be $\eta\leq 
\texttt{Tol}$ with $\texttt{Tol} = 0.16$ at the $n$-th level of triangulation. 

\begin{table}[h!]
\caption{Comparison of the estimators in Example 3} 
\centering  
\begin{tabular}{|c|c|c|c|c|c|} 
\hline
&$n$ &\#(DoF) & $\text{Estimator}$ &$r_{\eta}$ \\
\hline
$\eta_{Res}$ & $21$ & $219993$ & $0.156$ & $0.328$ \\
\hline
$\eta$       & $14$ & $61302$ & $0.152$ & $0.346$ \\
\hline
$\eta_{Rec}$ & $8$ & $15672$ & $0.159$ & $0.230$ \\
\hline
\end{tabular}
\label{table:ex3}
\end{table}

This example illustrates two important aspects: (1) the element residual is 
indispensable in the error estimator in the pre-asymptotic region; 
(2) the iterative refining procedure 
using the residual-based estimator engages much more degrees of freedom than 
the one using the recovery-based estimator, when same stop criterion is used for 
both.

The recovery-based error estimator shows an optimal order of convergence, which 
is $\eta \sim (\#\mathrm{DoF})^{-1/3}$, i.e., $r_{\eta}\approx 1/3$, if the 
local error indicator includes the element residual 
$\eta_K := (\eta_{K,\perp}^2 + \eta_{K,0}^2 + \eta_{K,R}^2)^{1/2}$. 
If the element residual is 
discarded, i.e., the pure recovery-based estimator $\eta_{K,Rec}:= 
(\eta_{K,\perp}^2 + \eta_{K,0}^2)^{1/2}$ is used as the 
local error indicator, the order of convergence for $\eta_{Rec}:= 
\Big(\sum\limits_{K\in \cT_h} \eta_{K,Rec}^2\Big)^{1/2}$ is not optimal (see 
Figure~\ref{fig:ex3-conv}, and Table~\ref{table:ex3}).

From the first two examples, we learn that the effectivity index of the 
recovery-based estimator is in general two times as effective as that of the 
residual-based estimator. For problem with an unknown solution which is quite 
common originated from some real world applications, when setting the stopping 
criterion using the global error estimator, the number of degrees of freedom 
using the residual-based error estimator is 
$(\text{eff-index}_{\text{Res}}/\text{eff-index}_{\text{Rec}})^3$ 
as much as that using the recovery-based error estimator (see 
Table~\ref{table:ex3}).

\appendix
\section{Weighted Helmholtz Decomposition} 
\label{appendix:decomp}

Here we establish a weighted Helmholtz decomposition in light 
of~\cite{Costabel-Dauge-eigenvalue,Costabel-Dauge-Nicaise} tailored for the 
$\vhcrl$ interface problem. The following assumption is needed to guarantee 
that such a decomposition exists with the constant in estimate 
\eqref{eq:rel} is independent of the jumps of the coefficients. In other words, the 
constant in the estimate depends on the jump size of the product of two 
coefficients, and the geometries of the interfaces as well.

\begin{assumption}
\label{asp:coeff}
{\em (i)} The domain $\Om$ is assumed to be convex, simply-connected, and that no 
three or more subdomains share one edge from the triangulation of $\Om$. {\em (ii)} 
The coefficients $\mu$ and $\b$ are assumed to satisfy: 
$C_{\text{min}}\leq \mu_j \b_j \leq C_{\text{max}}$, where 
$C_{\text{min}}$ and $C_{\text{max}}$ are two constants 
independent of the jumps of $\mu_j$ and $\b_j$, or $\Om_j$ on each subdomain 
$\Om_j$.  
\end{assumption}

Firstly, we define some additional function spaces, along with the 
$\vX(\Om,\b)$ in \eqref{eq:sps-decomp}, relevant to the weighted 
Helmholtz decomposition as follows: for any piecewise constant $\a = \a_j$ in 
$\Om_j$: 
\begin{equation}
\label{eq:sps-additional}
\begin{aligned}
&\vHcrlzz = \{\vu\in \vHcrlz: \curlt \vu = 0 \text{ in } \Om\},
\\[1mm]
&\vPCinf = \{\vv\in \vLt: \, 
\vv\at{\Om_j} \in \vC^{\infty}(\Om_j), \, j = 1,\ldots,m\},
\\[1mm]
&\text{and }\PHs{s} = 
\{v \in \Lt: v\at{\Om_j} \in H^s(\Om_j), \, j = 1,\ldots,m \}.
\end{aligned}
\end{equation}

It is well known (see \cite{Girault-Raviart}) that the kernel of 
curl operator, $\vHcrlzz$, is characterized by the gradient field in a 
simply-connected domain:
\begin{lemma}
\label{lem:curlfreepotential}
If $ \Om$ is simply-connected, for any $\vu \in \vHcrlzz $, there exists 
a unique function $ \psi\in \Hoz $ such that $ \vu = \nab \psi $.
\end{lemma}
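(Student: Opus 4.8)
The plan is to prove that the kernel of the curl operator on a simply-connected domain is exactly the space of gradients of $H^1_0$ functions. This is a classical result, and the proof splits naturally into two parts: showing every such gradient lies in $\vHcrlzz$ (the easy inclusion), and showing every curl-free field arises as such a gradient (the substantive direction), followed by uniqueness.

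For the easy inclusion, I would observe that if $\psi\in\Hoz$, then $\nab\psi\in\vLt$ by definition of $H^1$, and $\curlt(\nab\psi)=0$ identically as a distributional identity. The boundary condition $\nab\psi\cross\vn = 0$ on $\p\Om$ follows from $\psi\in\Hoz$: the tangential trace of a gradient is the surface gradient of the trace, and since $\psi$ has zero trace on $\p\Om$, its tangential derivatives along the boundary vanish. Hence $\nab\psi\in\vHcrlzz$, and the curl is zero. This shows gradients of $\Hoz$ functions sit inside the kernel.

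The main work is the converse: given $\vu\in\vHcrlzz$, construct $\psi\in\Hoz$ with $\vu=\nab\psi$. First I would extend $\vu$ by zero outside $\Om$ to a field $\wt{\vu}$ on $\RR^3$ (or on a convenient simply-connected enclosing ball). Because $\vu\cross\vn=0$ on $\p\Om$, the tangential components match across the boundary, so $\curlt\wt{\vu}=0$ holds in the distributional sense on all of $\RR^3$ — the zero tangential trace is precisely what kills the surface-distribution term that would otherwise appear in the curl of the extension. Since $\RR^3$ (or the ball) is simply-connected, the standard Poincar\'{e}-type result for curl-free $L^2$ vector fields on a simply-connected domain (see \cite{Girault-Raviart}) yields a scalar potential $\wt{\psi}\in H^1$ with $\nab\wt{\psi}=\wt{\vu}$. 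Restricting to $\Om$ gives $\psi:=\wt{\psi}\at{\Om}\in\Ho$ with $\nab\psi=\vu$. It remains to verify $\psi\in\Hoz$: since $\wt{\vu}=\bm{0}$ outside $\Om$, the potential $\wt{\psi}$ is constant on each connected component of the exterior; normalizing this constant to zero forces the trace of $\psi$ on $\p\Om$ to vanish, giving $\psi\in\Hoz$.

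The uniqueness is the most routine step: if $\nab\psi_1=\nab\psi_2=\vu$ with $\psi_1,\psi_2\in\Hoz$, then $\nab(\psi_1-\psi_2)=\bm{0}$, so $\psi_1-\psi_2$ is constant on each connected component of $\Om$ (here I would invoke connectedness, which follows from simple-connectedness); the zero boundary trace forces that constant to be zero, hence $\psi_1=\psi_2$. The single genuine obstacle is establishing that the zero-extension $\wt{\vu}$ is curl-free across $\p\Om$ as a distribution on the whole enclosing domain — this is exactly where the boundary condition $\vu\cross\vn=0$ built into $\vHcrlz$ is indispensable, and where one must be careful with the jump term in the distributional curl of a discontinuous field. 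Once that is in place, the simple-connectedness hypothesis does all the remaining work through the cited scalar-potential result.
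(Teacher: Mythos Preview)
The paper does not actually prove this lemma; it simply states it as ``well known'' with a citation to \cite{Girault-Raviart}. Your zero-extension argument is the standard route and is essentially correct.

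One subtlety you glide over: when you write ``normalizing this constant to zero forces the trace of $\psi$ on $\p\Om$ to vanish,'' you are implicitly assuming that the exterior of $\ol{\Om}$ (inside your enclosing ball) is connected, i.e., that $\p\Om$ is connected. In three dimensions, simple-connectedness alone does not imply this --- a spherical shell $\{1<|x|<2\}$ is simply-connected but has two boundary components, and in that case $\vu=\nab(1/|x|)$ lies in $\vHcrlzz$ yet cannot be written as $\nab\psi$ with $\psi\in\Hoz$. So the lemma as literally stated needs this extra topological hypothesis. In the paper's actual setting this is harmless, since Assumption~\ref{asp:coeff} takes $\Om$ convex, but if you write the argument out in full you should make that assumption explicit at the normalization step.
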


Since $\cA(\vv,\nab \psi) = (\b \vv,\nab \psi)$, the 
orthogonal complement of $\vHcrlzz = \nab \Hoz $ with respect to 
$\cA(\cdot,\cdot)$ is 
\[
\{\vv\in \vHcrlz: (\b \vv,\nab 
\psi)=0\,\,\forall \psi\in \Hoz\}\subset \vX(\Om,\b). 
\]
To construct a weighted Helmholtz decomposition tailored for the interface 
problem, an analysis of the structure of $\vX(\Om,\b)$ is necessary. Before 
tackling this, the following lemma from~\cite{Costabel-Dauge-Nicaise} is 
needed:
\begin{lemma}
\label{lem:density}
$\vX(\Om,\a) \cap \vPCinf$ is dense in $\vX(\Om,\a)\cap \vPH{1}$ in the 
following norm:
\[
\norm{\vv}_{\vX(\Om)}^2 = \norm{\vv}_{\vLt}^2 + \norm{\curlt\vv}_{\vLt}^2
 + \norm{\divv(\a\vv)}_{\Lt}^2.
\]
\end{lemma}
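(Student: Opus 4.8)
The plan is to exploit the fact that membership in $\vX(\Om,\a)$ imposes on both spaces the \emph{same} transmission conditions, so that the statement is really about smoothing rather than about gluing. For $\vv\in\vX(\Om,\a)\cap\vPH{1}$ the distributional requirements $\curlt\vv\in\vLt$ and $\divv(\a\vv)\in\Lt$ force, across every interface of the skeleton $\fI$, both the tangential trace of $\vv$ to be continuous (and to vanish on $\p\Om$, since $\vv\in\vHcrlz$) and the normal trace of $\a\vv$ to be continuous: a piecewise $H^1$ field has distributional curl and weighted divergence free of surface terms exactly when these two jumps vanish. Thus I must approximate $\vv$ by fields that are $\vC^\infty$ on each $\Om_j$ while preserving these two traces. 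I would localize by a partition of unity $\{\chi_\ell\}$ subordinate to a cover of $\ol{\Om}$ adapted to $\mathscr{P}$, so that each $\chi_\ell\vv\in\vX(\Om,\a)\cap\vPH{1}$ is supported in a patch of one of four types: (a) interior to a single $\Om_j$; (b) straddling a smooth portion of one interface between two subdomains; (c) meeting a smooth portion of $\p\Om$; and (d) surrounding the codimension-two set where interfaces meet one another or meet $\p\Om$.

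For type (a) the claim is the classical Friedrichs mollification. For type (c), after flattening $\p\Om$ I would reflect the normal component evenly and extend the vanishing tangential components by the homogeneous-trace rule, then translate inward and mollify à la Ne\v{c}as so that the tangential trace stays zero; convergence in each $\vHs{1}{\Om_j}$ is standard.

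The core mechanism is type (b). After flattening the interface to $\{x_3=0\}$ with the two subdomains in $\{\pm x_3>0\}$ and $\a=\a_{\pm}$, I set $\wt{\vv}=(v_1,v_2,\a v_3)$. The two transmission conditions say precisely that the one-sided traces of $\wt{\vv}$ agree, so $\wt{\vv}$ is a piecewise $H^1$ field with matching traces, hence globally $H^1$ on the patch. I would mollify $\wt{\vv}$ and then revert by $\vv_\e=(\wt v_{1,\e},\wt v_{2,\e},\a^{-1}\wt v_{3,\e})$. Because $\a$ is piecewise constant, $\vv_\e$ is $\vC^\infty$ on each side, automatically inherits the tangential- and $\a$-normal-continuity conditions, and converges to $\vv$ in $\vHs{1}{\Om_+}$ and $\vHs{1}{\Om_-}$; this in turn forces $\vv_\e\to\vv$ in $\vLt$, $\curlt\vv_\e\to\curlt\vv$ in $\vLt$, and $\divv(\a\vv_\e)\to\divv(\a\vv)$ in $\Lt$, i.e.\ convergence in the $\vX(\Om)$-norm.

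The main obstacle is type (d): around the one-dimensional singular set there is no globally single-valued tangential/normal frame, so the straightening used in (b) has no consistent analogue and that construction breaks down. My plan is to excise a shrinking tubular neighborhood of this skeleton by a cutoff $\chi_\delta$, run Steps (a)--(c) on the complement, and pass to the limit by showing $\chi_\delta\vv\to\vv$ in the $\vX(\Om)$-norm as $\delta\to0$. This is the delicate point and, I expect, the real technical burden: since the skeleton has codimension two, an ordinary cutoff with $|\nab\chi_\delta|\sim\delta^{-1}$ is too singular, and one must instead invoke the classical fact that codimension-two sets carry zero $H^1$-capacity, using logarithmic cutoffs together with the embedding $\vHs{1}{\Om_j}\hookrightarrow\vL^6(\Om_j)$ to make the commutator terms $\nab\chi_\delta\otimes\vv$ vanish. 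I would rely on the detailed codimension-two analysis of \cite{Costabel-Dauge-Nicaise} to push this last step through, as the lemma is quoted from there.
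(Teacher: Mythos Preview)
The paper does not actually prove this lemma: it is simply quoted from \cite{Costabel-Dauge-Nicaise} with the phrase ``the following lemma from~\cite{Costabel-Dauge-Nicaise} is needed,'' and no argument is given. So there is no in-paper proof to compare your proposal against.

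That said, your sketch is a reasonable outline of how such density results are established, and the key mechanism you identify for the two-subdomain case (type~(b)) --- replacing $\vv=(v_1,v_2,v_3)$ by $\wt\vv=(v_1,v_2,\a v_3)$ so that the two transmission conditions $\jump{\vv\times\vn}{}=\bm{0}$ and $\jump{\a\vv\cdot\vn}{}=0$ become exactly the matching of full traces, hence $\wt\vv\in H^1$ across the flattened interface --- is precisely the trick used in the Costabel--Dauge--Nicaise analysis. Your localization into cases (a)--(d) and the treatment of (a)--(c) are standard and correct.

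Where your proposal is not self-contained is exactly where you say it is: case~(d), the codimension-two skeleton (edges where interfaces meet each other or the boundary). You correctly diagnose that a naive cutoff fails and that a capacity/logarithmic-cutoff argument is needed, but you then explicitly defer this step back to \cite{Costabel-Dauge-Nicaise}. Since that is the same reference the paper cites for the entire lemma, your proposal is, in the end, not an independent proof but a (correct) unpacking of why the cited result is plausible, with the genuinely hard step still outsourced. If your goal was to supply a proof the paper lacks, you would need to carry out the codimension-two capacity estimate in full; if your goal was only to match the paper, note that the paper gives no argument at all.
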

Now we move on to prove the norm equivalence for certain 
piecewise $\vH^1$-vector fields using the density argument of 
Lemma~\ref{lem:density}.
\begin{lemma}[Norm equivalence for piecewise smooth vector fields]
\label{lem:normeqiv}
For all $\vv \in \vX(\Om,\a)\cap \vPH{1}$, the following identity holds:
\begin{equation}
\label{eq:curldiv}
\sum^m_{j=1}\int_{\Om_j} \a|\nab \vv|^2 = 
\int_{\Om} \left(\a |\curlt \vv|^2 + \a^{-1}|\divv (\a \vv)|^2 \right).
\end{equation}
\end{lemma}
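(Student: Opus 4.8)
The plan is to reduce to piecewise smooth fields by Lemma~\ref{lem:density}, establish a pointwise divergence identity, integrate by parts subdomain-by-subdomain, and then show that the \emph{weighted} boundary fluxes cancel by exactly the two interface continuity conditions encoded in $\vX(\Om,\a)$. First, by Lemma~\ref{lem:density} it suffices to prove \eqref{eq:curldiv} for $\vv\in\vX(\Om,\a)\cap\vPCinf$, and the general case $\vv\in\vX(\Om,\a)\cap\vPH{1}$ passes to the limit. This passage is self-improving: applying the established identity to a difference $\vv_k-\vv_l$ of approximants gives
\[
\sum_{j=1}^m\a_j\int_{\Om_j}|\nab(\vv_k-\vv_l)|^2
=\int_{\Om}\Big(\a|\curlt(\vv_k-\vv_l)|^2+\a^{-1}|\divv(\a(\vv_k-\vv_l))|^2\Big),
\]
so once $\curlt\vv_k$ and $\divv(\a\vv_k)$ are $\vL^2$-Cauchy, the piecewise gradients $\nab\vv_k$ are Cauchy in the weighted $\vL^2$ norm; hence $\nab\vv_k\to\nab\vv$ piecewise and both sides of \eqref{eq:curldiv} converge.

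For smooth fields I would use the pointwise algebraic identity, obtained from $(\curlt\vv)_k=\e_{klm}\p_l v_m$ by a short index computation in which all second-order terms cancel by equality of mixed partials:
\[
|\nab\vv|^2-|\curlt\vv|^2-(\divv\vv)^2=\divv\vect{q},
\qquad
\vect{q}:=(\vv\cdot\nab)\vv-(\divv\vv)\,\vv.
\]
On $\Om_j$ the coefficient $\a=\a_j$ is constant, so $\divv(\a\vv)=\a_j\divv\vv$ and $\a_j(\divv\vv)^2=\a^{-1}|\divv(\a\vv)|^2$ there. Multiplying by $\a_j$, applying the divergence theorem, and summing over $j$ reduces \eqref{eq:curldiv} to the vanishing of the total weighted boundary flux $\sum_j\a_j\int_{\p\Om_j}\vect{q}\cdot\vn\,dS$.

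The heart of the argument is the flux on a flat face. Writing $\vv=\vv_{\top}+(\vv\cdot\vn)\vn$ and using that $\vn$ is constant on each polyhedral face, the normal derivatives cancel and one is left with a purely tangential expression
\[
\vect{q}\cdot\vn=\vv_{\top}\cdot\nab_{\top}(\vv\cdot\vn)-(\vv\cdot\vn)\,\div_{\top}\vv_{\top},
\]
where $\nab_{\top}$ and $\div_{\top}$ denote the surface gradient and divergence; crucially this depends only on the trace of $\vv$ on the face, not on its normal derivative. On an interface $S=\p\Om_j\cap\p\Om_k$ the two fluxes carry opposite normals, and using that $\vv\in\vHcrl$ makes $\vv_{\top}$ continuous across $S$, collecting them with their weights $\a_j,\a_k$ yields
\[
\vv_{\top}\cdot\nab_{\top}\jump{\a\,\vv\cdot\vn}{S}
-\jump{\a\,\vv\cdot\vn}{S}\,\div_{\top}\vv_{\top}.
\]
Since $\vv\in\vHdivw{\a}$ forces $\jump{\a\,\vv\cdot\vn}{S}=0$, both terms vanish; on the exterior boundary $\vv\cross\vn=\bm{0}$ gives $\vv_{\top}=\bm{0}$ and hence $\vect{q}\cdot\vn=0$. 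Thus every boundary contribution drops and \eqref{eq:curldiv} follows.

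The main obstacle I anticipate is precisely this flat-face computation together with the bookkeeping that the weight $\a_j$ is the one turning the two physical interface conditions---continuity of the tangential trace $\vv\cross\vn$ and of the weighted normal trace $\a\,\vv\cdot\vn$---into an exact cancellation. It is here that the polyhedral, zero-curvature geometry of the subdomains is essential: on a curved interface $\vect{q}\cdot\vn$ would acquire second-fundamental-form terms that are not controlled by the traces alone, and the clean cancellation that produces \eqref{eq:curldiv} would be lost.
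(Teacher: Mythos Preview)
Your proposal is correct and follows essentially the same route as the paper: reduce to piecewise $\vC^\infty$ fields by Lemma~\ref{lem:density}, obtain a pointwise identity relating $|\nab\vv|^2$, $|\curlt\vv|^2$, and $(\divv\vv)^2$ up to a divergence, and then show the resulting interface/boundary fluxes vanish on flat faces using precisely the continuity of $\vv_\top$ and of $\a\,\vv\cdot\vn$. Your packaging via the single divergence identity $|\nab\vv|^2-|\curlt\vv|^2-(\divv\vv)^2=\divv\big((\vv\cdot\nab)\vv-(\divv\vv)\vv\big)$ is slightly more direct than the paper's use of $-\Delta\bphi=\curlt\curlt\bphi-\nab\divv\bphi$ followed by two integrations by parts, and your explicit Cauchy argument for the density passage is a nice addition the paper leaves implicit.
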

\begin{proof}
By Lemma~\ref{lem:density}, it suffices to establish 
identity~\eqref{eq:curldiv} for 
any $\bphi\in \vX(\Om,\a) \cap \vPCinf$. To this end, using a local identity 
$-\D \bphi = \curlt(\curlt \bphi) - \nab(\divv \bphi)$ and integrating by parts 
on 
each subdomain $\Om_j$ twice give:
\[
\begin{aligned}
&\sum_{j=1}^m \int_{\Om_j} \a|\nab \bphi|^2
= \sum_{j=1}^m\int_{\Om_j} \a_j\left(|\curlt \bphi|^2 + 
|\divv \bphi |^2 \right) + B
\\
\text{with }\; &
B = \sum_{j=1}^m\int_{\p\Om_j} \a_j\Big( (\vn \cdot \nab) \bphi 
+ \vn \cross (\curlt \bphi) - (\divv \bphi)\vn \Big) \cdot \bphi\, dS .
\end{aligned}
\]
Now, it remains to prove that $B = 0$. On any polygonal face with normal vector 
$\vn$ that is represented by the cartesian coordinates in the three dimensional 
space, rather than the local planar coordinates,
$\bphi$ may be decomposed into the normal and tangential components as follows:
\begin{equation}
\label{eq:dcp-vector}
\bphi = (\bphi\cdot \vn)\vn + \bphit\quad \text{ with }\; \bphit = 
\vn\cross(\bphi\cross \vn),
\end{equation}
which, in turn, implies
\begin{equation}
\label{eq:dcp-surfacevector}
\begin{aligned}
&(\vn\cdot \nab)\bphi = \big((\vn\cdot \nab)(\bphi\cdot \vn)\big) \vn
+ (\vn\cdot \nab)\bphit
\\[1mm]
\text{and }&\divv \bphi = \divv \big((\bphi\cdot \vn)\vn\big) + \divv \bphit
= (\vn\cdot \nab)(\bphi\cdot \vn) + \divv \bphit.
\end{aligned}
\end{equation}
By using the following identity (e.g. see \cite{Balanis}) 
\[
\nab(\va\cdot \vb ) = (\va\cdot \nab)\vb + 
(\vb\cdot \nab)\va + \va\cross(\curlt\vb) + \vb\cross(\curlt\va)
\]
and noticing that $\vn$ is a constant vector on a face, we have
\[
\nab (\vn\cdot \bphi) = (\vn\cdot \nab)\bphi  + \vn \cross (\curlt \bphi),
\]
which yields the following by being projected onto each polygonal face
\begin{equation}
\label{eq:dcp-surfacegrad}
\nab_{\top} (\vn\cdot \bphi)= (\vn\cdot \nab)\bphit + \vn \cross (\curlt \bphi),
\end{equation}
where $\nab_{\top}$ is defined as $\nab_{\top} u = (\nab 
u)_{\top}$. It follows from~\eqref{eq:dcp-surfacevector}, 
~\eqref{eq:dcp-surfacegrad}, ~\eqref{eq:dcp-vector}, homogeneous boundary 
condition, and identity~\eqref{eq:dcp-jump} that:
\[
\begin{aligned}
B &= \sum_{j=1}^m\int_{\p\Om_j} \a_j\Big( \nab_{\top} (\vn\cdot \bphi) 
- (\divv \bphit)\vn \Big) \cdot \bphi \,dS
\\[1mm]
&= \sum_{j=1}^m\int_{\p\Om_j} \a_j\Big( \nab_{\top} (\vn\cdot \bphi) \cdot 
\bphit
- (\divv \bphit)(\bphi\cdot \vn) \Big)  \,dS
\\[1mm]
&= \sum_{F\subset \fI} \int_F \Big( \jump{\nab_{\top} 
(\a\bphi\cdot\vn)\cdot \bphit }{F}- 
\jump{(\divv \bphit)(\a\bphi\cdot \vn)}{F}\Big)\,dS
\\[1mm]
&= \sum_{F\subset \fI} \int_F\Big( \nab_{\top}(\jump{\a\bphi\cdot 
\vn}{F})\cdot \bphit 
+\divv\jump{\bphit}{F}(\a\bphi\cdot\vn)
\\[1mm]
&\hspace{0.6in} + \nab_{\top}(\a\bphi\cdot 
\vn)\cdot \jump{\bphit}{F} +
\divv \bphit \jump{\a\bphi\cdot\vn}{F} \Big)\,dS .
\end{aligned}
\]
Now $B=0$ is a direct consequence of the continuity conditions for $\bphi \in 
\vX(\Om,\a) \cap \vPCinf$:
\[
\jump{\bphi\cross \vn}{F} = \vect{0} \text{ and } \jump{\a\bphi\cdot\vn}{F} = 
0 
\quad 
\forall F\subset \fI.
\]
This completes the proof of the lemma.
\end{proof}

\begin{remark}
Lemma \ref{lem:normeqiv} is an extension to the Lemma 3.8 in 
\cite{Girault-Raviart} for Lipschitz polyhedron in the case when only 
homogeneous tangential boundary condition is satisfied for the vector field. It 
uses a similar argument to that of the Theorem 2.3 in 
\cite{Costabel-Dauge-eigenvalue}. In \cite{Costabel-Dauge-eigenvalue}, no 
piecewise constant coefficients are involved, but the technique used shed light 
upon this kind of identity. The result in Lemma \ref{lem:normeqiv} bears the 
same form with an identity valid for $\vPH{2}$ regular vector fields used in 
Lemma 2.2 in \cite{Costabel-Dauge-Nicaise}. In the proof of Lemma 
\ref{lem:normeqiv}, we further exploit the density result in 
\cite{Costabel-Dauge-Nicaise}, which implies this identity in 
\cite{Costabel-Dauge-Nicaise} Lemma 2.2 holds for $\vPH{1}$ regular 
vector fields when the jump conditions are met on the interfaces.
\end{remark}

\begin{theorem}[Weighted Helmholtz decomposition]
\label{thm:dcp-general}
Under Assumption~{\em \ref{asp:coeff}}, for any $\vv\in \vHcrlz$, there 
exist $\psi \in \Hoz$ and $\vw \in \vPH{1}\cap \vX(\Om,\b)$ such that the
decomposition \eqref{eq:dcp} holds, and the estimate 
\eqref{eq:dcp-estimate} is true.
\end{theorem}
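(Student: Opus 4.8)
The plan is to realize the decomposition \eqref{eq:dcp} as the $\b$-weighted Helmholtz projection anticipated in the discussion following Lemma~\ref{lem:curlfreepotential}. Given $\vv\in\vHcrlz$, I would first define $\psi\in\Hoz$ as the unique solution of the weighted elliptic problem $(\b\nab\psi,\nab\varphi)=(\b\vv,\nab\varphi)$ for all $\varphi\in\Hoz$, whose well-posedness follows from the Lax--Milgram lemma since $\b$ is uniformly positive. Setting $\vw:=\vv-\nab\psi$, the defining relation says exactly that $\vw$ is $\b$-orthogonal to every gradient, i.e. $(\b\vw,\nab\varphi)=0$ for all $\varphi\in\Hoz$; hence $\divv(\b\vw)=0$ in the distributional sense, so $\vw\in\vHdivw{\b}$. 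Since $\nab\psi\in\vHcrlzz\subset\vHcrlz$ by Lemma~\ref{lem:curlfreepotential}, we also have $\vw\in\vHcrlz$, and therefore $\vw\in\vX(\Om,\b)$ with the crucial identities $\curlt\vw=\curlt\vv$ and $\divv(\b\vw)=0$.

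The two $L^2$-type terms in \eqref{eq:dcp-estimate} are then immediate. Testing the defining equation with $\varphi=\psi$ gives $\norm{\b^{1/2}\nab\psi}_{\vLt}^2=(\b\vv,\nab\psi)\le\norm{\b^{1/2}\vv}_{\vLt}\norm{\b^{1/2}\nab\psi}_{\vLt}$, so $\norm{\b^{1/2}\nab\psi}_{\vLt}\le\norm{\b^{1/2}\vv}_{\vLt}\le\enorm{\vv}$. Because $\vw$ is the $\b$-orthogonal projection error, the Pythagorean relation $\norm{\b^{1/2}\vw}_{\vLt}^2=\norm{\b^{1/2}\vv}_{\vLt}^2-\norm{\b^{1/2}\nab\psi}_{\vLt}^2$ gives $\norm{\b^{1/2}\vw}_{\vLt}\le\norm{\b^{1/2}\vv}_{\vLt}\le\enorm{\vv}$ as well. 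This controls the second and third summands of \eqref{eq:dcp-estimate} with constant one.

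The substance of the proof is the first summand, the piecewise gradient term, and this is where Assumption~\ref{asp:coeff} enters. First I would invoke the piecewise elliptic regularity theory for the curl--div interface system (the Costabel--Dauge--Nicaise results already used for Lemma~\ref{lem:normeqiv}): under the geometric hypotheses in Assumption~\ref{asp:coeff}(i)---$\Om$ convex and simply connected, with no edge shared by three or more subdomains---the field $\vw\in\vX(\Om,\b)$, which satisfies $\curlt\vw\in\vLt$, $\divv(\b\vw)=0$, $\vw\cross\vn=\bm{0}$ on $\p\Om$, together with the interface conditions $\jump{\vw\cross\vn}{F}=\bm{0}$ and $\jump{\b\vw\cdot\vn}{F}=0$, enjoys the regularity $\vw\in\vPH{1}$. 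With this regularity secured, Lemma~\ref{lem:normeqiv} applied with $\a=\b$, together with $\divv(\b\vw)=0$ and $\curlt\vw=\curlt\vv$, yields the identity $\sum_{j=1}^m\int_{\Om_j}\b\,|\nab\vw|^2=\int_\Om\b\,|\curlt\vw|^2=\norm{\b^{1/2}\curlt\vv}_{\vLt}^2$.

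It remains to convert the $\b$-weighting into the $\mu^{-1}$-weighting demanded by \eqref{eq:dcp-estimate}, and this is exactly what Assumption~\ref{asp:coeff}(ii) provides through $C_{\min}\le\mu_j\b_j\le C_{\max}$, equivalently $\b_j/C_{\max}\le\mu_j^{-1}\le\b_j/C_{\min}$ on each $\Om_j$. Estimating $\mu_j^{-1}\le C_{\min}^{-1}\b_j$ on the left and $\b_j\le C_{\max}\mu_j^{-1}$ on the right gives $\sum_{j=1}^m\norm{\mu^{-1/2}\nab\vw}_{\vL^2(\Om_j)}^2\le C_{\min}^{-1}\norm{\b^{1/2}\curlt\vv}_{\vLt}^2\le C_{\min}^{-1}C_{\max}\norm{\mu^{-1/2}\curlt\vv}_{\vLt}^2\le C_{\min}^{-1}C_{\max}\enorm{\vv}^2$. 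A final Cauchy--Schwarz over the $m$ subdomains turns the sum of squared norms into the sum of norms in \eqref{eq:dcp-estimate}, giving the first summand a constant of the form $\sqrt{m\,C_{\max}/C_{\min}}$, which depends only on the product bounds and the number of subdomains, not on the individual jump ratios; assembling the three bounds proves \eqref{eq:dcp-estimate}. I expect the regularity step $\vw\in\vPH{1}$ to be the main obstacle: the weighted projection and the algebraic weight-swapping are routine, whereas the $\vPH{1}$ shift is precisely where convexity and the ``no triple-subdomain edge'' condition are indispensable, since a reentrant corner of $\Om$ or an edge meeting three materials would in general destroy full piecewise $H^1$ regularity and force one to settle for $\vPH{s}$ with $s<1$.
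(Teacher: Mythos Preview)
Your approach is essentially the paper's: both define $\psi$ via the $\b$-weighted projection, set $\vw=\vv-\nab\psi$, apply Lemma~\ref{lem:normeqiv} with $\a=\b$ to the resulting $\vw$ (using $\divv(\b\vw)=0$ and $\curlt\vw=\curlt\vv$), and then convert the $\b$-weight to the $\mu^{-1}$-weight through Assumption~\ref{asp:coeff}(ii). The only substantive difference is how the regularity $\vw\in\vPH{1}$ is justified. You black-box this as a direct vector regularity citation; the paper instead gives an explicit two-step reduction to scalar regularity. First it constructs, via \cite{Amrouche-Bernardi-Dauge-Girault}, a lifting $\vw_0\in\vX(\Om,1)$ with $\curlt\vw_0=\curlt\vw$, $\divv\vw_0=0$, $\vw_0\cross\vn=\bm{0}$, and uses convexity of $\Om$ through the embedding $\vX(\Om,1)\hookrightarrow\vHs{1}{\Om}$ to obtain $\vw_0\in\vHs{1}{\Om}$. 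Then $\vw-\vw_0=\nab\z$ for a scalar $\z$ that solves a $\b$-weighted transmission problem whose only data are the interface jumps $\jump{\b\vw_0\cdot\vn}{F}\in H^{1/2}(F)$, and the scalar regularity Theorem~4.1 of \cite{Costabel-Dauge-Nicaise}, which is where the ``no three subdomains share an edge'' hypothesis is actually consumed, yields $\z\in\PHs{2}$ and hence $\vw=\vw_0+\nab\z\in\vPH{1}$. This construction pinpoints exactly where each geometric hypothesis of Assumption~\ref{asp:coeff}(i) enters: convexity for $\vw_0\in\vHs{1}{\Om}$, the edge condition for $\z\in\PHs{2}$. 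One small misattribution in your write-up: the Costabel--Dauge--Nicaise result behind Lemma~\ref{lem:normeqiv} is the density statement of Lemma~\ref{lem:density}, not a regularity theorem; the vector regularity you want is not cited directly anywhere in the paper but is manufactured through the scalar route above.
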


\begin{proof}
For any $\vv\in \vHcrlz$, let $\psi\in \Hoz$ be the solution of
\[
\binprod{\b \nab \psi}{\nab \phi} = \binprod{\b \vv}{\nab \phi},\quad \forall 
\phi \in \Hoz.
\]
It is easy to check that
\begin{equation}
\label{eq:dcp-gradient}
\norm{\b^{1/2}\nab \psi}_{\vLt} \leq \norm{\b^{1/2}\vv}_{\vLt} \leq 
\enorm{\vv}
\end{equation}
and that $\vw = \vv - \nab \psi$ satisfies
\begin{equation}
\label{eq:dcp-divfree}
\divv(\b \vw)= 0 \text{ in } \Om \quad\text{ and }\quad 
\vw\cross\vn = \vect{0} \text{ on } \p \Om.
\end{equation}
The decomposition $\vv = \vw+ \nab \psi$ shares the same form of the result 
\eqref{eq:dcp} we want to prove, yet the rest is to show that $\vw \in 
\vPH{1}$. 
To this end, we first construct an $\vH^1$-lifting of the $\vw$. Using 
integration by parts we have
\[
-\int_{\p\Om} \nab \phi\cdot (\vw\cross\vn)\,dS 
= \int_{\Om} \nab\phi\cdot \curlt \vw
= \int_{\p \Om} \phi\curlt \vw\cdot \vn\,dS \quad \forall \phi\in \Ho,
\]
thus $\vw\cross\vn = \vect{0}$ on $\p \Om$ implies $\curlt \vw\cdot \vn = 0$ on 
$\p \Om$ by a density argument (e.g. see \cite{Amrouche-Bernardi-Dauge-Girault}). 
Applying Theorem 3.17 in \cite{Amrouche-Bernardi-Dauge-Girault} on $\curlt \vw$,  
there exists a $\vw_0\in \vX(\Om,1)$ such that
\[
\left\{
\begin{aligned}
\curlt \vw_0 &= \curlt \vw  &\text{in }\Om,
\\
\divv\vw_0 &= 0  &\text{in }\Om,
\\
\vw_0\cross \vn & = \vect{0}  &\text{on }\p\Om.
\end{aligned}
\right.
\]
Taking the convexity of the $\Om$ into account, an embedding result from 
Theorem 2.17 in \cite{Amrouche-Bernardi-Dauge-Girault} reads that 
$\vX(\Om,1)\hookrightarrow \vH^1(\Om)$. Thus $\vw_0\in  \vH^1(\Om)$.
Obviously,
\[
\curlt(\vw-\vw_0)= \vect{0}\text{ in } \Om  \quad\text{ and }\quad
(\vw-\vw_0)\cross\vn = \vect{0}\text{ on }\p \Om.
\]
The simply-connectedness of $\Om$ implies that there exists a $\z \in \Ho$ (see 
Lemma \ref{lem:curlfreepotential}) with a constant boundary value such that
\[
\vw - \vw_0 = \nab \z \text{ in } \Om.
\]
By the fact that $\vw\at{\Om_j}$ is divergence free within each 
$\Om_j$ respectively, and $\jump{\b\vw\cdot \vn}{F} = 0$ for any $F\subset \fI$, 
one can check that the variational problem that $\z$ satisfies is
\[
\binprod{\b \nab \z}{\nab \phi} = -\sum_{F\subset \fI} \int_F\jump{\b\vw_0\cdot 
\vn}{F}\phi\,dS \quad \forall \phi\in \Hoz.
\]
Noticing 
$\jump{\b\vw_0\cdot\vn}{F} = \jump{\b}{F} (\vw_0\cdot\vn)\at{F}\in H^{1/2}(F)$ on 
any $F\subset \fI$, the regularity result of Theorem 4.1 in 
\cite{Costabel-Dauge-Nicaise} shows that $\z\in \PHs{2}$, in which a function is 
piecewisely $H^2$ smooth, while has $H^1$ regularity across the 
interfaces on the whole domain. This, in turn, implies that 
\[
\vw = \vw_0+ \nab \z \in \vPH{1}\cap\{\vv\in \vX(\Om,\b): \divv(\b \vv ) = 0 \}.
\]
Lastly, to prove the estimate, by the triangle inequality 
and~\eqref{eq:dcp-gradient}, we have
\[
\norm{\b^{1/2} \vw}_{\vLt}\leq \norm{\b^{1/2} \vv}_{\vLt} 
+ \norm{\b^{1/2} \nab \psi}_{\vLt} \leq C\enorm{\vv}.
\] 
It follows from Assumption~\ref{asp:coeff} (ii) and 
Lemma~\ref{lem:normeqiv}
that
\[
\sum^m_{j=1}\norm{\mu^{-1/2}\nab \vw}_{\vL^2(\Om_j)} \leq C 
\sum^m_{j=1}\norm{\b^{1/2}\nab \vw}_{\vL^2(\Om_j)} = C \norm{\b^{1/2}\curlt 
\vw}_{\vL^2(\Om)} \leq  C \enorm{\vv}.
\]
These inequalities and~\eqref{eq:dcp-gradient} imply the validity 
of~\eqref{eq:dcp-estimate} and, hence, it completes the proof of the 
theorem. 
\end{proof}

\begin{remark}
The decomposition result in Theorem {\em \ref{thm:dcp-general}} resembles 
that of Theorem 3.5 in \cite{Costabel-Dauge-Nicaise}: any vector field in 
$\vX(\Om,\b)$ can be split into a $\vPH{1}$-regular part, and a singular part 
solving a Dirichlet boundary problem $-\divv(\b\nab \psi) = f\in \Lt$. In the 
proof of Theorem {\em \ref{thm:dcp-general}}, we refine the results to cater 
the need for the pipeline of proving the reliability of the error estimator. 
Namely, when certain assumption of geometry is imposed, if a vector field 
$\vv\in \vX(\Om,\b)$ with its tangential trace vanishing on the boundary, and 
$\divv(\b \vv) = 0$, that singular part is non-existent.
\end{remark}



\begin{thebibliography}{}
\bibitem{Amrouche-Bernardi-Dauge-Girault}
{\sc C. Amrouche, C. Bernardi, M. Dauge, and V. Girault}, {\em Vector 
Potentials in Three-dimensional Non-smooth Domains}, 
Mathematical Methods in the Applied Sciences, 21-9(1998), pp.~823--864.

\bibitem{Balanis}
{\sc C. A. Balanis}, {\em Advanced Engineering Electromagnetics}, 
John Wiley \& Sons, Publishers, Inc.,  1989.

\bibitem{Beck-Hiptmair-Hoppe-Wohlmuth}
{\sc R. Beck, R. Hiptmair, R. W. Hoppe, and B. Wohlmuth}, {\em Residual 
Based A Posteriori Error Estimators For Eddy Current
Computation}, Math. Model. Numer. Anal., 34(2000), pp.~159--182.

\bibitem{Bernardi-Verfurth}
{\sc C. Bernardi and R. Verf\"{u}rth}, {\em Adaptive finite element methods 
for elliptic equations with non-smooth coefficient}, Numer. Math., 
85-4(2000), pp.~579--608.

\bibitem{Braess06}
{\sc D. Braess and J. Sch\"{o}berl}, {\em Equilibrated residual error 
estimator for edge elements}, Math. Comp., 77(2008), 
pp.~651--672.

\bibitem{Brezzi-Fortin} {\sc F. Brezzi and M. Fortin}, {\em Mixed and Hybrid 
Finite Element Methods}, Springer, 1991.

\bibitem{Cai09}
{\sc Z. Cai and S. Zhang}, {\em Recovery-based error estimators for interface 
problems: conforming linear elements}, SIAM J. Numer. Anal., 47-3(2009), 
pp.~2132--2156.

\bibitem{Cai10}
{\sc Z. Cai and S. Zhang}, {\em Recovery-based error estimators for interface 
problems: Mixed and nonconforming finite elements}, SIAM J. Numer. Anal., 48 
(2010), 
pp. 30--52.


\bibitem{Carstensen-Verfurth}
{\sc C. Carstensen and R. Verf\"{u}rth}, {\em Edge residuals dominate a 
posteriori error estimates for low order finite element methods
}, SIAM J. Numer. Anal., 36(1999), pp.~1571--1587.

\bibitem{Chen.L2008c}
{\sc L. Chen}, {\em {$i$FEM}: an innovative finite element methods package in 
MATLAB}, preprint, (2008).

\bibitem{Chen-Xu-Zou-1}
{\sc J. Chen, Y.Xu, and J.Zou}, {\em An adaptive edge element method and its 
convergence for a saddle-point problem from magnetostatics}, 
Numer. Methods PDEs, 28(2012), pp.~1643--1666.

\bibitem{Chen-Xu-Zou-2}
{\sc J. Chen, Y.Xu, and J.Zou}, {\em Convergence analysis of an adaptive edge 
element method for Maxwell's equations}, 
Appl. Numer. Math., 59(2009), pp.~2950--2969.


\bibitem{Ciarlet} {\sc P.G. Ciarlet}, {\em The Finite Element Method for Elliptic 
Problems}, North-Holland, Amsterdam, 1978.

\bibitem{Nicaise07}
{\sc S. Cochez-Dhondt and S. Nicaise}, {\em Robust a posteriori error 
estimation for the Maxwell equations}, Comput. Methods Appl. Mech. Engrg., 
196(2007), pp.~2583--2595.

\bibitem{Costabel-Dauge}
{\sc M. Costabel and M. Dauge}, {\em Singularities of Electromagnetic Fields 
in Polyhedral Domains}, Arch. Rational Mech. Anal., 151-3(1997), pp.~221--276.

\bibitem{Costabel-Dauge-eigenvalue}
{\sc M. Costabel and M. Dauge}, {\em Maxwell and Lam\'{e} Eigenvalues on 
Polyhedra}, 
Math. Meth. Appl. Sci., 22(1999), pp.~243--258

\bibitem{Costabel-Dauge-Nicaise}
{\sc M. Costabel, M. Dauge, and S. Nicaise}, {\em Singularities of Maxwell 
interface problems}, Math. Model. Numer. Anal., 33(1998), pp.~627--649.

\bibitem{Nicaise03}
{\sc E. Creus\'{e} and S. Nicaise}, {\em A posteriori error estimation for the 
heterogeneous Maxwell equations on isotropic and anisotropic meshes}, Calcolo, 
40-4 (2003), pp.~249--271.

\bibitem{Dautray-Lions} {\sc R. Dautray and J.-L. Lions}, {\em Mathematical 
Analysis and Numerical Methods for Science and Technology: 
  Volume 3 Spectral Theory and Applications}, Springer,  2000.

\bibitem{Fernandes-Gilardi}
{\sc P. Fernandes and G. Gilardi}, {\em Magnetostatic and Electrostatic 
Problems in Inhomogeneous Anisotropic 
Media with Irregular Boundary and Mixed Boundary Conditions}, Mathematical 
Models and Methods in Applied Sciences, 
7-7(1997), pp.~957--991.

\bibitem{Girault-Raviart} {\sc V. Girault and P.-A. Raviart}, {\em Finite 
Element Methods for Navier-Stokes Equations: Theory and 
Algorithms}, Springer-Verlag,  1986.

\bibitem{Harutyunyan08}
{\sc F. Izs\'{a}k, D. Harutyunyan, and J. J. W. van der Vegt}, {\em Implicit 
a posteriori error estimates for the {M}axwell equations}, Math. Comp., 
77(2008), pp.~1355--1386.

\bibitem{Hiptmair99}
{\sc R. Hiptmair}, {\em Multigrid Method for Maxwell's Equations}, SIAM J. 
Numer. Anal., 36-1(1999), pp.~204--225.

\bibitem{Hiptmair-Li-Zou}
{\sc R. Hiptmair, J. Li, and J. Zou}, {\em Convergence analysis of finite element 
methods for ${H}(\mathbf{curl};\Omega)$-elliptic interface problems}, Numer. 
Math., 122-3(2012), pp.~557--578.

\bibitem{Hu-Shu-Zou}
{\sc Q. Hu, S.Shu, and J.Zou}, {\em A discrete weighted Helmholtz decomposition 
and its application}, 
Numer. Math., 125(2013), pp.~153--189.

\bibitem{Kellogg74}
{\sc R. B. Kellogg}, {\em On the Poisson equation with intersecting interfaces}, 
Applicable Analysis: An International Journal, 4-2(1974), pp.~101--129.

\bibitem{Monk} {\sc P. Monk}, {\em Finite Element Methods for Maxwell's 
Equations}, Oxford University Press,  2003.

\bibitem{Nedelec80}
{\sc J.-C. N\'{e}d\'{e}lec}, {\em Mixed finite elements in $\mathbb{R}^3$}, 
Numer. Math., 35(1980), pp.~315--341.


\bibitem{Nicaise05}
{\sc S. Nicaise}, {\em On Zienkiewicz-Zhu error estimators for Maxwell's 
equations}, C. R. Acad. Sci., Paris, S\'{e}r. I, 
340 (2005), pp.~697--702.

\bibitem{Persson04}
{\sc P.-O. Persson and G. Strang}, {\em A simple mesh generator in MATLAB}, SIAM 
review, 46-2 (2004), pp.~329--345.

\bibitem{Petzoldt02}
{\sc M. Petzoldt}, {\em A posteriori error estimators for elliptic equations 
with discontinuous coefficients}, Advances in Computational Mathematics, 
16(2002), pp.~47--75.

\bibitem{Schoberl07}
{\sc J. Sch\"{o}berl}, {\em A Posteriori Error Estimates for 
Maxwell Equations}, Math. Comp., 77(2008), pp.~633--649.


\bibitem{Solin-Segeth-Dolezel} {\sc P. \v{S}ol\'{i}n, K. Segeth, and I. 
Dole\v{z}el}, {\em Higher-order Finite Element Methods}, 
CRC Press,  2004.

\bibitem{Verfurth99}
{\sc R. Verf\"{u}rth}, {\em Error estimates for some quasi-interpolation 
operators}, 
Mod\'{e}l. Math. et Anal. Num\'{e}r, 33(1999), pp.~695--713.

\bibitem{Whitney} {\sc H. Whitney}, {\em Geometric Integration Theory}, 
Princeton University Press,  1957.

\bibitem{Xu91}
{\sc J. Xu}, {\em Counterexamples concerning a weighted $L^2$ projection}, 
Math. Comp., 57(1991), pp.~563--568.

\bibitem{Xu04}
{\sc J. Xu and Z. Zhang}, {\em Analysis of recovery type a posteriori error 
estimators for mildly structured grids}, 
Math. Comp., 73-247(2004), pp.~1139--1152.

\bibitem{Zhu11}
{\sc J. Xu and Y. Zhu}, {\em Robust Preconditioner for $H(curl)$ Interface 
Problems.}, 
Domain Decomposition Methods in Science and Engineering XIX.,  78(2011), 
pp.~173--180.

\bibitem{Chen10}
{\sc L. Zhong and S. Shu and L. Chen and J. Xu}, {\em Convergence of adaptive 
edge finite element methods for ${H}(\mathbf{curl})$-elliptic problems}, 
Numerical Linear Algebra with Applications, 17(2010), pp.~415--432.

\bibitem{ZZ87}
{\sc O. C. Zienkiewicz and J. Zhu}, {\em A simple error estimator and adaptive 
procedure for practical engineerng analysis}, 
International Journal for Numerical Methods in Engineering, 57(1987), pp.~337--357.

\end{thebibliography}
\end{document}